\newtheorem{theorem}{Theorem}[section]
\newtheorem{lemma}[theorem]{Lemma}
\newtheorem{proposition}[theorem]{Proposition}
\newtheorem{corollary}[theorem]{Corollary}
\theoremstyle{definition}
\newtheorem{remark}[theorem]{Remark}
\begin{document}

\title{Cubic approximation to Sturmian continued fractions}

\author{Johannes Schleischitz} 

\thanks{Research supported by the Schr\"odinger scholarship J 3824 of the Austrian Science Fund FWF.\\
Department of Mathematics and Statistics, University of Ottawa, Canada  \\ 
johannes.schleischitz@univie.ac.at}

\begin{abstract}
We determine the classical exponents of approximation $w_{3}(\zeta),w_{3}^{\ast}(\zeta),\lambda_{3}(\zeta)$
and $\widehat{w}_{3}(\zeta),\widehat{w}_{3}^{\ast}(\zeta),\widehat{\lambda}_{3}(\zeta)$ 
associated to real numbers $\zeta$ whose continued fraction expansions are given by a Sturmian word.
We more generally provide a description
of the combined graph of the parametric successive minima functions defined 
by Schmidt and Summerer  in dimension three for such Sturmian continued fractions. 
This both complements similar results due to Bugeaud and Laurent concerning the two-dimensional exponents 
and generalizes a recent result of the author. As a side result we obtain new information 
on the spectra of the above exponents. Moreover, we provide some information on the exponents
$\lambda_{n}(\zeta)$ for a Sturmian continued fraction $\zeta$ and arbitrary $n$.
\end{abstract}

\maketitle

{\footnotesize{
{\em Keywords}: geometry of numbers, continued fractions, Sturmian words \\
{\em Math Subject Classification 2010}: 11H06, 11J13, 11J70}}

\vspace{4mm}

\section{Exponents of approximation and Sturmian continued fractions} \label{sek1}
We start with the definition of classical exponents of Diophantine approximation. 
Let $\zeta$ be a real transcendental number and $n\geq 1$ be an integer.
For a polynomial $P(T)=a_{n}T^{n}+a_{n-1}T^{n-1}+\cdots+a_{0}$, as usual denote by
$H(P)=\max_{0\leq j\leq n} \vert a_{j}\vert$ its naive height.
Let $w_{n}(\zeta)$ and $\widehat{w}_{n}(\zeta)$ be the supremum of  
$w\in{\mathbb{R}}$ such that the system 
\begin{equation}  \label{eq:w}
H(P) \leq X, \qquad  0<\vert P(\zeta)\vert \leq X^{-w},  
\end{equation}
has a non-zero polynomial solution 
$P(T)\in\mathbb{Z}[T]$ of degree at most $n$
for arbitrarily large $X$, and all large values of $X$, respectively. 
For fixed $\zeta$, the sequences $(w_{n}(\zeta))_{n\geq 1}$ and 
$(\widehat{w}_{n}(\zeta))_{n\geq 1}$
are obviously non-decreasing as $n$ increases.
Dirichlet's Theorem implies the lower bounds
\begin{equation} \label{eq:wmono}
w_{n}(\zeta)\geq \widehat{w}_{n}(\zeta)\geq n.
\end{equation}
Following Bugeaud and Laurent~\cite{buglau}, we define the exponents of 
simultaneous approximation $\lambda_{n}(\zeta)$ and $\widehat{\lambda}_{n}(\zeta)$ as
the supremum of $\lambda\in{\mathbb{R}}$ such that the system 
\begin{equation}  \label{eq:lambda}
1\leq \vert x\vert \leq X, \qquad \max_{1\leq i\leq n} \vert \zeta^{i}x-y_{i}\vert \leq X^{-\lambda},  
\end{equation}
has a solution $(x,y_{1},y_{2},\ldots, y_{n})\in{\mathbb{Z}^{n+1}}$ 
for arbitrarily large values of $X$, and all large $X$, respectively.
In contrast to the polynomial exponents, $(\lambda_{n}(\zeta))_{n\geq 1}$ and $(\widehat{\lambda}_{n}(\zeta))_{n\geq 1}$ 
clearly form non-increasing sequences, as the number of estimates that need to be satisfied in \eqref{eq:lambda}
grows with $n$. 
Another version of Dirichlet's Theorem yields
\begin{equation} \label{eq:ldiri}
\lambda_{n}(\zeta)\geq \widehat{\lambda}_{n}(\zeta)\geq \frac{1}{n},
\end{equation}
 for all $n\geq 1$ and transcendental real $\zeta$.
Khintchine~\cite{khintchine} established the connection
\begin{equation} \label{eq:khintchine}
\frac{w_{n}(\zeta)}{(n-1)w_{n}(\zeta)+n}\leq \lambda_{n}(\zeta)\leq \frac{w_{n}(\zeta)-n+1}{n}.
\end{equation}
Finally we define the quantities $w_{n}^{\ast}(\zeta)$ and $\widehat{w}_{n}^{\ast}(\zeta)$.
For $\alpha$ an algebraic number, we put $H(\alpha)=H(P)$ where
$P$ is its (up to sign) unique
minimal polynomial $P\in\mathbb{Z}[T]$ with coprime
coefficients, and call $H(\alpha)$ the height of $\alpha$.  
Then $w_{n}^{\ast}(\zeta)$ and $\widehat{w}_{n}^{\ast}(\zeta)$ are given as the 
supremum of real $w^{\ast}$ such that 
\[
H(\alpha)\leq X, \qquad 0<\vert \alpha-\zeta\vert\leq H(\alpha)^{-1}X^{-w^{\ast}}
\]
has an algebraic real solution $\alpha$ of degree at most $n$ for arbitrarily large $X$, and all large $X$, respectively. 
For $n=1$, it is not hard to see that $w_{1}(\zeta)=\lambda_{1}(\zeta)=w_{1}^{\ast}(\zeta)$ 
and $\widehat{w}_{1}(\zeta)=\widehat{\lambda}_{1}(\zeta)=\widehat{w}_{1}^{\ast}(\zeta)=1$. 
Clearly again the definition of the exponents directly imply that the sequences
$(w_{n}^{\ast}(\zeta))_{n\geq 1}$ and $(\widehat{w}_{n}^{\ast}(\zeta))_{n\geq 1}$ are non-decreasing.
It is further known that
\begin{equation} \label{eq:sternug}
w_{n}^{\ast}(\zeta)\leq w_{n}(\zeta)\leq w_{n}^{\ast}(\zeta)+n-1, 
\qquad \widehat{w}_{n}^{\ast}(\zeta)\leq \widehat{w}_{n}(\zeta)\leq \widehat{w}_{n}^{\ast}(\zeta)+n-1
\end{equation}
hold for all integers $n\geq 1$ and any transcendental real $\zeta$, 
see~\cite[Lemma~A.8]{bugbuch}. Roughly speaking,
for generic numbers $\zeta$ we expect
equalities in the respective left inequalities of \eqref{eq:sternug}.
Otherwise, in case of strict inequality, for
all polynomials with very small absolute values of evaluations at $\zeta$ (inducing $w_{n}(\zeta)$)
the derivatives at $\zeta$ are untypically small by absolute value. On the other hand, it is known that
$w_{n}(\zeta)-w_{n}^{\ast}(\zeta)$ takes any value in the interval 
$[0,n/2+(n-2)/(4(n-1))]$ for some real 
transcendental $\zeta$, and for $n\in\{2,3\}$ 
any value in the allowed interval $[0,n-1]$, see~\cite{bugduj}.
We will discuss certain spectra for $n=3$
in Section~\ref{spectra}.

We follow the definition of Sturmian words in~\cite[Section~3]{buglau} and enclose
certain properties that are of importance for us. 
Let $(s_{k})_{k\geq 1}$ be a sequence of positive integers and for fixed distinct positive integers $a,b$
consider the words recursively defined by
\begin{equation} \label{eq:m}
m_{0}=b,\quad m_{1}=b^{s_{1}-1}a, \quad m_{k+1}=m_{k}^{s_{k+1}}m_{k-1}, \qquad\qquad k\geq 1.
\end{equation}
Let 
\begin{equation} \label{eq:s}
\varphi=[0;s_{1},s_{2},s_{3},\ldots], \qquad 
m_{\varphi}= \lim_{k\to\infty} m_{k}=b^{s_{1}-1}a\cdots.
\end{equation}
Denote by $\zeta_{\varphi}=[0;m_{\varphi}]$ the number whose continued fraction
expansion is given by concatenation of $0$ and $m_{\varphi}$ and let 
\begin{equation} \label{eq:sigmarder}
\sigma_{\varphi}= \liminf_{k\to\infty} [0;s_{k},s_{k-1},\ldots,s_{1}]=
\frac{1}{\limsup_{k\to\infty}[s_{k};s_{k-1},\ldots,s_{1}]}.
\end{equation}
Throughout the paper let  
\[
\gamma=\frac{1+\sqrt{5}}{2}\approx 1.6180
\]
be the golden ratio. We shall also implicitly consider fixed distinct positive integers $a,b$ 
whenever words $\varphi$ are involved in accordance with the above definitions. As in~\cite{buglau}
denote by $\mathcal{S}$ the set of values $\sigma$ arising in this way,
that is the set of all $\sigma_{\varphi}$ that arises
by all sequences $(s_{k})_{k\geq 1}$ of positive integers via
\eqref{eq:sigmarder}.
It is not hard to see that $\mathcal{S}$ forms a subset of $[0,\gamma^{-1}]$.
The set $\mathcal{S}$ has countable intersection with interval $[s,\gamma^{-1}]$, where $s\approx 0.3867$
is the largest accumulation point of $\mathcal{S}$, but as shown
in~\cite[Theorem~8.2]{buglau} for any $s^{\prime}<s$ the
interval $[s^{\prime},s]$ has uncountable intersection with $\mathcal{S}$. 
On the other hand, as pointed out in the Remark on page~27 in~\cite{buglau}, any interval contained
in $[0,s]$ contains a subinterval which has empty intersection with $\mathcal{S}$.    
For more information on the set $\mathcal{S}$ and the continued fraction expansion
of the value $s$ see~\cite[Section~8]{buglau} and also Cassaigne~\cite{cassaigne}
and Fischler~\cite{fisch1},~\cite{fisch2}. In the special
case of a constant sequence $(s_{k})_{k\geq 1}$ equal to one, the corresponding Sturmian continued fractions satisfy
$\sigma_{\varphi}=\gamma^{-1}=\gamma-1$ and the arising numbers $\zeta_{\varphi}=\zeta_{\gamma-1}$ 
provide examples of extremal numbers defined
by Roy~\cite{royyy}. Extremal numbers attain simultaneously the maximum possible values 
$\widehat{w}_{2}(\zeta)=\widehat{w}_{2}^{\ast}(\zeta)=\gamma+1$ 
and $\widehat{\lambda}_{2}(\zeta)=\gamma-1$ among real $\zeta$ which are not rational or quadratic irrational.
However, Roy~\cite[Theorem~2.4]{droy} proved
the existence of extremal numbers for which no birational equivalent number
has continued fraction expansion given by a Sturmian word (observe that birational
transformations $\zeta\to (a\zeta+b)/(c\zeta+d)$ for $a,b,c,d\in\mathbb{Q}$ with 
$ad-bc\neq 0$ do not affect the classical exponents).   

The classical exponents of Diophantine approximation of the Sturmian continued fractions 
above in dimension $n=2$ have been established by Bugeaud and Laurent~\cite{buglau}.
The central result of their paper is the following~\cite[Theorem~3.1]{buglau}.

\begin{theorem}[Bugeaud, Laurent]  \label{buglauthm}
Let $\zeta=\zeta_{\varphi}$
with corresponding $\sigma=\sigma_{\varphi}$ be as above. Then 
\begin{align*}
\lambda_{2}(\zeta)&= 1, \qquad\qquad w_{2}(\zeta)=w_{2}^{\ast}(\zeta)=1+\frac{2}{\sigma}   \\
\widehat{\lambda}_{2}(\zeta)&=\frac{1+\sigma}{2+\sigma}, 
\qquad \widehat{w}_{2}(\zeta)=\widehat{w}_{2}^{\ast}(\zeta)=2+\sigma.
\end{align*}
\end{theorem}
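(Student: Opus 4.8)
The plan is to extract, from the recursive structure \eqref{eq:m}, the explicit sequence of best rational approximations to $\zeta=\zeta_\varphi$ and then read off the four exponents from the growth rates of the corresponding denominators together with the ``return times'' at which the combinatorics of $m_\varphi$ essentially repeats. First I would recall the standard correspondence between the continued fraction $[0;m_\varphi]$ and its convergents: writing $m_\varphi=c_1c_2c_3\cdots$ with each $c_i\in\{a,b\}$, the convergents $p_j/q_j=[0;c_1,\ldots,c_j]$ satisfy $q_j=c_jq_{j-1}+q_{j-2}$, so $\log q_j$ is, up to bounded error, $\sum_{i\le j}\log(\text{something between }a\text{ and }b)$; in particular $\log q_j \asymp j$. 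The key point is that the block structure $m_{k+1}=m_k^{s_{k+1}}m_{k-1}$ forces, at the indices $j=|m_k|$, a palindromic/near-repetition phenomenon in the partial quotients, and this is exactly what produces large values of $w_2$ and $\widehat w_2$.

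Next I would make this quantitative. Let $q^{(k)}$ denote the denominator of the convergent of $\zeta$ truncated at the end of the prefix $m_k$, and let $\ell_k=|m_k|$, so $\ell_{k+1}=s_{k+1}\ell_k+\ell_{k-1}$ and hence $\ell_{k}/\ell_{k-1}\to$ a quantity governed by $\varphi=[0;s_1,s_2,\ldots]$; concretely $\liminf$ and $\limsup$ of consecutive ratios are controlled by the tails $[0;s_k,s_{k-1},\ldots]$, which is where $\sigma=\sigma_\varphi$ in \eqref{eq:sigmarder} enters. Because $m_{k+1}$ begins with $m_k$ repeated $s_{k+1}$ times, the word $m_\varphi$ agrees with a periodic word of period $\ell_k$ for a stretch of length about $s_{k+1}\ell_k=\ell_{k+1}-\ell_{k-1}$. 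Translating ``long agreement with a periodic continued fraction'' into approximation by the corresponding quadratic irrational, one gets a quadratic $\alpha_k$ with $H(\alpha_k)\asymp q^{(k)}$ (up to the exponential rate) and $|\zeta-\alpha_k|$ exponentially small of the right order; optimizing over $k$ and over how far into the repeated block one truncates yields $w_2^\ast(\zeta)$, and the equality $w_2=w_2^\ast$ will follow because these near-repetitions are genuinely ``quadratic'' rather than linear, so the corresponding minimal polynomials have non-small derivative at $\zeta$. The ``for all large $X$'' version — i.e.\ $\widehat w_2$ and $\widehat\lambda_2$ — requires showing there are no long gaps: between consecutive such quadratic approximations the sequence of ordinary convergents fills in, and the worst ratio of consecutive heights is again pinned down by $\sigma$. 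Finally $\lambda_2(\zeta)=1$ should come from the fact that the partial quotients of $\zeta$ are bounded (only the two letters $a,b$ occur), so $\zeta$ is badly approximable in the sense $w_1=\lambda_1=1$, and a transference/inequality argument (together with Khintchine's relation \eqref{eq:khintchine} applied with $n=2$ using the computed $w_2$, or a direct convergent estimate) forces $\lambda_2=1$; the lower bound $\widehat\lambda_2\ge(1+\sigma)/(2+\sigma)$ follows from Dirichlet-type pigeonholing within a repeated block and the matching upper bound from the transference identity $\widehat\lambda_2=\widehat w_2-1$ wait—more precisely from the known two-dimensional relation $\widehat\lambda_2(\zeta)\cdot(\text{something})$; I would use the Jarník-type identity $\widehat w_2(\zeta)\,\widehat\lambda_2(\zeta)=\widehat w_2(\zeta)-\widehat\lambda_2(\zeta)+\cdots$ — concretely, $\widehat\lambda_2=1-1/\widehat w_2$ would give $(1+\sigma)/(2+\sigma)$ from $\widehat w_2=2+\sigma$, so I would verify and invoke that identity.

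I expect the main obstacle to be the bookkeeping that converts ``the infinite word $m_\varphi$ contains, at scale $\ell_k$, a near-periodic block of length $\approx \ell_{k+1}$'' into sharp two-sided bounds on $|\zeta-\alpha_k|$ and on $q^{(k)}$, \emph{uniformly} in $k$, including the boundary contributions from the tail $m_{k-1}$ in $m_{k+1}=m_k^{s_{k+1}}m_{k-1}$; keeping the multiplicative errors bounded (so that they disappear after taking logarithms and ratios) is the delicate part, and it is precisely here that one must be careful because $a$ and $b$ are arbitrary distinct positive integers, so the per-letter contribution $\log q_j-\log q_{j-1}$ fluctuates and one cannot simply replace $\log q^{(k)}$ by $c\,\ell_k$. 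The liminf in the definition of $\sigma_\varphi$ is exactly what one needs for the ``for arbitrarily large $X$'' (non-hatted) exponents, while the ``for all large $X$'' (hatted) exponents are governed by the same $\sigma$ because the hatted definition asks about the gaps between consecutive good approximations, and the extremal such gap is realized along the same subsequence. A secondary technical point is justifying $w_2=w_2^\ast$ and $\widehat w_2=\widehat w_2^\ast$: by \eqref{eq:sternug} it suffices to bound $w_2-w_2^\ast$ from above, and one argues that any integer quadratic $P$ with $|P(\zeta)|$ very small must, because $\zeta$'s partial quotients are bounded, be (a multiple of) one of the $\alpha_k$-polynomials up to bounded height, whence $|P'(\zeta)|$ is not abnormally small; alternatively one invokes directly Roy's results on extremal numbers in the special case and a perturbation thereof. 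Once all exponents are matched against the claimed formulas the theorem follows; I would organize the write-up as (i) recursion for $\ell_k$ and $q^{(k)}$ and the role of $\sigma$, (ii) construction of the quadratic approximations and lower bounds for $w_2,\widehat w_2$, (iii) upper bounds via the theory of continued fractions with bounded partial quotients, (iv) deduction of $\lambda_2,\widehat\lambda_2$ by transference.
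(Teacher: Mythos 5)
The paper does not actually prove this theorem: it is imported verbatim from Bugeaud--Laurent \cite[Theorem~3.1]{buglau}, and the only ingredients the paper records are Lemma~\ref{laubug} (the two-sided estimates $\vert\zeta-\alpha_k\vert\asymp H(\alpha_k)^{-2-2\eta_k}$, $\vert W_k(\zeta)\vert\asymp H(W_{k+1})^{-2-1/\eta_k}$ for the quadratic irrationals $\alpha_k=[0;\overline{m_k}]$) together with the remark that the upper bounds follow from Liouville's inequality. Your outline for $w_2$, $\widehat w_2$, $w_2^\ast$, $\widehat w_2^\ast$ and $\widehat\lambda_2$ is in the same spirit as that source: the near-periodicity $m_{k+1}=m_k^{s_{k+1}}m_{k-1}$ produces exactly these $\alpha_k$, the $\liminf/\limsup$ bookkeeping through $\eta_k=[s_{k+1};s_k,\ldots,s_1]$ is where $\sigma$ enters, Liouville excludes better quadratic approximations, and the identity you grope for at the end is indeed Jarn\'ik's $\widehat\lambda_2=1-1/\widehat w_2$ (cited in the paper), which turns $\widehat w_2=2+\sigma$ into $\widehat\lambda_2=(1+\sigma)/(2+\sigma)$.

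There is, however, a genuine gap at $\lambda_2(\zeta)=1$. Bounded partial quotients only give the \emph{upper} bound $\lambda_2\le\lambda_1=w_1=1$, and Khintchine's transference \eqref{eq:khintchine} with $n=2$ and $w_2=1+2/\sigma$ only gives $\lambda_2\ge w_2/(w_2+2)=(\sigma+2)/(3\sigma+2)$, which is strictly less than $1$ whenever $\sigma>0$ (e.g.\ $\approx 0.679$ for extremal numbers). So neither of the mechanisms you name produces the lower bound $\lambda_2\ge 1$, and no transference from the polynomial exponents can: by \eqref{eq:debre} the statement $\lambda_2=1$ is equivalent to $\widehat w_{2,3}(\zeta)=1$, which is genuinely new information about the third minimum. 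The missing idea is the combinatorial symmetry of Sturmian prefixes: $m_\varphi$ has infinitely many palindromic prefixes with bounded ratios of consecutive lengths, a palindromic prefix $c_1\cdots c_j$ makes the convergent matrix symmetric (so $p_{j-1}=q_j$ up to the obvious normalization), and the resulting integer triples $(q_j,p_j,p_{j-1})$ satisfy $\vert q_j\zeta-p_j\vert\ll q_j^{-1}$ and $\vert q_j\zeta^2-p_{j-1}\vert\ll q_j^{-1}$ simultaneously; equivalently one can exploit integer points on the conics $W_k=0$. Without this (Fischler/Roy-type) palindrome argument the claim $\lambda_2=1$ is not established, and it is precisely the step that distinguishes Sturmian continued fractions from arbitrary badly approximable numbers with the same value of $w_2$.
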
 

The goal of this paper is to determine the exponents for $n=3$. 

\section{The main new results} \label{newr} 

\subsection{Exponents is dimension three}

The main result of the paper is the following.

\begin{theorem} \label{neu}
Let $\zeta=\zeta_{\varphi}$  be a Sturmian continued fraction with corresponding $\sigma=\sigma_{\varphi}$ as above. Then we have
\begin{equation} \label{eq:equ}
w_{3}(\zeta)=w_{3}^{\ast}(\zeta)=1+\frac{2}{\sigma}, \qquad \lambda_{3}(\zeta)= \frac{1}{1+2\sigma},
\end{equation}
and
\begin{equation}  \label{eq:glm}
\widehat{w}_{3}(\zeta)=3, \qquad \widehat{\lambda}_{3}(\zeta)=\frac{1}{3},
\end{equation}
and 
\begin{equation} \label{eq:wstern}
\widehat{w}_{3}^{\ast}(\zeta)= 2+\sigma.
\end{equation}
\end{theorem}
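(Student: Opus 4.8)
The plan is to exploit the combined graph of the Schmidt--Summerer successive minima functions $L_1(q)\le L_2(q)\le L_3(q)$ for the point $(1,\zeta,\zeta^2)$ that will be established in the course of the paper, since essentially all six exponents in Theorem~\ref{neu} are read off from the asymptotic slopes of the $L_i$ and their lower/upper limits. Concretely, I would reduce each assertion to a known dictionary between the exponents and these functions: $w_3$ governs the rate at which $L_1(q)\to-\infty$, $\lambda_3$ the rate at which $L_3(q)$ approaches $0$ from below along the best points, and the hatted exponents $\widehat w_3,\widehat w_3^\ast,\widehat\lambda_3$ are controlled by the extremal (lower resp.\ upper) limits of suitable ratios $L_i(q)/q$. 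Thus once the graph is described, \eqref{eq:equ}, \eqref{eq:glm} and \eqref{eq:wstern} become a matter of extracting the relevant $\liminf$/$\limsup$; the genuinely new content is the star exponent \eqref{eq:wstern}, so I would spend most effort there.

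For the equalities in \eqref{eq:equ}: the bound $w_3(\zeta)=w_3^\ast(\zeta)=1+2/\sigma$ should follow from the fact that the minimal polynomials of the quadratic irrationals $[0;\overline{m_k}]$ (periodic continued fractions obtained by truncating $m_\varphi$) already realize the value $w_2(\zeta)=1+2/\sigma$ from Theorem~\ref{buglauthm}, giving $w_3(\zeta)\ge w_2(\zeta)=1+2/\sigma$; the matching upper bound $w_3(\zeta)\le 1+2/\sigma$ is the substantive inequality and should come from the geometry of the $L_i$-graph (no cubic polynomial does better than the quadratic ones), while $w_3^\ast(\zeta)=w_3(\zeta)$ follows because the extremal polynomials are, up to bounded factors, powers/products of these quadratics, so their derivatives at $\zeta$ are not abnormally small. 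For $\lambda_3(\zeta)=1/(1+2\sigma)$ I would invoke Khintchine's transference \eqref{eq:khintchine} with $n=3$ and $w_3(\zeta)=1+2/\sigma$: the upper bound gives $\lambda_3(\zeta)\le (w_3-2)/3=(2/\sigma-1)/3$, but in fact the sharper lower transference bound $\lambda_3(\zeta)\ge w_3/(2w_3+3)$ evaluates to exactly $1/(1+2\sigma)$, and one shows this is also an upper bound by the graph analysis (the third minimum $L_3$ never does better than this rate). For \eqref{eq:glm}: $\widehat w_3(\zeta)=3$ and $\widehat\lambda_3(\zeta)=1/3$ say that $\zeta$ behaves generically for uniform approximation; since $\widehat w_3\ge 3$ and $\widehat\lambda_3\ge 1/3$ always hold by \eqref{eq:wmono} and \eqref{eq:ldiri}, it suffices to produce, for arbitrarily large $X$, a gap in the sequence of best approximation polynomials (resp.\ integer vectors) that prevents any uniform exponent exceeding $3$ (resp.\ $1/3$); this gap is exactly the long "drift" stretches in the combined graph between the quadratic resonances, during which $L_1(q)/q\to 0$, and it should be visible directly from the recursive structure \eqref{eq:m}.

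For the main new statement \eqref{eq:wstern}, $\widehat w_3^\ast(\zeta)=2+\sigma$: the lower bound $\widehat w_3^\ast(\zeta)\ge 2+\sigma$ follows immediately from $\widehat w_3^\ast(\zeta)\ge \widehat w_2^\ast(\zeta)=2+\sigma$ (Theorem~\ref{buglauthm}), since a quadratic algebraic number is in particular cubic and the monotonicity of the starred hatted exponents in $n$ gives the inequality. The hard part is the upper bound $\widehat w_3^\ast(\zeta)\le 2+\sigma$: one must show that for all sufficiently large $X$ there is a real algebraic $\alpha$ of degree $\le 3$ with $H(\alpha)\le X$ and $|\alpha-\zeta|\le H(\alpha)^{-1}X^{-(2+\sigma)}$, and no better. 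I expect this to be the principal obstacle, because $\widehat w_3^\ast$ can in principle exceed $\widehat w_3-2$ and genuinely cubic roots must be controlled: the approach is to use the known inequality relating $\widehat w_3^\ast$ to $\widehat w_3$ together with a dual argument — good cubic approximants $\alpha$ correspond to cubic polynomials $P$ with both $P(\zeta)$ and $P'(\zeta)$ under control, hence to points where $L_2(q)$ (not just $L_1(q)$) is close to its lower envelope — and to read off from the combined graph that the second minimum returns to a value forcing $H(\alpha)^{1+\widehat w_3^\ast}\ll X^{2+\sigma}$ infinitely often with the worst case giving exactly $2+\sigma$. A cleaner route, if available, is to quote a general inequality of the form $\widehat w_n^\ast(\zeta)\le \widehat w_{n-1}(\zeta)$ or a Davenport--Schmidt type bound specialized to $n=3$, combined with $\widehat w_2(\zeta)=2+\sigma$; I would check whether the geometry already established yields $\widehat w_3^\ast(\zeta)\le \widehat w_2(\zeta)$ for these specific $\zeta$, which would close the argument.
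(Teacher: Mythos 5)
Your overall plan---describe the combined Schmidt--Summerer graph and read all six exponents off its extremal slopes, with the monotonicity arguments $w_2^\ast\leq w_3^\ast\leq w_3$ and $\widehat w_3^\ast\geq\widehat w_2^\ast$ supplying the easy bounds---is the same strategy the paper follows. However, there are concrete problems. First, you set up the graph for the point $(1,\zeta,\zeta^2)$ with three minima functions $L_1\leq L_2\leq L_3$; this is the two-dimensional problem already settled by Bugeaud--Laurent, and it encodes $w_2,\lambda_2,\widehat w_2,\widehat\lambda_2$ only. The cubic exponents require the point $(1,\zeta,\zeta^2,\zeta^3)$, i.e.\ four functions $L_{3,1}^\ast,\dots,L_{3,4}^\ast$, and the entire difficulty of the theorem lies in controlling the \emph{third and fourth} minima there. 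Second, your derivation of $\lambda_3(\zeta)=1/(1+2\sigma)$ is numerically wrong: Khintchine's lower transference bound for $n=3$ gives $\lambda_3\geq w_3/(2w_3+3)=(\sigma+2)/(5\sigma+4)$, and $(\sigma+2)/(5\sigma+4)=1/(1+2\sigma)$ forces $\sigma^2=1$, which never happens since $\sigma\in[0,\gamma^{-1}]$. (For extremal numbers the transference bound is $\approx 0.369$ while the true value is $1/\sqrt5\approx 0.447$.) The correct lower bound comes instead from a successive-minima duality argument: the shifts $T^jW_k$ give $n-1$ linearly independent polynomials of the same quality, whence $w_{3,2}(\zeta)\geq 1+2/\sigma$, and Mahler duality converts this into a lower bound for $\lambda_3$; the matching upper bound again needs the full graph.

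The more serious omission is that you never identify the construction that makes the graph description (and hence $\widehat w_3=3$, which you treat as ``visible directly from the recursive structure'') provable. The paper's mechanism is to produce, at the parameters $X_k=H(W_{k+1})$, \emph{four linearly independent} cubic polynomials of height $\ll X_k$ and evaluation $\leq X_k^{-3+\epsilon}$, namely $W_kE_t$, $W_kE_{t-1}$, $W_{k+1}$, $TW_{k+1}$, where the $E_t$ are one-dimensional best approximation denominators chosen so that $H(W_kE_t)\asymp X_k$; linear independence uses irreducibility of the $W_k$ and Gelfond's lemma, and Minkowski's second theorem then forces $\overline\psi_{3,1}^\ast=0$, i.e.\ $\widehat w_3(\zeta)=3$. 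These same products $W_kE_t$ are what drive the rising slope $1/3$ of $L_{3,3}^\ast,L_{3,4}^\ast$ in the graph, so without them neither \eqref{eq:glm} nor \eqref{eq:equ} can be completed. Finally, for \eqref{eq:wstern} your proposed shortcut $\widehat w_n^\ast\leq\widehat w_{n-1}$ is not an available general inequality; the actual argument is that near $X=H(W_{k+1})$ the only cubic polynomials taking small values at $\zeta$ lie in the span of $\{W_k,TW_k\}$, which contains no irreducible cubic, so the best cubic algebraic approximant of such height is the quadratic $\alpha_k$ itself, and Lemma~\ref{laubug} then yields the exponent $2+1/\eta_k\to 2+\sigma$ along a subsequence. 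Your instinct that the second minimum controls $\widehat w_3^\ast$ points in the right direction, but the span/irreducibility argument is the missing step.
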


For $\sigma=\gamma^{-1}$ we obtain the results
for extremal numbers of \cite[Theorem~2.1]{ichlondon}, apart from the new claims
for $w_{3}^{\ast}$ and $\widehat{w}_{3}^{\ast}$.
The proof of Theorem~\ref{neu} will lead to the following generalization of~\cite[Theorem~2.2]{ichlondon}. 

\begin{theorem} \label{sternchen}
Let $\zeta=\zeta_{\varphi}$ with corresponding $\sigma=\sigma_{\varphi}$ be as above 
and $\epsilon>0$. Then 
\begin{equation} \label{eq:trio}
\vert Q(\zeta)\vert \leq H(Q)^{-3-\epsilon}
\end{equation}
has only finitely many irreducible solutions $Q\in{\mathbb{Z}[T]}$ of degree precisely three.
In particular
\begin{equation} \label{eq:genaudrei1}
\vert \zeta-\alpha\vert \leq H(\alpha)^{-4-\epsilon} 
\end{equation}
has only finitely many algebraic solutions $\alpha$ of degree precisely three.
On the other hand, the inequalities
\begin{equation} \label{eq:otherhand}
\vert Q(\zeta)\vert \leq H(Q)^{-3+\epsilon}, \qquad \vert \zeta-\alpha\vert \leq H(\alpha)^{-4+\epsilon} 
\end{equation}
have solutions in irreducible polynomials
$P$ and algebraic numbers $\alpha$ of degree precisely three respectively, for 
arbitrarily large $H(Q)$ and $H(\alpha)$. 
Moreover, there exist arbitrarily large values of $X$ such that
\begin{equation} \label{eq:trio2}
H(Q)\leq X, \qquad \vert Q(\zeta)\vert \leq X^{-2\sigma-1-\epsilon}
\end{equation}
has no irreducible solution $Q\in{\mathbb{Z}[T]}$ of degree precisely three.  
Consequently, for arbitrarily large values of $X$ the system
\begin{equation} \label{eq:genaudrei2}
H(\alpha)\leq X, \qquad \vert \zeta-\alpha\vert \leq H(\alpha)^{-1}X^{-2\sigma-1-\epsilon} 
\end{equation}
admits no solution in $\alpha$ an algebraic number of degree precisely three. Conversely, for any large $X$ the estimates
\begin{equation} \label{eq:pere}
\vert Q(\zeta)\vert \leq X^{-2\sigma-1+\epsilon}, \qquad
\vert \zeta-\alpha\vert \leq H(\alpha)^{-1}X^{-2\sigma-1+\epsilon}
\end{equation}
have solutions in irreducible cubic polynomials $Q\in{\mathbb{Z}[T]}$ and cubic
algebraic numbers $\alpha$ of respective height at most $X$.
\end{theorem}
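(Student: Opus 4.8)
The plan is to deduce Theorem~\ref{sternchen} from the quantitative information encoded in the successive-minima description underlying Theorem~\ref{neu}, rather than re-running the combinatorics of the Sturmian word from scratch. The two halves of the statement correspond to two different ways of controlling cubic polynomials: the ``degree precisely three'' finiteness in \eqref{eq:trio}--\eqref{eq:genaudrei1} reflects the \emph{uniform} exponent $\widehat{w}_{3}(\zeta)=3$ from \eqref{eq:glm} (refined to exclude the quadratic contributions coming from $w_2,\widehat w_2$), while the gap statements \eqref{eq:trio2}--\eqref{eq:pere} reflect the value $\widehat{w}_{3}^{\ast}(\zeta)=2+\sigma$ from \eqref{eq:wstern} together with the ``best polynomials'' being genuinely cubic rather than linear or quadratic.

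First I would address \eqref{eq:trio}. The bound $\widehat{w}_3(\zeta)=3$ only says that for all large $X$ \emph{some} cubic (or lower degree) $P$ with $H(P)\le X$ achieves $|P(\zeta)|\le X^{-3}$; I need the sharper assertion that no \emph{irreducible} genuinely cubic $Q$ can do appreciably better than $H(Q)^{-3}$. The route is to suppose $|Q(\zeta)|\le H(Q)^{-3-\epsilon}$ for infinitely many irreducible cubic $Q$ and derive a contradiction with the structure of the combined graph of the Schmidt--Summerer functions in dimension three: such an exceptionally good $Q$ would force the second successive minimum function to dip below the level prescribed by the Sturmian combinatorics. Concretely one uses that the minimal points for the dual problem are, up to the word structure, governed by the convergents $p_k/q_k$ of $\zeta$ and the ``extremal'' quadratic factors built from three consecutive convergents; an irreducible cubic beating exponent $3+\epsilon$ would have to be essentially aligned with one of these, contradicting irreducibility of degree exactly three for all but finitely many indices. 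The transfer to \eqref{eq:genaudrei1} is then the standard passage from polynomials to algebraic numbers: if $\alpha$ is a cubic root of $Q$ closest to $\zeta$, then $|Q(\zeta)|\asymp H(Q)\,|\zeta-\alpha|$ up to a factor depending only on the separation of the conjugates of $\alpha$ (which, since $\zeta$ is not cubic algebraic, can be absorbed after discarding finitely many exceptions and possibly worsening $\epsilon$ slightly), so $|\zeta-\alpha|\le H(\alpha)^{-4-\epsilon}$ has only finitely many cubic solutions.

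Next, for the lower bounds \eqref{eq:otherhand} I would exhibit the witnessing cubics explicitly. From the proof of $w_3(\zeta)=w_3^*(\zeta)=1+2/\sigma\ge 3$ one already obtains polynomials $P$ with $|P(\zeta)|\le H(P)^{-w_3(\zeta)+o(1)}$ and $w_3(\zeta)>3$ in general; but I must ensure these can be taken \emph{irreducible of degree exactly three}. The natural candidates are of the form $P_k(T)=q_kT^3-(\text{lower order})$ or products adjusted by the word, and one checks irreducibility by a height/root-location argument: a genuinely cubic $P_k$ with a small value at $\zeta$ and controlled height cannot factor as linear times quadratic without one factor being an extremely good rational or quadratic approximation, which the explicit estimates rule out for infinitely many $k$. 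Similarly \eqref{eq:trio2} and \eqref{eq:genaudrei2} come from choosing $X$ to lie in the ``bad'' ranges dictated by $\widehat{w}_3^*(\zeta)=2+\sigma$: along a subsequence of indices $k$ (those realizing the $\liminf$ in \eqref{eq:sigmarder}) there is a window of heights below $X$ within which the best irreducible cubic only attains $|Q(\zeta)|\approx X^{-2\sigma-1}$, because the would-be better cubic approximants are forced to be reducible (linear or quadratic) there. Conversely \eqref{eq:pere} records that for \emph{every} large $X$ such a cubic of quality $X^{-2\sigma-1+\epsilon}$ does exist, which is exactly the content of the lower bound half of $\widehat{w}_3^*(\zeta)\ge 2+\sigma$ proved en route to Theorem~\ref{neu}, again after verifying genuine cubic irreducibility.

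The main obstacle, in all four parts, is the irreducibility bookkeeping: the successive-minima machinery naturally produces integer polynomials of degree at most three with prescribed smallness, but the theorem insists on degree \emph{precisely} three and on \emph{irreducible} polynomials, so throughout I must argue that the relevant extremal lattice points are not secretly coming from the two-dimensional problem (which is completely described by Theorem~\ref{buglauthm}) nor from reducible cubics. I expect this to be handled by a separation-of-scales argument comparing the exponent $2+\sigma<3+2/\sigma$ attainable by linear/quadratic factors against what a genuine cubic achieves, plus the observation that a cubic factoring as linear times irreducible quadratic would yield, for the quadratic factor, an approximation quality exceeding $w_2(\zeta)=1+2/\sigma$, which is impossible. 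Finite exceptional sets are discarded freely since the statements are all ``for all but finitely many'' or ``for arbitrarily large / all large $X$''.
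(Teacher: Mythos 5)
Your treatment of the finiteness statements \eqref{eq:trio}, \eqref{eq:genaudrei1}, \eqref{eq:trio2}, \eqref{eq:genaudrei2} is in essence the paper's argument: the first two dual successive minima are realized inside the two-dimensional space spanned by $\{W_k, TW_k\}$, which contains no irreducible cubic, so an irreducible cubic is controlled by the third minimum exponents $w_{3,3}(\zeta)=3$ and $\widehat{w}_{3,3}(\zeta)=1+2\sigma$, and the passage to algebraic numbers uses only $\vert Q(\zeta)\vert\ll H(Q)\vert\zeta-\alpha\vert$ (no conjugate-separation input is needed). But your plan for the existence statements \eqref{eq:otherhand} and \eqref{eq:pere} has a genuine gap, and in fact starts from a false premise. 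You propose to take the polynomials witnessing $w_3(\zeta)=1+2/\sigma$ and ``ensure these can be taken irreducible of degree exactly three.'' That is impossible: those extremal polynomials are exactly the quadratics $W_k$ (and their linear multiples), and the first half of the very theorem you are proving shows that irreducible cubics never achieve an exponent beyond $3+\epsilon$, whereas $1+2/\sigma>3$ always. Relatedly, your irreducibility heuristic --- that a cubic factoring as linear times quadratic would force the quadratic factor to beat $w_2(\zeta)=1+2/\sigma$, ``which is impossible'' --- is backwards: $w_2(\zeta)=1+2/\sigma>3$, so quadratic factors achieving exponent $3+\epsilon$ exist in abundance (the $W_k$ themselves), which is precisely why reducible cubics with very small values are plentiful and why producing \emph{irreducible} cubic witnesses is the nontrivial point.

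What is actually needed here, and what the paper supplies, is a separate construction: Davenport--Schmidt's Lemma~1 (a transference/resultant argument) produces, from the uniform exponents $\widehat{\lambda}_3(\zeta)=1/3$ resp.\ $\widehat{w}_{3,4}(\zeta)=1+2\sigma$ established in Theorem~\ref{neu}, algebraic integers of degree at most $4$ with $\vert\zeta-\alpha\vert\leq H(\alpha)^{-4+\epsilon}$ (resp.\ the uniform analogue), and then the method of \cite[Theorem~2.11]{bugbuch} --- with the Liouville inequality replaced by the hypothesis $\widehat{w}_3(\zeta)=3$ --- forces degree exactly three and irreducibility. None of this machinery appears in your outline, and your vague candidates ``$P_k(T)=q_kT^3-(\text{lower order})$'' are not backed by any estimate. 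Finally, for \eqref{eq:pere} you appeal to ``the lower bound half of $\widehat{w}_3^{\ast}(\zeta)\geq 2+\sigma$,'' but that bound comes from the quadratic approximants $\alpha_k$ and moreover $2+\sigma$ is not the exponent $1+2\sigma$ appearing in \eqref{eq:pere}; the correct input is $\widehat{w}_{3,4}(\zeta)=1+2\sigma$ fed into the uniform version of the Davenport--Schmidt construction.
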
   

The estimate \eqref{eq:pere} was only conjectured
for the special case of extremal numbers
in~\cite{ichlondon}, however this estimate as well as \eqref{eq:otherhand} are essentially 
consequences of the method and results of Davenport and 
Schmidt~\cite{davsh}, 
with some minor modifications explained in
Bugeaud~\cite{bugbuch}.
The proof should in fact allow for replacing the $\epsilon$ in the exponents
by suitable multiplicative constants throughout. 
We do not carry this out. 

Finally we discuss the exponents $\lambda_{n}(\zeta_{\varphi})$ for $n>3$.
In contrast to Theorem~\ref{neu} and Theorem~\ref{sternchen}, which are 
generalizations of~\cite[Theorem~2.1 and Theorem~2.2]{ichlondon} with similar proofs, 
no variant of the next theorem was mentioned in~\cite{ichlondon}.
It is essentially derived from specializing 
recent results from~\cite{ichsuccessive}
and~\cite{ichjra}.

\begin{theorem} \label{nvier}
Let $\zeta=\zeta_{\varphi}$ with corresponding $\sigma=\sigma_{\varphi}$
and $n$ be a positive integer. Then 
\begin{equation} \label{eq:az}
\max\left\{\frac{1}{n}\;,\;\frac{2+(3-n)\sigma}{2+(n+1)\sigma}\right\}\leq \lambda_{n}(\zeta)\leq \frac{1}{1+2\sigma}, \qquad\qquad n\geq 3.
\end{equation}
In particular, if and only if $\sigma=0$ we have
\begin{equation} \label{eq:gleidohar}
\lambda_{n}(\zeta)=1, \qquad \qquad n\geq 1.
\end{equation}
On the other hand, if $\sigma>0$, we have the refined upper bounds
\begin{align} 
\lambda_{n}(\zeta)&\leq \min\left\{\frac{1}{(n-1)\sigma}, \frac{1}{1+2\sigma}\right\}, \qquad\qquad
 3\leq n\leq 2+\left\lfloor \frac{2}{\sigma}\right\rfloor,  \label{eq:neddoijesus}  \\
\lambda_{n}(\zeta)&\leq \frac{1}{2+\sigma}<\frac{1}{2}, 
\qquad\qquad\qquad\qquad\qquad n\geq 2+\left\lceil \frac{2}{\sigma}\right\rceil,  \label{eq:neddoije}
\end{align}
and
\begin{equation} \label{eq:ichneu}
\lim_{n\to\infty} \lambda_{n}(\zeta)=0.
\end{equation}
\end{theorem}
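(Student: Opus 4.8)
The plan is to treat Theorem~\ref{nvier} as a collection of separate statements and to derive each from the combined-graph description of the parametric successive minima functions in dimension three (which is the heart of the paper and, together with Theorem~\ref{neu}, may be assumed by this point), supplemented by general inequalities between the exponents $\lambda_n$ and projection/transference arguments in higher dimension. The first and easiest ingredient is the universal lower bound $\lambda_n(\zeta)\ge 1/n$ from \eqref{eq:ldiri}, which already appears in the maximum on the left of \eqref{eq:az}. For the second term $\frac{2+(3-n)\sigma}{2+(n+1)\sigma}$ in that maximum, I would produce explicit good rational approximations to $(\zeta,\zeta^2,\ldots,\zeta^n)$ by taking the known optimal approximating triples $(x_k,y_k,z_k)$ to $(\zeta,\zeta^2,\zeta^3)$ arising from the Sturmian combinatorics (the vectors realizing the minima at the ``peaks'' of the successive minima functions) and multiplying through by powers of $\zeta$ in the usual way, i.e.\ forming $(x,\; y,\; z,\; \text{round}(z\zeta),\ldots)$. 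One then estimates $|\zeta^{i}x - y_i|$ for $4\le i\le n$ in terms of the size of the denominator; the loss in quality is governed by the ratio of consecutive ``good'' denominators, which in the Sturmian case is controlled by $\sigma$ through the recursion \eqref{eq:m}. Carefully tracking exponents gives exactly the claimed bound, and this is precisely where I would cite \cite{ichsuccessive} and \cite{ichjra} for the bookkeeping rather than redo it.

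Next, the upper bound $\lambda_n(\zeta)\le \frac{1}{1+2\sigma}$ for all $n\ge3$ is immediate from monotonicity: $(\lambda_n(\zeta))_{n\ge1}$ is non-increasing, so $\lambda_n(\zeta)\le\lambda_3(\zeta)=\frac{1}{1+2\sigma}$ by \eqref{eq:equ} in Theorem~\ref{neu}. The ``if and only if'' statement \eqref{eq:gleidohar} then follows: if $\sigma=0$ the lower bound in \eqref{eq:az} collapses to $\lambda_n(\zeta)\ge \frac{2+3\cdot 0}{2+0}=1$ for $n=3$ — wait, more carefully, one uses that for $\sigma=0$ the Bugeaud--Laurent value $w_2(\zeta)=1+2/\sigma=\infty$, so $\zeta$ is a Liouville-type number and Theorem~\ref{buglauthm} gives $\lambda_2(\zeta)=1$; combined with $\lambda_n\le\lambda_2\le 1$ and a direct construction (again pushing the extremely good rational approximation to $\zeta$ up to all coordinates, which for a Liouville number costs essentially nothing) one gets $\lambda_n(\zeta)=1$ for every $n$. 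Conversely if $\sigma>0$ then already $\lambda_3(\zeta)=\frac{1}{1+2\sigma}<1$, so \eqref{eq:gleidohar} fails; this settles the equivalence.

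For the refined upper bounds when $\sigma>0$: the bound \eqref{eq:neddoijesus}, namely $\lambda_n(\zeta)\le \min\{\frac{1}{(n-1)\sigma},\frac{1}{1+2\sigma}\}$ for $3\le n\le 2+\lfloor 2/\sigma\rfloor$, and \eqref{eq:neddoije}, namely $\lambda_n(\zeta)\le\frac{1}{2+\sigma}$ for $n\ge 2+\lceil 2/\sigma\rceil$, should come from an upper-bound principle on $\lambda_n$ in terms of the exponent $w_m$ for a suitable $m\le n$ together with a counting/dimension argument: if $\zeta$ together with $n$ of its powers were simultaneously very well approximable, one could build, via a Siegel-type pigeonhole, a nonzero integer polynomial of controlled degree $m$ and height with small value at $\zeta$, forcing $w_m(\zeta)$ large; but $w_m(\zeta)$ is pinned down (it equals $1+2/\sigma$ for $m\ge3$ by Theorem~\ref{neu}, and for $m\le 2$ by Theorem~\ref{buglauthm}), so $\lambda_n(\zeta)$ cannot exceed the corresponding bound. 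The threshold $2+\lfloor 2/\sigma\rfloor$ is exactly the place where the ``polynomial side'' stops improving, reflecting that $w_n(\zeta)$ stabilizes once $n\ge 3$; for smaller $n$ in the window $[3, 2+\lfloor 2/\sigma\rfloor]$ the sharper $\frac{1}{(n-1)\sigma}$ comes from using the best polynomial of degree $n-1$ rather than degree $3$. I would formalize this using the general inequality of the form $\lambda_n(\zeta)\le \frac{1}{w_{n-1}^{\natural}(\zeta)}$ type (as exploited in \cite{ichsuccessive}); strictly $\frac{1}{(n-1)\sigma}$ arises by noting $w_{n-1}(\zeta)\ge (n-1)\sigma$ is not quite it — rather one should invoke the sharper Schmidt--Summerer-style transference in the combined graph, which is the genuine engine. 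Finally \eqref{eq:ichneu} is a trivial consequence of \eqref{eq:neddoije} letting $n\to\infty$: the bound $\frac{1}{2+\sigma}$ is only an intermediate step, and iterating the same counting argument with the degree-$m$ polynomial for $m\sim n/2$ forces $\lambda_n(\zeta)=O(1/n)\to0$; alternatively one cites the general result that $\lambda_n(\zeta)\to 1/\infty$ whenever $\zeta$ is not a Liouville number.

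The main obstacle is the precise determination of the threshold values and the exact shape of \eqref{eq:neddoijesus}: getting the constant $(n-1)\sigma$ rather than something like $3\sigma$ requires using the full structure of the successive minima functions (how many consecutive ``Sturmian blocks'' can contribute before the approximation quality saturates), and matching this with the floor/ceiling of $2/\sigma$ demands care with the boundary cases $\sigma=2/k$ for integer $k$. I expect the cleanest route is to cite \cite{ichsuccessive} and \cite{ichjra} for the general framework — the ``$n$-dimensional combined graph determines all $\lambda_n$'' machinery — and then to specialize the input data to the Sturmian parametric functions computed in the proof of Theorem~\ref{neu}, verifying that the specialization reproduces \eqref{eq:az}--\eqref{eq:ichneu}. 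The routine but slightly tedious part is propagating the $\epsilon$'s and confirming that the lower bound construction and the upper bound argument meet (or bracket) at the stated values; I would not grind through these estimates here but would indicate that they are entirely parallel to the $n=3$ case already treated.
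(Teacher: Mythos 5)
The central gap is in your lower bound for \eqref{eq:az}. Extending a good simultaneous approximation vector $(x,y_{1},y_{2},y_{3})$ for $(\zeta,\zeta^{2},\zeta^{3})$ to higher powers by setting $y_{4}=\mathrm{round}(y_{3}\zeta)$ does not work: $\zeta^{4}x=\zeta y_{3}+\zeta(\zeta^{3}x-y_{3})$, and there is no reason for $\zeta y_{3}$ to be near an integer, so $\vert \zeta^{4}x-y_{4}\vert$ is generically of order $1$ (this is exactly why the $\lambda_{n}$ decrease in general). The paper's argument runs entirely on the dual side, where multiplication \emph{is} free: from Theorem~\ref{buglauthm} one has quadratic polynomials $P=W_{k}$ with $\vert P(\zeta)\vert\leq H(P)^{-1-2/\sigma+\epsilon}$, and the $n-1$ polynomials $P,TP,\ldots,T^{n-2}P$ have equal heights and comparable values, giving $w_{n,n-1}(\zeta)\geq 1+2/\sigma$; this is converted to a bound on $\underline{\psi}_{n,n-1}^{\ast}$ via \eqref{eq:umrechnen2}, pushed to $\overline{\psi}_{n,n+1}^{\ast}$ by the Schmidt--Summerer inequality \eqref{eq:duales}, and transferred to $\underline{\psi}_{n,1}$, hence to $\lambda_{n}$, by Mahler duality \eqref{eq:jaja} and \eqref{eq:umrechnen}. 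None of this is ``bookkeeping from \cite{ichsuccessive}'': that reference enters only for the upper bounds \eqref{eq:neddoijesus}--\eqref{eq:neddoije}, where the precise input is the inequality $\lambda_{n}(\zeta)\leq\max\{w_{2}(\zeta)/(\widehat{w}_{2}(\zeta)\widehat{w}_{n-1}(\zeta)),\,1/\widehat{w}_{2}(\zeta)\}$ into which one substitutes the Bugeaud--Laurent values and $\widehat{w}_{n-1}\geq n-1$; your ``Siegel pigeonhole forcing $w_{m}$ large'' sketch would not by itself produce the constant $1/((n-1)\sigma)$ or the threshold $2+\lfloor 2/\sigma\rfloor$.

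Two further problems. First, \eqref{eq:ichneu} is not a consequence of \eqref{eq:neddoije} by letting $n\to\infty$: that bound is the constant $1/(2+\sigma)$ and does not tend to $0$. The correct criterion is that $\lambda_{n}(\zeta)\to0$ for any $\zeta$ that is not a $U$-number, not merely ``not a Liouville number'' --- a Sturmian continued fraction is badly approximable, hence trivially not Liouville, but that says nothing about $w_{m}(\zeta)<\infty$ for $m\geq 4$; establishing that $\zeta_{\varphi}$ with $\sigma>0$ is not a $U$-number requires the transcendence measure of Adamczewski--Bugeaud, which you never invoke, and your proposed ``iteration of the counting argument'' would need exactly these missing upper bounds on $w_{m}(\zeta)$ for all $m$. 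Second, your treatment of $\sigma=0$ via ``Liouville-type'' numbers and ``pushing the extremely good rational approximation to $\zeta$ up to all coordinates'' fails for the same reason, since $w_{1}(\zeta_{\varphi})=1$; your first instinct --- set $\sigma=0$ in \eqref{eq:az} and compare with the upper bound, handling $n\leq 2$ by bounded partial quotients and Theorem~\ref{buglauthm} --- is the paper's (correct) argument and needs no detour.
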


For $n=3$ the lower bound in~\eqref{eq:az} becomes the precise value from Theorem~\ref{neu}, but notice there is no equality for $n=2$.
Also there is no reason to believe in equality for any Sturmian continued fraction
and any $n\geq 4$. A better bound for $n=4$ and $\zeta$ an extremal number
was established in~\cite[Theorem~2.3]{ichlondon}. 
In the deduction the identities $w_{2}(\zeta)/\widehat{w}_{2}(\zeta)=\gamma$ and $\gamma^{2}-\gamma-1=0$
played a crucial role.
For general Sturmian continued fractions $\zeta_{\varphi}$ this kind of argument seems no longer to work.
In particular, the proof of~\cite[Theorem~2.3]{ichlondon} suggested equality in the right transference inequality of \eqref{eq:khintchine}
for $\zeta$ an extremal number and $n=4$.
There is no reason for this identity to extend to an arbitrary Sturmian continued fraction $\zeta_{\varphi}$. 
The lower bound in~\eqref{eq:az} is of interest only for $n$ not too large, unless $\sigma=0$. Indeed,
in case of $\sigma>0$, for $n>1+2/\sigma$ we only get
$\lambda_{n}(\zeta)\geq 1/n$ which is the trivial bound from \eqref{eq:ldiri}. 
Notice also that numbers with the property
\eqref{eq:gleidohar} were constructed in~\cite[Theorem~4]{bug} by a similar word concatenation argument.
For $\zeta$ an extremal number, the estimates \eqref{eq:neddoijesus} and \eqref{eq:neddoije} lead to the new information
\[
\lambda_{5}(\zeta)\leq \frac{\sqrt{5}+1}{8}\approx 0.4045, \qquad \lambda_{6}(\zeta)\leq \frac{3-\sqrt{5}}{2}\approx 0.3820.
\]
The bounds are smaller than $\lambda_{3}(\zeta)=1/\sqrt{5}\approx 0.4472$, however
considerably larger than the expected value $(\sqrt{5}-1)/4\approx 0.3090$ for $\lambda_{4}(\zeta)$,
see~\cite[Theorem~2.3]{ichlondon} and the enclosed comments. 
Concerning \eqref{eq:ichneu}, we refer to~\cite[Section~4.3]{ichjra} for a semi-effective minimum rate at which the 
limit $0$ is approached, 
and analogous
results for more general classes of numbers.

\subsection{Spectra}  \label{spectra}

We display immediate consequences of Theorem~\ref{neu} concerning the spectra of 
certain approximation constants. Following~\cite{buglau} we define the spectrum
of an exponent of approximation as the set of real values obtained
as $\zeta$ runs through the real transcendental numbers. 
For convenience we will write $\rm{spec}(.)$, for example $\rm{spec}(\lambda_{3})$.
Recall the properties of the set $\mathcal{S}$ from Section~\ref{sek1}. For a definition 
of Hausdorff dimension see~\cite{falconer}.

\begin{corollary}
For any $\epsilon>0$, the sets
\[
\rm{spec}(\lambda_{3})\cap [1-\epsilon,1], \quad \rm{spec}(\widehat{w}_{3}^{\ast})\cap [2,2+\epsilon] 
,\quad \rm{spec}(\widehat{w}_{3}-\widehat{w}_{3}^{\ast})\cap [1-\epsilon,1]
\]
all have Hausdorff dimension $1$. Moreover
the points $1/(2s+1), 2+s$ and $1-s$ are accumulation points of 
$\rm{spec}(\lambda_{3}), \rm{spec}(\widehat{w}_{3}^{\ast})$ 
and $\rm{spec}(\widehat{w}_{3}-\widehat{w}_{3}^{\ast})$ respectively.
\end{corollary}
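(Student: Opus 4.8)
The plan is to derive the Corollary directly from Theorem~\ref{neu} together with the structural information about the set $\mathcal{S}\subseteq[0,\gamma^{-1}]$ recalled in Section~\ref{sek1}. By Theorem~\ref{neu}, for a Sturmian continued fraction $\zeta=\zeta_\varphi$ with parameter $\sigma=\sigma_\varphi$ we have the explicit formulas $\lambda_3(\zeta)=1/(1+2\sigma)$, $\widehat{w}_3^{\ast}(\zeta)=2+\sigma$, and $\widehat{w}_3(\zeta)-\widehat{w}_3^{\ast}(\zeta)=3-(2+\sigma)=1-\sigma$. So the three maps $\sigma\mapsto 1/(1+2\sigma)$, $\sigma\mapsto 2+\sigma$, and $\sigma\mapsto 1-\sigma$ are each real-analytic (in particular bi-Lipschitz on compact subsets of $(0,\infty)$), and they push the set $\mathcal{S}$ into the respective spectra. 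Consequently each of $\mathrm{spec}(\lambda_3)$, $\mathrm{spec}(\widehat{w}_3^{\ast})$, $\mathrm{spec}(\widehat{w}_3-\widehat{w}_3^{\ast})$ contains the image of $\mathcal{S}$ under the corresponding map, and Hausdorff dimension is preserved under bi-Lipschitz maps (see~\cite{falconer}).

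The next step is to translate the known metric and topological properties of $\mathcal{S}$ through these maps. First, the identities at $\sigma=\gamma^{-1}$ give the concrete numerical values: $1/(1+2\gamma^{-1})$, $2+\gamma^{-1}$, $1-\gamma^{-1}$, which one checks coincide after simplification with the three points listed in the statement when $s$ is replaced by $\gamma^{-1}$ — but in fact the relevant accumulation point of $\mathcal{S}$ to use is $s\approx 0.3867$, the largest accumulation point of $\mathcal{S}$. Since $s$ is an accumulation point of $\mathcal{S}$ and each of the three maps is continuous and injective, the images $1/(2s+1)$, $2+s$, $1-s$ are accumulation points of the respective spectra; this gives the second assertion of the Corollary. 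For the Hausdorff dimension statement, I would invoke~\cite[Theorem~8.2]{buglau}: for every $s'<s$ the intersection $[s',s]\cap\mathcal{S}$ is uncountable, and more precisely (as one extracts from that reference) has Hausdorff dimension $1$, or at least dimension tending to $1$ as $s'\to s$. Applying the bi-Lipschitz maps, the sets $\mathrm{spec}(\lambda_3)\cap[1/(2s+1),1]$ etc. contain subsets of Hausdorff dimension arbitrarily close to $1$; since these images accumulate at the endpoints $1/(2s+1)$, $2+s$, $1-s$ respectively, and since $1/(2s+1)<1$, $2+s>2$, $1-s<1$, for any $\epsilon>0$ the intersections $\mathrm{spec}(\lambda_3)\cap[1-\epsilon,1]$, $\mathrm{spec}(\widehat{w}_3^{\ast})\cap[2,2+\epsilon]$, $\mathrm{spec}(\widehat{w}_3-\widehat{w}_3^{\ast})\cap[1-\epsilon,1]$ contain a bi-Lipschitz image of some $[s',s]\cap\mathcal{S}$ and hence have Hausdorff dimension $1$. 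Here one uses that the supremum of dimensions of an increasing family of sets equals the dimension of the union, so taking $s'\to s$ one genuinely attains dimension $1$.

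The main obstacle I expect is the precise extraction of the Hausdorff dimension assertion from~\cite{buglau}: the cited~\cite[Theorem~8.2]{buglau} asserts uncountability of $[s',s]\cap\mathcal{S}$, and I would need to confirm that the construction there (continued fractions of the parameters $[0;s_1,s_2,\ldots]$ ranging over suitable sequences, analyzed via the work of Cassaigne~\cite{cassaigne} and Fischler~\cite{fisch1},~\cite{fisch2}) actually yields full Hausdorff dimension near $s$ rather than merely positive measure or uncountability. If~\cite{buglau} only gives uncountability, I would instead note that the set of $\sigma_\varphi$ arising from sequences $(s_k)$ with $s_k\in\{1,2\}$ already forms a self-similar-type set whose closure near $s$ has dimension approaching $1$, by a standard pressure/Bowen-type computation on the associated continued-fraction IFS, and that this suffices. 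All remaining points — continuity and injectivity of the three explicit maps, bi-Lipschitz invariance of dimension, and the fact that $s\in(0,\gamma^{-1})$ so that the three target points lie strictly inside the claimed neighborhoods — are routine and require no further argument beyond citing Theorem~\ref{neu} and~\cite{falconer}.
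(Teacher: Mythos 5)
There is a genuine gap in the Hausdorff dimension part: you are working at the wrong end of the set $\mathcal{S}$. The three intervals in the statement, $[1-\epsilon,1]$ for $\lambda_{3}$, $[2,2+\epsilon]$ for $\widehat{w}_{3}^{\ast}$ and $[1-\epsilon,1]$ for $\widehat{w}_{3}-\widehat{w}_{3}^{\ast}$, correspond under the maps $\sigma\mapsto 1/(1+2\sigma)$, $\sigma\mapsto 2+\sigma$, $\sigma\mapsto 1-\sigma$ to values of $\sigma$ in a small right neighborhood of $0$, not near the largest accumulation point $s\approx 0.3867$. The image of $[s',s]\cap\mathcal{S}$ under, say, $\sigma\mapsto 1/(1+2\sigma)$ is contained in $[1/(1+2s),1/(1+2s')]$, an interval around $0.56$ that is disjoint from $[1-\epsilon,1]$ for every small $\epsilon$; so your sets simply do not land in the intersections whose dimension is to be computed. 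In addition, \cite[Theorem~8.2]{buglau} only gives uncountability of $[s',s]\cap\mathcal{S}$, which says nothing about Hausdorff dimension (and the paper even notes that every interval in $[0,s]$ contains a subinterval missing $\mathcal{S}$, so no cheap density argument is available).

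What is actually needed, and what the paper uses, is that $\mathcal{S}\cap[0,\delta)$ has Hausdorff dimension $1$ for every $\delta>0$. This follows from the remark on page~27 of \cite{buglau}: the reciprocal of any number with continued fraction expansion $[K;a_{1},a_{2},\ldots]$ with all $a_{j}<K$ belongs to $\mathcal{S}$; these reciprocals are at most $1/K$, hence accumulate at $0$, and by Jarn\'ik's theorem the Hausdorff dimension of the corresponding sets tends to $1$ as $K\to\infty$. Pushing these sets forward by the three bi-Lipschitz maps gives the dimension claim for the stated intersections. Your treatment of the accumulation-point assertion (continuity and injectivity of the three maps applied to the accumulation point $s$ of $\mathcal{S}$) is correct and agrees with the paper; it is only the metric statement that needs to be re-based at $\sigma=0$ with the Jarn\'ik input replacing the uncountability of $[s',s]\cap\mathcal{S}$.
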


\begin{proof}
Theorem~\ref{neu} shows that any $s^{\prime}\in{\mathcal{S}}$ gives rise to a number
$1/(2s^{\prime}+1)$ in $\rm{spec}(\lambda_{3})$, a number $2+s^{\prime}$ in
$\rm{spec}(\widehat{w}_{3}^{\ast})$ and a number $1-s^{\prime}$ in
$\rm{spec}(\widehat{w}_{3}-\widehat{w}_{3}^{\ast})$. On the other hand,
as essentially shown in~\cite{buglau}, 
the intersection of the set $\mathcal{S}$ with $[0,\epsilon)$ has full Hausdorff dimension.
Indeed, the reciprocal of any number with continued fraction expansion
of the form $[K;a_{1},a_{2},\cdots]$ with all $a_{j}<K$, belongs to $\mathcal{S}$
as remarked in~\cite[page~27]{buglau}.
These reciprocal numbers obviously tend to $0$ as $K\to\infty$. On the other hand, 
as remarked in~\cite[page~27]{buglau}, Jarn\'ik showed that 
the dimension of the implied sets tends to $1$ as $K\to\infty$. 
Thus the metrical results follow from the invariance of Hausdorff dimensions under bi-Lipschitz continuous maps.
The latter numbers are accumulation points in the respective spectra since $s$ is an accumulation points of $\mathcal{S}$.
\end{proof}

We expect that $\rm{spec}(\lambda_{3})$ equals the entire interval $[1/3,\infty]$.
This is~\cite[Problem~1]{bug} for $n=3$.
However, only the interval $[1,\infty]$ is known to be included and 
corresponding $\zeta$ can be constructed, as first noticed by Bugeaud~\cite[Theorem~2]{bug}. 
In this interval the numbers $\zeta$ can
even be chosen in the Cantor middle third set, see~\cite[Theorem~4.4]{schlei} and
the more general~\cite[Theorem~2.9]{ichgj}.
Moreover the Hausdorff dimensions of $\{ \zeta\in\mathbb{R}: \lambda_{3}(\zeta)=t\}$
for a prescribed value $t\in[1,\infty]$ are known~\cite[Corollary~1.8]{schlei}. 
On the other hand, very little is known about the remaining interval $[1/3,1)$. In fact the 
first explicit constructions of numbers with prescribed value 
of $\lambda_{n}(\zeta)<1$
for any $n\geq 2$ seem to be extremal numbers $\zeta$ in~\cite{ichlondon}, and $n=3$. 
The spectrum of $\widehat{w}_{3}^{\ast}$ is also supposed to contain the interval $[1,3]$
and $\rm{spec}(\widehat{w}_{3}-\widehat{w}_{3}^{\ast})$ to equal the entire interval $[0,2]$
in which it is contained according to the restriction \eqref{eq:sternug}. However, none 
of these conjectures has been proved. However, as pointed out to me by Y. Bugeaud,
it can be deduced from combination of \cite[Corollary~1]{bug} and~\cite[Theorem~2.2]{buschl} with $m=1$
that $\rm{spec}(\widehat{w}_{n}^{\ast})$ contains $[1,2-1/n]$ 
and $\rm{spec}(\widehat{w}_{n}-\widehat{w}_{n}^{\ast})$
contains $[n-2+1/n,n-1]$, for all $n\geq 2$. The method shows that a sufficient criterion for the 
conjectures above is that numbers $\zeta$ with the property
\[
w_{1}(\zeta)=w_{2}(\zeta)=\cdots=w_{n}(\zeta)=w
\]
exist for all $w\geq n$. This is only known for $w\geq 2n-1$.
Finally we want to investigate the consequences of Theorem~\ref{neu} to $\rm{spec}(w_{3})$.
A metric result of Bernik~~\cite{bernik} implies $\rm{spec}(w_{3})=[3,\infty]$.
However, the numbers $\zeta_{\varphi}$ with $\sigma$ sufficiently close to $\gamma^{-1}=\gamma-1$ 
provide the first explicit constructions of transcendental real numbers with $w_{3}(\zeta)\in(3,5)$. On the other hand 
the subset of $\rm{spec}(w_{3})$ induced by some $\zeta_{\varphi}$ intersected with $(3,5)$ 
consists only of $\{2+\sqrt{5},2+2\sqrt{2}\}$ and is induced by only countably many $\zeta_{\varphi}$, 
as a closer look at the largest few elements of $\mathcal{S}$ shows, 
see again~\cite[page~27]{buglau}. For the remaining interval $[5,\infty]$ the 
construction of Bugeaud~\cite[Corollary~1]{bug} indicated above applies
and yields uncountably many $\zeta$ with prescribed value $w_{3}(\zeta)$. 

We want to point out that Roy~\cite{royexp} proved that $\rm{spec}(\widehat{w}_{2})$ is dense in $[2,\gamma+1]$,
which by Jarn\'iks identity~\cite{jarnik} also shows that $\rm{spec}(\widehat{\lambda}_{2})$ is dense
in $[1/2,\gamma-1]$. He constructed suitable numbers $\zeta$ in a class he called of 
Fibonacci type~\cite[Section~7]{royexp},
which provide a more general concept than Sturmian continued fractions.
A generalization of Theorem~\ref{neu} to Fibonacci type numbers
might lead to density results for
$\rm{spec}(\lambda_{3}), \rm{spec}(\widehat{w}_{3})$ and $\rm{spec}(\widehat{w}_{3}-\widehat{w}_{3}^{\ast})$
in the respective intervals $[1/\sqrt{5},1], [2,1+\gamma]$ and $[2-\gamma,1]$. 
We finally remark that $\rm{spec}(\widehat{w}_{2})$ has countable intersection with
$[c,\gamma+1]$ for some $c<\gamma+1$, in particular it is a proper subset of $[2,\gamma+1]$.

\section{Determination of the uniform exponents}  \label{uni}

\subsection{Proof of \eqref{eq:glm}} \label{glmres}
The main ingredient for the proof of
Theorem~\ref{neu} will be the following Theorem~\ref{vorb}, which is 
stated in a more general form then needed in our applications.
Roughly speaking it claims that if $w_{n-1}(\zeta)=n-1$ and moreover there are infinitely many pairs of
polynomials $P,Q\in\mathbb{Z}[T]$ of degree $n$ whose heights
do not differ too much and both $\vert P(\zeta)\vert$ and $\vert Q(\zeta)\vert$
are very small, then $\widehat{w}_{2n-1}(\zeta)=2n-1$ and $\widehat{\lambda}_{2n-1}(\zeta)=1/(2n-1)$.
This looks somehow exotic at first sight, but if $n=2$ and $\zeta=\zeta_{\varphi}$
the assumptions will be satisfied as can be inferred from results in~\cite{buglau}. 
In particular the claim \eqref{eq:glm} will follow as a direct consequence
of the results in~\cite{buglau} combined with Theorem~\ref{vorb} below, as we will show in this section.
This is also an intermediate step for the proof of \eqref{eq:equ}, however the deduction requires 
additional technical arguments involving parametric
geometry of numbers, very similar to the proof of~\cite[Theorem~2.1]{ichlondon}. 

\begin{theorem} \label{vorb}
Let $n\geq 2$ be an integer and $\zeta$ be a real number.
Assume $w_{n-1}(\zeta)=n-1$. Let $\epsilon>0$ arbitrarily small.
Assume there exist  (not necessarily disjoint) sequences of
polynomials $(P_{i})_{i\geq 1}$ and $(Q_{i})_{i\geq 1}$ 
in $\mathbb{Z}[T]$ of degree at most $n$ with the following properties:
\begin{itemize} 
 \item $\{P_{i},Q_{i}\}$ is linearly independent for $i\geq 1$
\item $H(P_{1})<H(P_{2})<\cdots$ and $H(Q_{1})<H(Q_{2})<\cdots$
\item $H(P_{i})<H(Q_{i})$ for $i\geq 1$ 
\item For large $i\geq i_{0}(\epsilon)$ we have
\begin{equation}  \label{eq:fra} 
\vert P_{i}(\zeta)\vert\leq H(P_{i})^{-2n+1+\epsilon},
 \qquad \vert Q_{i}(\zeta)\vert\leq H(Q_{i})^{-2n+1+\epsilon}, 
\end{equation}
\item if $\mu_{i}>0$ and $\nu_{i}\in(0,1]$ are defined by
\[
\vert P_{i}(\zeta)\vert = H(P_{i})^{-\mu_{i}}, \qquad H(Q_{i})^{\nu_{i}}=H(P_{i})
\]
then for large $i\geq i_{1}(\epsilon)$ we have
\begin{equation}  \label{eq:iden}
\frac{n}{\nu_{i}(\mu_{i}-n+1)}<1+\epsilon.
\end{equation}
\end{itemize}
Then for large $i\geq i_{2}(\epsilon)$ and the parameter $X_{i}=H(Q_{i})$, the system
\begin{equation} \label{eq:nolongere}
H(P)\ll_{n} X_{i}, \qquad 0<\vert P(\zeta)\vert \leq X_{i}^{-2n+1-\epsilon}
\end{equation}
has no non-trivial solution $P\in\mathbb{Z}[T]$ of degree at most $2n-1$,
for a suitable implied constant. In particular
\begin{equation} \label{eq:nolonger}
\widehat{w}_{2n-1}(\zeta)=2n-1, \qquad \widehat{\lambda}_{2n-1}(\zeta)=\frac{1}{2n-1}.
\end{equation}
\end{theorem}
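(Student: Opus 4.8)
The plan is to prove the no-solution statement \eqref{eq:nolongere} by contradiction and to read off \eqref{eq:nolonger} from it. Assume that for some large $i$ there is a non-zero $P\in\mathbb{Z}[T]$ with $\deg P\le 2n-1$, $H(P)\ll_n X_i$ and $0<|P(\zeta)|\le X_i^{-2n+1-\epsilon}$, where $X_i=H(Q_i)$. The aim is to combine $P$ with the pair $(P_i,Q_i)$---two very good approximations to $\zeta$ by polynomials of degree $\le n$, sitting at the (generally distinct) height scales $H(P_i)$ and $H(Q_i)=X_i$ and playing here the role that a pair of consecutive best approximations plays in the method of Davenport and Schmidt---so as to produce a non-zero integer polynomial of degree $\le n-1$ which approximates $\zeta$ so well that $w_{n-1}(\zeta)=n-1$ is contradicted. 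It is convenient to organise the estimates through the parametric geometry of numbers of Schmidt and Summerer applied to the lattice $\mathbb{Z}^{2n}$ of integer polynomials of degree $\le 2n-1$: for $q>0$ let $\mathcal{K}(q)=\{R:\deg R\le 2n-1,\ H(R)\le e^{q},\ |R(\zeta)|\le e^{-(2n-1)q}\}$, a convex body of volume bounded away from $0$ and $\infty$, with logarithmic successive minima $L_1(q)\le\cdots\le L_{2n}(q)$, so that $L_1+\cdots+L_{2n}=O(1)$ and each $L_k$ is Lipschitz.

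The argument rests on three points. First, the hypothetical $P$ is short: minimising over $q$ the maximum of its box-norm $H(P)e^{-q}$ and its slab-norm $|P(\zeta)|e^{(2n-1)q}$ gives $L_1(q)\le -\epsilon(\log X_i)/(2n)+O(1)$ at a parameter $q$ comparable to $\log X_i$ (the remaining very-small-$|P(\zeta)|$ range being easier), while the $n$ shifts $T^jP_i$ and the $n$ shifts $T^jQ_i$ ($0\le j\le n-1$) are, once the trivial case $Q_i\mid P$ is disposed of, families of vectors small in $\mathcal{K}(\cdot)$ near the parameters $\log H(P_i)$ and $\log X_i$, which pins the lower part of the combined graph near $\log X_i$ from above. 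Secondly, $w_{n-1}(\zeta)=n-1$ pins the same graph from below: the first minimum of $\mathcal{K}(q)$ taken only over the rank-$n$ sublattice of polynomials of degree $\le n-1$ has logarithm at least $(1-O(\epsilon))q$ for all large $q$, because for such an $R$ one compares $H(R)e^{-q}$ with $|R(\zeta)|e^{(2n-1)q}$ and uses $|R(\zeta)|\ge H(R)^{-(n-1)-\epsilon}$, valid for all but finitely many $R$ of degree $\le n-1$. Thirdly, one extracts a contradiction by descending to degree $\le n-1$: tracking how the combined graph must evolve between the parameters $\log H(P_i)$ and $\log X_i$ (bounded slopes, $\sum_kL_k=O(1)$) and using the explicit descent obtained from resultant-type manipulations of $P$ with $P_i$ when $\mu_i$ is large---so that \eqref{eq:iden} permits $H(P_i)$ much below $X_i$---or with the pair $(P_i,Q_i)$ jointly, through $\mathrm{Res}(P_i,Q_i)$ and the exterior-algebra relations among the relevant lattice vectors, when $\mu_i$ is only slightly above $2n-1$---which by \eqref{eq:iden} forces $H(P_i)$ and $H(Q_i)$ comparable---one finds that the logarithm of the restricted first minimum is forced to be at most $\big(n-\nu_i(\mu_i-n+1)+O(\epsilon)\big)$ times its parameter after substituting $H(P_i)=X_i^{\nu_i}$. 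Condition \eqref{eq:iden}, i.e.\ $\nu_i(\mu_i-n+1)>n/(1+\epsilon)$, makes this strictly below the lower bound $(1-O(\epsilon))$ times the parameter from the second point once $\epsilon$ is small and $i$ is large---the desired contradiction.

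I expect the genuine difficulty to lie exactly in this last point in the regime $\nu_i\to1$, where $H(P_i)$ and $H(Q_i)$ are comparable: there the shifts of $P_i$ are not cheap relative to scale $X_i$, $P_i$ cannot be used alone, and one must keep close track of the heights---inflated only by bounded powers of $X_i$ under $\mathrm{Res}(P_i,Q_i)$ and the various resultant or division operations---as well as handle separately the degenerate cases ($\deg P_i<n$, $P_i\mid P$, or vanishing of the produced polynomial); it is here that the sharp inequality \eqref{eq:iden} is used at full strength. Granting \eqref{eq:nolongere}, the conclusions \eqref{eq:nolonger} follow at once: the existence of arbitrarily large $X=X_i$ for which $H(P)\le X,\ 0<|P(\zeta)|\le X^{-2n+1-\epsilon}$ has no solution of degree $\le 2n-1$ yields $\widehat{w}_{2n-1}(\zeta)\le 2n-1$, hence $\widehat{w}_{2n-1}(\zeta)=2n-1$ by \eqref{eq:wmono}; and $\widehat{\lambda}_{2n-1}(\zeta)=1/(2n-1)$ follows either by running the same argument for the dual simultaneous-approximation system, or more cheaply from the transference between the uniform exponents (which gives $\widehat{w}_m(\zeta)=m\iff\widehat{\lambda}_m(\zeta)=1/m$) together with \eqref{eq:ldiri}.
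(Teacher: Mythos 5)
There is a genuine gap at the heart of your argument. Your plan is to assume a very small $P$ of degree at most $2n-1$ at the scale $X_i$ and to ``descend'' to a polynomial of degree at most $n-1$ contradicting $w_{n-1}(\zeta)=n-1$; but this descent --- the only step where \eqref{eq:iden} actually enters --- is never carried out, and the mechanisms you name (resultant-type manipulations of $P$ with $P_i$, $\mathrm{Res}(P_i,Q_i)$, exterior-algebra relations) do not obviously produce a degree $\leq n-1$ polynomial with controlled height and evaluation: $\mathrm{Res}(P_i,Q_i)$ is an integer, and dividing $P$ by $P_i$ gives no usable height bounds in general. Moreover, the set-up in your second point is too weak to force a contradiction at the relevant scale: near $q=\log X_i$ you only have the $n$ shifts $T^jQ_i$ available as small lattice vectors, since the shifts $T^jP_i$ need not satisfy $|P_i(\zeta)|\leq X_i^{-2n+1+\epsilon}$ when $\nu_i$ is bounded away from $1$ (\eqref{eq:iden} only yields $\nu_i\mu_i> n/(1+\epsilon)+(n-1)\nu_i$, which can lie well below $2n-1$). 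With only $n$ small independent vectors at scale $X_i$, Minkowski's second theorem pins down nothing about $L_1$, and you acknowledge yourself that the remaining regime is ``the genuine difficulty''.

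The missing idea is to use $w_{n-1}(\zeta)=n-1$ \emph{constructively} rather than as an obstruction. By Minkowski's second theorem (Proposition~\ref{leicht} with $k=n-1$), $w_{n-1}(\zeta)=n-1$ forces $\widehat{w}_{n-1,n}(\zeta)=n-1$: at \emph{every} large scale $Y$ there exist $n$ linearly independent polynomials $R_1,\dots,R_n$ of degree at most $n-1$ with $H(R_j)\leq Y$ and $|R_j(\zeta)|\leq Y^{-n+1+\epsilon}$. Taking $Y=X_i/H(P_i)=X_i^{1-\nu_i}$ and forming the products $P_iR_j$ supplies exactly the $n$ missing small directions at scale $X_i$: one has $H(P_iR_j)\ll_n X_i$ and $|P_i(\zeta)R_j(\zeta)|\leq X_i^{-\nu_i\mu_i-(1-\nu_i)(n-1-\epsilon)}$, and \eqref{eq:iden} is precisely the condition making this $\leq X_i^{-2n+1+\epsilon_0}$ with $\epsilon_0\to 0$. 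Together with $T^jQ_i$, $0\leq j\leq n-1$, this gives $2n$ linearly independent polynomials (independence via the irreducibility of $P_i,Q_i$, which follows from $w_{n-1}(\zeta)=n-1<w_n(\zeta)$ and Gelfond's lemma, plus unique factorization) of degree at most $2n-1$, height $\ll_n X_i$ and evaluation $\leq X_i^{-2n+1+\epsilon}$; a single further application of Minkowski's second theorem then rules out any solution of \eqref{eq:nolongere} directly, with no contradiction argument or descent needed. Your deduction of \eqref{eq:nolonger} from \eqref{eq:nolongere} is correct and agrees with the paper.
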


\begin{remark}
If we weaken the assumptions by only prescribing some upper bound less than $2n-1$ 
for $w_{n-1}(\zeta)$ and/or replacing the right hand side in \eqref{eq:iden} by a larger number,
we still obtain upper bounds for $\widehat{w}_{2n-1}(\zeta)$ and $\widehat{\lambda}_{2n-1}(\zeta)$.
Moreover, upon the assumptions of the theorem
the condition \eqref{eq:iden} in fact implies that the 
left hand side of \eqref{eq:iden} must
tend to $1$ as $i\to\infty$. We could equivalently
impose $\vert n\nu_{i}^{-1}(\mu_{i}-n+1)^{-1}-1\vert<\epsilon$
for $i\geq i_{0}(\epsilon)$ instead of \eqref{eq:iden}.
Furthermore the left condition in \eqref{eq:fra} is
in fact redundant as it can be derived from the right condition
in \eqref{eq:fra} combined with \eqref{eq:iden}. 
We do not carry these issues out. 
\end{remark}

We will now carry out how to apply Theorem~\ref{vorb} to 
Sturmian words $\zeta=\zeta_{\varphi}$ and $n=2$   
to deduce \eqref{eq:glm}. 
The method generalizes the proof of~\cite[(12) in Theorem~2.1]{ichlondon} and leads to
a deeper insight why it is true.
We have to check that the assumptions of Theorem~\ref{vorb} are satisfied in this context.
This follows from certain results due to Bugeaud and Laurent~\cite{buglau}.
First we introduce some additional notation.
Let $\zeta_{\varphi}$ be defined  as in \eqref{eq:s} via 
a sequence $(s_{k})_{k\geq 1}$
and let $(m_{k})_{k\geq 1}$ be the corresponding words as in \eqref{eq:m}. Then for $k\geq 1$ let
\begin{equation} \label{eq:eta}
\eta_{k}:=[s_{k+1};s_{k},\ldots,s_{1}], \qquad \alpha_{k}:=[0;m_{k},m_{k},\ldots].
\end{equation}
Every $\alpha_{k}$ is a quadratic irrational number.
Let $W_{k}$ denote its minimal polynomial over $\mathbb{Z}[T]$ with coprime coefficients
and let $H(\alpha_{k})=H(W_{k})$.
Concretely the following was shown in the proofs of~\cite[Corollary~6.1 and Corollary~6.2]{buglau}. 
As usual $a\asymp b$ means
both $a\ll b$ and $b\ll a$ are satisfied everywhere it occurs in the sequel.

\begin{lemma}[Bugeaud, Laurent]  \label{laubug}
Let $\zeta_{\varphi}$ be as above. 
Then
\begin{equation} \label{eq:abstand}
\vert \zeta_{\varphi}-\alpha_{k}\vert \asymp H(\alpha_{k})^{-2-2\eta_{k}},
\end{equation}
and
\begin{equation} \label{eq:jogenau}
\vert W_{k}(\zeta_{\varphi})\vert \asymp H(W_{k})^{-1-2\eta_{k}}.
\end{equation}
Moreover
\[
\vert \zeta_{\varphi}-\alpha_{k}\vert \asymp H(\alpha_{k})^{-1} H(\alpha_{k+1})^{-2-\frac{1}{\eta_{k}}},
\]
and 
\begin{equation} \label{eq:sieg}
\vert W_{k}(\zeta_{\varphi})\vert \asymp H(W_{k+1})^{-2-\frac{1}{\eta_{k}}}.
\end{equation}
\end{lemma}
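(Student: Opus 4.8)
\textbf{Proof proposal for Lemma~\ref{laubug}.}
The plan is to derive all four estimates from the explicit description of the continued fraction expansions of $\zeta_\varphi$, $\alpha_k$ and the algebraic data attached to $\alpha_k$, together with the standard metric theory of continued fractions. First I would recall that if a real number $\xi$ has convergents $p_j/q_j$, then $|\xi - p_j/q_j| \asymp q_j^{-1}(q_{j+1})^{-1} \asymp q_j^{-2}\cdot[a_{j+1};a_{j+2},\ldots]^{-1}$, and that $q_{j+1}/q_j = [a_{j+1};a_j,\ldots,a_1]$ up to bounded multiplicative error. Applying this to $\zeta_\varphi = [0;m_\varphi]$: the prefix $m_k$ of $m_\varphi$ has length $|m_k|$, and the partial quotient immediately following this prefix, read together with the tail, is governed by $\eta_k = [s_{k+1};s_k,\ldots,s_1]$ because of the recursion $m_{k+1}=m_k^{s_{k+1}}m_{k-1}$ in~\eqref{eq:m}. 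So the denominator $q_{|m_k|}$ of the corresponding convergent of $\zeta_\varphi$ satisfies $\log q_{|m_k|} \asymp |m_k|$, and the next "jump" in the denominators is by a factor $\asymp q_{|m_k|}^{\,2\eta_k}$ or so; this is the quantitative input I would extract verbatim from~\cite[Corollary~6.1 and Corollary~6.2]{buglau}.

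Next I would identify $H(\alpha_k)$ and $H(W_k)$ with $q_{|m_k|}$ up to bounded powers. Since $\alpha_k = [0;\overline{m_k}]$ is purely periodic with period $m_k$, its minimal polynomial $W_k$ over $\mathbb{Z}$ has coefficients that are polynomial expressions in the convergent denominators of one period, whence $H(W_k) \asymp q_{|m_k|}^{c}$ for an absolute constant $c$ (in fact $c$ close to $1$ after normalizing); the precise bookkeeping is exactly what is done in the proofs of the two corollaries of Bugeaud--Laurent being cited, so I would simply quote it. Having $H(\alpha_k)=H(W_k) \asymp q_{|m_k|}$ (up to bounded powers, which I will absorb into the $\asymp$), the estimate~\eqref{eq:abstand} follows from the convergent estimate above: $\zeta_\varphi$ and $\alpha_k$ share the first $|m_k|$ partial quotients, so $|\zeta_\varphi-\alpha_k| \asymp q_{|m_k|}^{-2}\cdot(\text{next partial quotient block})^{-1} \asymp H(\alpha_k)^{-2-2\eta_k}$, because the size of the divergence block is $\asymp q_{|m_k|}^{2\eta_k}$ by the definition of $\eta_k$. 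For~\eqref{eq:jogenau}, I would use the standard identity $|W_k(\zeta_\varphi)| \asymp |\alpha_k - \zeta_\varphi|\cdot H(W_k) \cdot |W_k'(\alpha_k)|/H(W_k)$, noting that for a quadratic $W_k$ the derivative $|W_k'(\alpha_k)|$ is comparable to $H(W_k)$ times the distance between the two conjugate roots, which here is $\asymp 1$ (the roots of $\overline{m_k}$-type quadratics stay a bounded distance apart). Hence $|W_k(\zeta_\varphi)| \asymp H(W_k)\cdot|\zeta_\varphi-\alpha_k| \asymp H(W_k)^{1}\cdot H(W_k)^{-2-2\eta_k} = H(W_k)^{-1-2\eta_k}$.

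For the last two estimates I would pass from the parameter $\eta_k$ at level $k$ to the height $H(\alpha_{k+1})$ at level $k+1$. Using $m_{k+1}=m_k^{s_{k+1}}m_{k-1}$ one gets $|m_{k+1}| = s_{k+1}|m_k| + |m_{k-1}|$, and via the continued fraction recursion $\log H(\alpha_{k+1}) \asymp |m_{k+1}| \asymp \eta_k \cdot |m_k| + |m_k|\cdot(\text{lower order})$; more precisely $H(\alpha_{k+1}) \asymp H(\alpha_k)^{\,\eta_k+\eta_k^{-1}+O(1)}$ type relations hold, but the clean statement I actually need is $H(\alpha_{k+1}) \asymp H(\alpha_k)\cdot H(\alpha_k)^{\,\eta_k\cdot(\text{something})}$; the exact exponent is pinned down in~\cite{buglau} by the identity $\eta_k = [s_{k+1};s_k,\ldots,s_1]$ and $|m_{k+1}|/|m_k| \to$ a product telescoping to exactly this. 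Substituting $H(\alpha_k)^{-2-2\eta_k} = H(\alpha_k)^{-1}\cdot H(\alpha_k)^{-1-2\eta_k}$ and then rewriting $H(\alpha_k)^{-1-2\eta_k}$ in terms of $H(\alpha_{k+1})$ via the relation $\log H(\alpha_{k+1}) \asymp (1+2\eta_k)^{-1}$-scaled... — concretely, since $\log H(\alpha_{k+1}) \asymp (2+\eta_k^{-1})^{-1}\cdot(\ldots)$, one lands on $|\zeta_\varphi-\alpha_k| \asymp H(\alpha_k)^{-1}H(\alpha_{k+1})^{-2-1/\eta_k}$ and correspondingly $|W_k(\zeta_\varphi)| \asymp H(W_{k+1})^{-2-1/\eta_k}$. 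I expect the main obstacle to be precisely this change-of-level computation: keeping the logarithmic heights $|m_k|$ and the arithmetic quantities $\eta_k$ aligned so that the exponents $-2-2\eta_k$ and $-2-\eta_k^{-1}$ match up exactly (not just up to bounded additive error in the exponent, which would be fatal since $\eta_k$ is unbounded). The safe route is to cite the proofs of~\cite[Corollaries~6.1 and 6.2]{buglau} for these identities rather than re-derive them, since the bookkeeping of the continued fraction algorithm applied to the substitutive word $m_\varphi$ is carried out there in full, and our Lemma~\ref{laubug} is merely a convenient repackaging of those computations.
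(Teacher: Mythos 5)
The paper does not prove this lemma at all: it is stated as a direct quotation of what "was shown in the proofs of \cite[Corollary~6.1 and Corollary~6.2]{buglau}", and your proposal, after sketching the underlying continued-fraction bookkeeping, likewise falls back on citing exactly those proofs, so the approach coincides with the paper's. Your heuristic outline (shared prefix $m_k$ of the expansion, $H(\alpha_k)\asymp q_{|m_k|}$, the role of $\eta_k=[s_{k+1};s_k,\ldots,s_1]$ in the denominator growth, and $|W_k(\zeta_\varphi)|\asymp H(W_k)\,|\zeta_\varphi-\alpha_k|$) is the correct picture of what happens inside the cited proofs, and you rightly identify the level-$k$ to level-$(k+1)$ height comparison as the delicate step that must be taken from \cite{buglau} rather than re-derived loosely.
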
 

Since $\eta_{k}$ corresponds to $1/\sigma$ by \eqref{eq:sigmarder},
Lemma~\ref{laubug} readily yields the lower bounds for the exponents in Theorem~\ref{buglauthm}.
For the upper bounds Liouville inequality was applied to exclude better quadratic approximations, 
see the proofs of~\cite[Corollary~6.1 and Corollary~6.2]{buglau}. It was shown that
there are no algebraic $\alpha$ of degree at most two and height less than $H(\alpha_{k+1})$
closer to $\zeta$ than $\alpha_{k}$. Similarly $\vert P(\zeta_{\varphi})\vert$ is minimized
among all linear or quadratic $P\in\mathbb{Z}[T]$ of height less than $H(W_{k+1})$ for $P=W_{k}$.
We will apply this in the proof of Theorem~\ref{sternchen}. In fact 
the proof of Theorem~\ref{neu} will confirm it, though.
We show that the conditions of Theorem~\ref{vorb}
are satisfied for $\zeta_{\varphi}$ with the polynomials given by
\[
P_{i}= W_{i}, \qquad Q_{i}=W_{i+1}, 
\]
with $W_{i}$ as above. 
Observe that these specializations lead to
\begin{equation} \label{eq:obenauf}
\mu_{i}=1+2\eta_{i}+o(1), \qquad \nu_{i}=\frac{2+\frac{1}{\eta_{i}}}{1+2\eta_{i}}+o(1)=\frac{1}{\eta_{i}}+o(1) 
\end{equation}
by \eqref{eq:jogenau} and \eqref{eq:sieg}. 
It follows from $\eta_{k}>1$ that \eqref{eq:fra} is satisfied and
\eqref{eq:iden} can be readily checked for $n=2$ with \eqref{eq:obenauf} as well. 
The remaining conditions are obviously satisfied.
Thus Theorem~\ref{vorb} implies the following.

\begin{corollary} \label{folgt}
Let $\zeta=\zeta_{\varphi}$ be as above. Then for every $\epsilon>0$ 
and large $i\geq i_{0}(\epsilon)$ with $X_{i}=H(W_{i})$ the inequality 
\eqref{eq:nolongere} for $n=2$ has no solution $P\in\mathbb{Z}[T]$ of degree at most $3$. In particular we deduce \eqref{eq:glm} 
a consequence of Theorem~\ref{vorb}.
\end{corollary}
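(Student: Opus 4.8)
The plan is to verify, one by one, that the five bulleted hypotheses of Theorem~\ref{vorb} hold for $\zeta=\zeta_{\varphi}$ with $n=2$ and the explicit choice $P_i=W_i$, $Q_i=W_{i+1}$, and then simply invoke that theorem. First I would record the preliminary fact $w_1(\zeta)=1$: this is immediate from the transcendence of $\zeta_\varphi$ together with $w_1=\lambda_1$ and the classical fact that a real number with $w_1>1$ is either rational or a Liouville-type number with unbounded partial quotients, whereas $\zeta_\varphi$ has bounded partial quotients (the $s_k$ are the continued fraction entries of $1/\varphi$ reshuffled, but in any case the letters of $m_\varphi$ lie in $\{a,b\}$, so the partial quotients of $\zeta_\varphi$ are bounded); alternatively $w_1(\zeta)=1$ follows from the case $n=2$ of Theorem~\ref{buglauthm} since $w_1\le w_2<\infty$ is not enough, so I would spell out the bounded-partial-quotient argument. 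Thus the hypothesis $w_{n-1}(\zeta)=w_1(\zeta)=1=n-1$ is met.

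Next I would check the structural bullets. Linear independence of $\{W_i,W_{i+1}\}$ holds because the $\alpha_k$ are pairwise distinct quadratic irrationals (distinct eventually periodic continued fractions), so their minimal polynomials are pairwise non-proportional. The strict monotonicity $H(W_1)<H(W_2)<\cdots$ and the inequality $H(P_i)=H(W_i)<H(W_{i+1})=H(Q_i)$ both follow from the standard growth of the heights $H(\alpha_k)=H(W_k)$, which grow like the denominators $q_{|m_k|}$ of the convergents of $\zeta_\varphi$ at the positions $|m_k|$; since $|m_{k+1}|>|m_k|$ these heights are strictly increasing in $k$ for $k$ large (and one can pass to a tail). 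For the size condition \eqref{eq:fra} with $2n-1=3$, I would use \eqref{eq:jogenau}: $|W_k(\zeta_\varphi)|\asymp H(W_k)^{-1-2\eta_k}$ with $\eta_k=[s_{k+1};s_k,\dots,s_1]>1$, hence the exponent $1+2\eta_k>3$, so for large $i$ we get $|W_i(\zeta_\varphi)|\le H(W_i)^{-3+\epsilon}$ and likewise for $W_{i+1}$; here I would note that we only need this along the relevant tail of indices.

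The quantitative bullet \eqref{eq:iden} is the one that requires a short computation, and it is the only place where the hypothesis $\sigma>0$ versus $\sigma=0$ could in principle interfere — but in fact it does not, because we only use $\eta_k>1$. From \eqref{eq:jogenau} we read off $\mu_i=1+2\eta_i+o(1)$, and from \eqref{eq:jogenau} and \eqref{eq:sieg} together we get $\nu_i=\log H(W_i)/\log H(W_{i+1})$, which the relation $|W_i(\zeta)|\asymp H(W_i)^{-1-2\eta_i}\asymp H(W_{i+1})^{-2-1/\eta_i}$ forces to equal $(2+1/\eta_i)/(1+2\eta_i)+o(1)=1/\eta_i+o(1)$ (the last identity using $\eta_i\eta_i^{-1}=1$). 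Plugging into the left side of \eqref{eq:iden} with $n=2$:
\[
\frac{n}{\nu_i(\mu_i-n+1)}=\frac{2}{(1/\eta_i)(2\eta_i)}+o(1)=1+o(1),
\]
so \eqref{eq:iden} holds for all large $i$. With all five bullets verified, Theorem~\ref{vorb} applies and yields precisely \eqref{eq:nolongere} for $n=2$ and the consequence $\widehat{w}_3(\zeta)=3$, $\widehat{\lambda}_3(\zeta)=1/3$, which is \eqref{eq:glm}. The main obstacle, such as it is, is purely bookkeeping: making sure the $o(1)$ terms coming from the $\asymp$ in Lemma~\ref{laubug} are uniform enough that a single threshold $i_0(\epsilon)$ works, and confirming that $w_1(\zeta_\varphi)=1$ cleanly — neither is deep, but both must be stated.
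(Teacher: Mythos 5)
Your proposal is correct and follows essentially the same route as the paper: specialize Theorem~\ref{vorb} to $n=2$ with $P_i=W_i$, $Q_i=W_{i+1}$, read off $\mu_i=1+2\eta_i+o(1)$ and $\nu_i=1/\eta_i+o(1)$ from \eqref{eq:jogenau} and \eqref{eq:sieg}, and use $\eta_k>1$ to verify \eqref{eq:fra} and \eqref{eq:iden}. You are in fact slightly more careful than the paper (which dismisses the remaining bullets as ``obviously satisfied''), e.g.\ in spelling out $w_1(\zeta_\varphi)=1$ via bounded partial quotients and the linear independence of $\{W_i,W_{i+1}\}$.
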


For the proof \eqref{eq:equ} in Section~\ref{proofs} we will
recall the general assertion of Corollary~\ref{folgt}, in
particular that $X_{i}$ for the conclusion \eqref{eq:glm}
can be chosen of the special form $H(W_{i})$.
In fact we will need the following fact evolving from 
the proof of Theorem~\ref{vorb} in combination with 
the above observations: for $\epsilon>0$ and 
large $i\geq i_{0}(\epsilon)$ there exist $R_{1}(T),R_{2}(T)$ linear integer polynomials such that 
the derived polynomials 
\[
\{G_{1,i},G_{2,i},G_{3,i},G_{4,i}\}=\{R_{1}W_{i},R_{2}W_{i},W_{i+1},TW_{i+1}\}
\]
span the four-dimensional space of polynomials of degree at most three,
and with $X_{i}:=H(W_{i+1})$ satisfy
\[
\max_{1\leq j\leq 4} H(G_{j,i})\ll X_{i}, \qquad \max_{1\leq j\leq 4} \vert G_{j,i}(\zeta)\vert \leq X_{i}^{-3+\epsilon}.
\]
The proof of \eqref{eq:equ} requires some more preparation from Section~\ref{para} below. 

\subsection{An identity for Sturmian continued fractions}

At this point we want to enclose some remarks on the proof in the last section.
A special case of~\cite[Theorem~2.2]{buschl} shows that for any real number $\zeta$
which satisfies the condition $w_{n}(\zeta)> w_{n-1}(\zeta)$, we have
\begin{equation} \label{eq:varphi}
\widehat{w}_{n}(\zeta)\leq \frac{nw_{n}(\zeta)}{w_{n}(\zeta)-n+1}. 
\end{equation}
We do not know whether the assumption $w_{n}(\zeta)> w_{n-1}(\zeta)$ is really necessary 
for the conclusion or not, the analogue of \eqref{eq:varphi} for the exponents 
$w_{n}^{\ast}, \widehat{w}_{n}^{\ast}$ holds unconditionally~\cite[Theorem~2.4]{buschl}.
See also~\cite{ichcommapp} for an unconditioned weaker estimate in the flavor of \eqref{eq:varphi}.
Inserting the values of Theorem~\ref{buglauthm}, we see that for $n=2$ the Sturmian continued fractions $\zeta_{\varphi}$
provide equality in \eqref{eq:varphi}. Our proof of~\eqref{eq:glm} relied on a parametric version of this identity,
concretely \eqref{eq:iden}, that arose from Lemma~\ref{laubug}.
Indeed, if we identify $\mu_{i}$ with $w_{2}(\zeta_{\varphi})$ and
$\nu_{i}$ with $\widehat{w}_{2}(\zeta_{\varphi})/w_{2}(\zeta_{\varphi})$,
then \eqref{eq:iden} becomes the reverse inequality
to \eqref{eq:varphi} as $\epsilon\to 0$. We see that in fact 
the quotients $n/(\nu_{i}(\mu_{i}-n+1))$ in \eqref{eq:iden} converge to $1$ as $i\to\infty$ for $\zeta_{\varphi}$. 

We briefly discuss equality in \eqref{eq:varphi} for $n>2$.
By \eqref{eq:wmono}, equality obviously occurs for any $U_{n}$-number in Mahler's classification of real numbers, 
that is a transcendental real number which
satisfies $w_{n}(\zeta)=\infty$ and $n$ is the smallest such index. 
The existence
of $U_{n}$-numbers of any prescribed degree $n\geq 1$ was constructively proved by LeVeque~\cite{leveque}.
Most likely equality will hold for all (or at least some) $U_{m}$-numbers with $m<n$ as well. 
For $m=1$ this is true as a consequence of~\cite[Theorem~1.12]{schlei}.
However, for $n\geq 3$ and $w_{n}(\zeta)<\infty$ (or $\widehat{w}_{n}(\zeta)>n$),
it is completely open whether equality in \eqref{eq:varphi} can occur.
In fact it is not even known if there exist transcendental real
$\zeta$ with $\widehat{w}_{n}(\zeta)>n$ for some $n\geq 3$.
A necessary condition for equality in \eqref{eq:varphi} is $w_{n}(\zeta)\geq h_{n}$, where 
$h_{n}>2n-1$ is given by
\[
h_{n}:= \frac{1}{2}\left(\frac{1+2n\sqrt{n^2-2n+\frac{5}{4}}}{n-1}+2n-1 \right),
\]
and even a slightly larger bound can be given when $n=3$. 
This follows from the proof of~\cite[Theorem~2.1]{buschl} based on the fact that otherwise smaller upper bounds
for $\widehat{w}_{n}(\zeta)$ are obtained by Schmidt and Summerer~\cite{ssch}.
Assuming a conjecture of Schmidt and Summerer~\cite{sums} the bound $h_{n}$ can be replaced 
by a larger one in case of $n\geq 4$ as well. 
See~\cite[Section~5.1]{ichglasgow}, in particular~\cite[(33)]{ichglasgow}, 
on how to derive concrete numerical bounds smaller than $h_{n}$. 

\section{Parametric geometry of numbers} \label{para}

We recall the concepts of the parametric geometry of numbers for the proof of \eqref{eq:equ}.
We follow Schmidt and Summerer~\cite{ss},~\cite{ssch}, very 
similarly as in~\cite{ichlondon}.
Let $\zeta\in{\mathbb{R}}$ and an integer $n\geq 1$ be given, and let
$Q>1$ be a parameter. For $1\leq j\leq n+1$,
define $\psi_{n,j}(Q)$ as the minimum of $\eta\in{\mathbb{R}}$ such that
\[
\vert x\vert \leq Q^{1+\eta}, \qquad \max_{1\leq j\leq n} \vert \zeta^{j}x-y_{j}\vert\leq Q^{-\frac{1}{n}+\eta} 
\]
has (at least) $j$ linearly independent solutions in integer 
vectors $(x,y_{1},\ldots,y_{n})$. It is easy to verify 
that they are non-decreasing in the second parameter and
the possible range of every $\psi_{n,j}$ for $Q\in(1,\infty)$ is given by
\[
-1\leq \psi_{n,1}(Q)\leq \psi_{n,2}(Q)\leq \cdots \leq
\psi_{n,n+1}(Q)\leq
\frac{1}{n}.
\]
Define the derived quantities  
\[
\underline{\psi}_{n,j}=\liminf_{Q\to\infty} \psi_{n,j}(Q),
\qquad \overline{\psi}_{n,j}=\limsup_{Q\to\infty} \psi_{n,j}(Q).
\]
These quantities obviously all lie in the interval $[-1,1/n]$. In this setting,
Dirichlet's Theorem becomes
$\psi_{n,1}(Q)<0$ for all $Q>1$, thus $\overline{\psi}_{n,1}\leq 0$.
Now we define the dual functions $\psi_{n,j}^{\ast}(Q)$
from~\cite{ss}. For $1\leq j\leq n+1$ and a parameter $Q>1$, 
let $\psi_{n,j}^{\ast}(Q)$ 
be the minimum of $\eta\in{\mathbb{R}}$ such that
\[
H(P) \leq Q^{\frac{1}{n}+\eta}, \qquad 
\vert P(\zeta)\vert\leq Q^{-1+\eta} 
\]
has (at least) $j$ linearly independent solutions in 
polynomials $P\in\mathbb{Z}[T]$ of degree at most $n$. 
The range for these functions is given by
\begin{equation} \label{eq:linkda}
-\frac{1}{n}\leq \psi_{n,1}^{\ast}(Q)\leq \psi_{n,2}^{\ast}(Q)
\leq \cdots \leq \psi_{n,n+1}^{\ast}(Q)\leq 1.
\end{equation}
Again we consider
\[
\underline{\psi}_{n,j}^{\ast}=\liminf_{Q\to\infty} \psi_{n,j}^{\ast}(Q),
\qquad \overline{\psi}_{n,j}^{\ast}=\limsup_{Q\to\infty} \psi_{n,j}^{\ast}(Q),
\]
which take values in $[-1/n,1]$. 
Moreover $\overline{\psi}_{n,1}^{\ast}\leq 0$ follows again
from Dirichlet's Theorem.
For transcendental real $\zeta$, Schmidt 
and Summerer~\cite[(1.11)]{ssch} established the inequalities 
\[
j\underline{\psi}_{n,j}+(n+1-j)\overline{\psi}_{n,n+1}\geq 0, 
\qquad j\underline{\psi}_{n,j}+(n+1-j)\overline{\psi}_{n,n+1}\geq 0,
\]
for $1\leq j\leq n+1$. The dual inequalities
\begin{equation} \label{eq:duales}
j\underline{\psi}_{n,j}^{\ast}+(n+1-j)\overline{\psi}_{n,n+1}^{\ast}\geq 0, 
\qquad j\underline{\psi}_{n,j}^{\ast}+(n+1-j)\overline{\psi}_{n,n+1}^{\ast}\geq 0,
\end{equation}
hold as well for the same reason, as pointed out in~\cite{ichlondon}.
Mahler's duality implies
\begin{equation} \label{eq:jaja}
\underline{\psi}_{n,j}=-\overline{\psi}_{n,n+2-j}^{\ast}, \qquad 
\overline{\psi}_{n,j}=-\underline{\psi}_{n,n+2-j}^{\ast},
\qquad 1\leq j\leq n+1.
\end{equation}
For $q=\log Q>0$ we further define the derived functions
\begin{equation} \label{eq:lpsi}
L_{n,j}(q)= q\psi_{n,j}(Q), \qquad L_{n,j}^{\ast}(q)= q\psi_{n,j}^{\ast}(Q).
\end{equation}
These functions are piecewise linear with slopes among $\{-1,1/n\}$ and $\{-1/n,1\}$ respectively.
Further for any $\underline{x}=(x,y_{1},\ldots,y_{n})\in\mathbb{Z}^{n+1}$ define the function
\begin{equation} \label{eq:netto}
L_{\underline{x}}(q)=\max \left\{ \log \vert x\vert-q, 
\max_{1\leq j\leq n} \log \vert \zeta^{j}x-y_{j}\vert+\frac{q}{n}\right\}.
\end{equation}
Then by construction $L_{n,j}(q)=L_{\underline{x}}(q)$ for every $q>0$ and some uniquely defined $\underline{x}=\underline{x}(j,q)$.
In case of $j=1$, the vector $\underline{x}$ is chosen such that the expression in \eqref{eq:netto} is minimized among all non-zero vectors.
Similarly, any $L_{n,j}^{\ast}$ coincides at any point $q$ with some
\begin{equation} \label{eq:incide}
L_{P}^{\ast}(q)=\max \left\{ \log H(P)-\frac{q}{n}, 
\log \vert P(\zeta)\vert+q \right\}  
\end{equation}
for some $P\in\mathbb{Z}[T]$ of degree at most $n$.
Minkowski's second lattice point Theorem translates into
\begin{equation} \label{eq:lsumme}
\left\vert \sum_{j=1}^{n+1} L_{n,j}(q)\right\vert \ll 1, \qquad 
\left\vert \sum_{j=1}^{n+1} L_{n,j}^{\ast}(q)\right\vert \ll 1.
\end{equation}
Hence in any interval $I=(q_{1},q_{2})$, the sums of the differences
$L_{n,j}(q_{2})-L_{n,j}(q_{1})$ and $L_{n,j}^{\ast}(q_{2})-L_{n,j}^{\ast}(q_{1})$ over $1\leq j\leq n+1$,
are bounded in absolute value.

We relate $\psi_{n,j}(Q), \psi_{n,j}^{\ast}(Q)$ and the quantities derived from them
to another kind of successive minima approximation exponents,
which generalize the exponents $w_{n}, \widehat{w}_{n}, \lambda_{n}, \widehat{\lambda}_{n}$ from Section~\ref{sek1}.
Fix $n\geq 1$ and let $1\leq j\leq n+1$. Let $w_{n,j}(\zeta)$ and $\widehat{w}_{n,j}(\zeta)$ be the supremum of  
$w\in{\mathbb{R}}$ such that \eqref{eq:w}
has (at least) $j$ linearly independent polynomial solutions 
$P(T)\in\mathbb{Z}[T]$ of degree at most $n$
for arbitrarily large $X$, and all large $X$, respectively.
Similarly, define $\lambda_{n,j}(\zeta)$ and $\widehat{\lambda}_{n,j}(\zeta)$ as
the supremum of $\lambda\in{\mathbb{R}}$ such that \eqref{eq:lambda}
has (at least) $j$ linearly independent solutions $(x,y_{1},y_{2},\ldots, y_{n})\in{\mathbb{Z}^{n+1}}$ 
for arbitrarily large $X$, and all large $X$, respectively. For $j=1$
just the classic exponents are obtained.
The indicated relation between 
$\psi_{n,j}(Q), \psi_{n,j}^{\ast}(Q)$ 
and the above successive minima exponents essentially
established in~\cite{ss}
is given by
\begin{equation} \label{eq:umrechnen}
(1+\lambda_{n,j}(\zeta))(1+\underline{\psi}_{n,j})=
(1+\widehat{\lambda}_{n,j}(\zeta))(1+\overline{\psi}_{n,j})=\frac{n+1}{n}, \qquad 1\leq j\leq n+1,
\end{equation}
and 
\begin{equation} \label{eq:umrechnen2}
(1+w_{n,j}(\zeta))\Big(\frac{1}{n}+\underline{\psi}_{n,j}^{\ast}\Big)=
(1+\widehat{w}_{n,j}(\zeta))\Big(\frac{1}{n}+\overline{\psi}_{n,j}^{\ast}\Big)=\frac{n+1}{n}, \qquad 1\leq j\leq n+1.
\end{equation}
As already quoted in~\cite{ichlondon} from \eqref{eq:jaja}, \eqref{eq:umrechnen} and \eqref{eq:umrechnen2} 
one may deduce
\begin{equation} \label{eq:debre}
\lambda_{n,j}(\zeta)= \frac{1}{\widehat{w}_{n,n+2-j}(\zeta)}, \qquad
\widehat{\lambda}_{n,j}(\zeta)= \frac{1}{w_{n,n+2-j}(\zeta)}, \qquad\quad  1\leq j\leq n+1.
\end{equation}
We recall~\cite[Lemma~3.3]{ichlondon} in a slightly modified form.
\begin{lemma} \label{technisch}
Let $\zeta$ be a real transcendental number.
Let $P,Q\in\mathbb{Z}[T]$ be of large heights, $R=PQ$ and suppose $R$ has degree at most three. 
Define the functions $L_{P}^{\ast}, L_{R}^{\ast}$
as in \eqref{eq:incide} with respect to $n=3$.
Assume that $(q_{1},L_{P}^{\ast}(q_{1}))$ is the local minimum of 
$L_{P}^{\ast}$ and $(q_{2},L_{R}^{\ast}(q_{2}))$ the local minimum 
of $L_{R}^{\ast}$.
Further assume 
\begin{equation} \label{eq:nicht}
\vert Q(\zeta)\vert\asymp H(Q)^{-1}.
\end{equation}
Then, we have
\begin{equation} \label{eq:convers}
\frac{L_{R}^{\ast}(q_{2})-L_{P}^{\ast}(q_{1})}{q_{2}-q_{1}}=\frac{1}{3}+o(1),  \qquad\qquad H(Q)\to\infty.
\end{equation}
\end{lemma}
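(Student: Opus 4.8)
The plan is to compute the two local minima of $L_P^\ast$ and $L_R^\ast$ explicitly in terms of $\log H(P)$, $\log H(Q)$ and $\log|P(\zeta)|$, using the fact that under \eqref{eq:nicht} the polynomial $Q$ contributes essentially nothing to the approximation: since $|Q(\zeta)|\asymp H(Q)^{-1}$, the function $L_Q^\ast(q)=\max\{\log H(Q)-q/3,\ \log|Q(\zeta)|+q\}$ has its local minimum at a height $q$ where $\log H(Q)-q/3 = \log|Q(\zeta)|+q$, i.e. at $q\asymp \tfrac34(\log H(Q)-\log|Q(\zeta)|)\asymp \tfrac32\log H(Q)$ up to $O(1)$, with minimal value $\asymp \tfrac34\log H(Q)$. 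First I would recall the standard multiplicativity estimates $H(R)\asymp H(P)H(Q)$ and $|R(\zeta)|=|P(\zeta)||Q(\zeta)|\asymp |P(\zeta)|H(Q)^{-1}$ (Gelfond's lemma for the height; the evaluation is exact up to the $\asymp$ in \eqref{eq:nicht}), so that on the logarithmic scale $\log H(R)=\log H(P)+\log H(Q)+O(1)$ and $\log|R(\zeta)|=\log|P(\zeta)|-\log H(Q)+O(1)$.

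Next I would locate $(q_1,L_P^\ast(q_1))$: the local minimum of $L_P^\ast$ occurs where $\log H(P)-q_1/3=\log|P(\zeta)|+q_1$, so $q_1=\tfrac34(\log H(P)-\log|P(\zeta)|)$ and $L_P^\ast(q_1)=\log H(P)-q_1/3=\tfrac14\log H(P)+\tfrac34(-\log|P(\zeta)|)$, more usefully written $L_P^\ast(q_1)=\tfrac14\big(\log H(P)+3(-\log|P(\zeta)|)\big)$ — I will keep the $O(1)$ implicit. Doing the same for $R$ with the substituted values gives $q_2=\tfrac34\big(\log H(P)+\log H(Q)-\log|P(\zeta)|+\log H(Q)\big)=q_1+\tfrac32\log H(Q)+O(1)$, and $L_R^\ast(q_2)=\tfrac14\big(\log H(R)+3(-\log|R(\zeta)|)\big)=L_P^\ast(q_1)+\tfrac14\big(\log H(Q)+3\log H(Q)\big)+O(1)=L_P^\ast(q_1)+\log H(Q)+O(1)$. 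Therefore
\begin{equation*}
\frac{L_R^\ast(q_2)-L_P^\ast(q_1)}{q_2-q_1}=\frac{\log H(Q)+O(1)}{\tfrac32\log H(Q)+O(1)}=\frac{2}{3}+o(1),
\end{equation*}
which would contradict the claimed value $\tfrac13$; so I would instead double-check the slopes of $L^\ast$ — they lie in $\{-1/3,1\}$, the decreasing branch $\log H(P)-q/3$ has slope $-1/3$ — and recompute, finding $q_1=\tfrac34(\log H(P)-\log|P(\zeta)|)$ but $L_P^\ast(q_1)=\log|P(\zeta)|+q_1=\log|P(\zeta)|+\tfrac34\log H(P)-\tfrac34\log|P(\zeta)|=\tfrac34\log H(P)+\tfrac14\log|P(\zeta)|$; the arithmetic is the same point written two ways, and the ratio of increments is what must be tracked carefully. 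The correct bookkeeping is: $q_2-q_1$ equals $\tfrac34$ times the change in $(\log H-\log|\cdot(\zeta)|)$, which is $\tfrac34(\log H(Q)-(-\log H(Q)))=\tfrac32\log H(Q)$; while $L_R^\ast(q_2)-L_P^\ast(q_1)$, read off the \emph{ascending} branch $\log|\cdot(\zeta)|+q$, equals $(\log|R(\zeta)|-\log|P(\zeta)|)+(q_2-q_1)=-\log H(Q)+\tfrac32\log H(Q)=\tfrac12\log H(Q)$, giving ratio $\tfrac13$. This is the heart of the argument and I would present it in exactly this form.

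The remaining point is rigor about the $O(1)$ errors: since $\log H(Q)\to\infty$ (the heights are ``large'' and in the application $H(Q)=H(W_{i+1})\to\infty$), the $O(1)$ terms are absorbed into the $o(1)$, and the $\asymp$ in \eqref{eq:nicht} only affects $q_1,q_2$ and the two values by $O(1)$, hence does not affect the limit. I would also note that $L_P^\ast$ and $L_R^\ast$ genuinely attain these as \emph{local} minima (the unique vertex of each tent function), which is automatic from the piecewise-linear shape of \eqref{eq:incide}, and that $q_1<q_2$ since $\log H(Q)>0$, so the quotient in \eqref{eq:convers} is well defined. The main obstacle is purely the sign-and-slope bookkeeping just carried out — there is no deeper difficulty — so the write-up should be careful to state which branch ($-q/3$ or $+q$) each minimal value is read from, so that the cancellation producing $\tfrac13$ rather than $\tfrac23$ is transparent.
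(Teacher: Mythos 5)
Your final computation is correct and is essentially the argument the paper intends: the paper omits the proof, deferring to the analogous \cite[Lemma~3.3]{ichlondon}, which is exactly this direct computation of the two vertices of the tent functions via Gelfond's lemma $\log H(R)=\log H(P)+\log H(Q)+O(1)$ and the hypothesis $\vert Q(\zeta)\vert\asymp H(Q)^{-1}$, with the $O(1)$ errors absorbed because $q_{2}-q_{1}\asymp\log H(Q)\to\infty$. In the write-up you should delete the first pass entirely: the value $\tfrac{1}{4}\log H(P)-\tfrac{3}{4}\log\vert P(\zeta)\vert$ you first assign to $L_{P}^{\ast}(q_{1})$ is an arithmetic slip rather than ``the same point written two ways'' (the correct value is $\tfrac{3}{4}\log H(P)+\tfrac{1}{4}\log\vert P(\zeta)\vert$, as you find on the second pass), and only the corrected bookkeeping giving $\bigl(\tfrac{1}{2}\log H(Q)\bigr)\big/\bigl(\tfrac{3}{2}\log H(Q)\bigr)=\tfrac{1}{3}$ should remain.
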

We omit the proof since it is very similar to the one 
of~\cite[Lemma~3.3]{ichlondon}. Our error term is even smaller 
as a consequence
of the stronger condition \eqref{eq:nicht} instead
of $\vert Q(\zeta)\vert\leq  H(Q)^{-1+\delta}$ for small $\delta$. 
The stronger condition will be satisfied in our applications
to the numbers $\zeta_{\varphi}$ for the linear best approximation
polynomials $Q$, since $\zeta_{\varphi}$ are
badly approximable with respect to one-dimensional approximation. 
This property is unknown for extremal numbers,
only the weaker claim
that $\vert\zeta-y/x\vert\gg x^{-2} \max\{2,\log x\}^{-c}$ for some $c=c(\zeta)\geq 0$ 
and all rational numbers $y/x$ was shown by Roy~\cite[Theorem~1.3]{royyy}.

\section{Proofs of Theorems~\ref{vorb},~\ref{neu} and \ref{sternchen}} \label{proofs}

For convenience we first provide an easy proposition, which 
in fact is an immediate consequence 
of Minkowski's second lattice point theorem.

\begin{proposition} \label{leicht}
Let $k\geq 1$ be an integer and $\zeta$ be a real number. Then
the following assertions are equivalent.
\begin{itemize}
\item $w_{k,k+1}(\zeta) \geq k$ 
\item $w_{k,k+1}(\zeta)=k$
\item $\widehat{w}_{k}(\zeta)\leq k$
\item $\widehat{w}_{k}(\zeta)= k$
\item $\widehat{\lambda}_{k}(\zeta)= \frac{1}{k}$
\end{itemize}

\end{proposition}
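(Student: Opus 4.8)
The plan is to establish a cycle of implications among the five assertions, exploiting that the middle three are trivially related and that the outer equivalences come from Minkowski's second theorem. First I would note that (iii) $\Leftrightarrow$ (iv) is immediate from the Dirichlet lower bound $\widehat{w}_k(\zeta)\geq k$ in \eqref{eq:wmono}, and that (iv) $\Leftrightarrow$ (v) is exactly the case $j=1$ of the duality \eqref{eq:debre}, namely $\widehat{\lambda}_{k,1}(\zeta)=1/w_{k,k+1}(\zeta)$ together with $\lambda_{k,1}(\zeta)=1/\widehat{w}_{k,k+1}(\zeta)$; more precisely, $\widehat{\lambda}_k(\zeta)=\widehat{\lambda}_{k,1}(\zeta)=1/w_{k,k+1}(\zeta)$ and separately $\widehat{\lambda}_k(\zeta)=1/w_{k,k+1}(\zeta)$ only after we know $w_{k,k+1}(\zeta)=k$, so in fact (v) is most cleanly tied to (ii) via \eqref{eq:debre}. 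Likewise (i) $\Leftrightarrow$ (ii) is trivial in one direction and needs an upper bound $w_{k,k+1}(\zeta)\leq k$ in the other.

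The substantive content is therefore the equivalence of the "$k+1$-st successive minimum" statement $w_{k,k+1}(\zeta)\leq k$ with the uniform statement $\widehat{w}_k(\zeta)\leq k$. For this I would translate everything into the parametric language of Section~\ref{para}. Using \eqref{eq:umrechnen2} with $j=k+1$ we have $(1+w_{k,k+1}(\zeta))(\tfrac1k+\underline\psi_{k,k+1}^\ast)=\tfrac{k+1}{k}$, so $w_{k,k+1}(\zeta)=k$ is equivalent to $\underline\psi_{k,k+1}^\ast=1/k$, i.e. to $\psi_{k,k+1}^\ast(Q)\to 1/k$ as $Q\to\infty$ (recalling the upper bound $\psi_{k,k+1}^\ast(Q)\leq 1$ is not sharp here; the relevant ceiling is $1/k$ coming from Minkowski). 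Dually, via \eqref{eq:jaja}, $\underline\psi_{k,k+1}^\ast=1/k$ says $\overline\psi_{k,1}=-1/k$, and by the $j=1$ case of \eqref{eq:umrechnen}, $(1+\widehat\lambda_{k,1}(\zeta))(1+\overline\psi_{k,1})=\tfrac{k+1}{k}$, which gives $\widehat\lambda_k(\zeta)=1/k$ — this is the link to (v). For the link to (iii)/(iv): by \eqref{eq:umrechnen2} with $j=1$, $(1+\widehat w_k(\zeta))(\tfrac1k+\overline\psi_{k,1}^\ast)=\tfrac{k+1}{k}$, so $\widehat w_k(\zeta)=k$ iff $\overline\psi_{k,1}^\ast=0$, and then I invoke Minkowski's second theorem in the form \eqref{eq:lsumme}: if the first dual minimum function $L_{k,1}^\ast(q)$ achieves slope $1$ on arbitrarily long stretches (which is what $\overline\psi_{k,1}^\ast>0$ or rather the failure of $\psi_{k,1}^\ast\to 0$ would allow) then the sum $\sum_j L_{k,j}^\ast$ cannot stay bounded unless the top minimum $L_{k,k+1}^\ast$ compensates with slope $-1/k$ stretches, forcing $\underline\psi_{k,k+1}^\ast<1/k$, i.e. $w_{k,k+1}(\zeta)>k$. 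Running this compensation argument carefully in both directions closes the loop.

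Concretely I would prove: (ii) $\Rightarrow$ (v) and (ii) $\Rightarrow$ (iv) by the translations above; (iv) $\Rightarrow$ (ii) by the Minkowski compensation argument (the one genuine inequality); (v) $\Rightarrow$ (ii) symmetrically via $\overline\psi_{k,1}=-1/k\Rightarrow\underline\psi_{k,k+1}^\ast=1/k$ using \eqref{eq:jaja}; (i) $\Rightarrow$ (ii) because $w_{k,k+1}(\zeta)\geq k$ always holds by Dirichlet (each of the monomials $1,T,\dots,T^k$ evaluated appropriately, or directly from $\psi_{k,k+1}^\ast(Q)\leq 1/k$ giving the reverse bound $w_{k,k+1}(\zeta)\leq k$) — so actually the only content in (i) $\Leftrightarrow$ (ii) is the uniform upper bound $w_{k,k+1}(\zeta)\leq k$, which is again \eqref{eq:lsumme}; and (ii) $\Rightarrow$ (iii) is trivial.

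The main obstacle I anticipate is getting the Minkowski-compensation step \eqref{eq:lsumme} to yield an \emph{equivalence} rather than just one inequality: one must argue that failure of $\psi_{k,1}^\ast(Q)\to 0$ (i.e. $\overline\psi_{k,1}^\ast>0$) genuinely forces $\underline\psi_{k,k+1}^\ast<1/k$. The clean way is to use the full strength of \eqref{eq:umrechnen2} and \eqref{eq:jaja} together with the elementary bound chain $-\tfrac1k\leq\psi_{k,1}^\ast\leq\cdots\leq\psi_{k,k+1}^\ast\leq 1$ from \eqref{eq:linkda} and $|\sum_j\psi_{k,j}^\ast(Q)|\ll 1/q\to 0$: since all but the last term are bounded below by $-1/k$ and the last is bounded above by $1$, if $\overline\psi_{k,1}^\ast>0$ then along the corresponding sequence of $Q$'s we have $\sum_{j=1}^{k} \psi_{k,j}^\ast(Q)$ bounded below by something $\geq \psi_{k,1}^\ast(Q)-(k-1)/k$, forcing $\psi_{k,k+1}^\ast(Q)$ bounded above by $-\psi_{k,1}^\ast(Q)+(k-1)/k+o(1)$, and taking $\liminf$ along a well-chosen subsequence gives $\underline\psi_{k,k+1}^\ast\leq (k-1)/k - \overline\psi_{k,1}^\ast+\cdots < 1/k$ after sharpening; making the subsequence choices compatible is the fiddly part, and it is exactly here that one should cite that these are the standard arguments from \cite{ssch} and \cite{ichlondon} rather than redo them. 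I would present the final write-up as the implication cycle (i) $\Rightarrow$ (ii) $\Rightarrow$ (iii) $\Rightarrow$ (iv) $\Rightarrow$ (v) $\Rightarrow$ (i), flagging that (iv) $\Rightarrow$ (v) $\Rightarrow$ (i) uses \eqref{eq:debre}/\eqref{eq:jaja} and that (iii) $\Rightarrow$ (iv) and (i) $\Rightarrow$ (ii) are the two Minkowski inputs.
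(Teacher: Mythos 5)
Your skeleton is exactly the paper's: group the first, second and fifth assertions via the right identity in \eqref{eq:debre} together with \eqref{eq:ldiri}, get the third and fourth from \eqref{eq:wmono}, and reduce the whole proposition to the single equivalence between $w_{k,k+1}(\zeta)\geq k$ and $\widehat{w}_{k}(\zeta)\leq k$, which is handled by the Minkowski compensation \eqref{eq:lsumme} combined with the translation formulas \eqref{eq:umrechnen2} and \eqref{eq:linkda}. So the approach is the right one and matches the source.

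However, your parametric translations carry a consistent arithmetic slip that would prevent the cycle from closing as written. From \eqref{eq:umrechnen2} with $n=k$, $j=k+1$, the condition $w_{k,k+1}(\zeta)=k$ is equivalent to $\underline{\psi}_{k,k+1}^{\ast}=0$, \emph{not} to $\underline{\psi}_{k,k+1}^{\ast}=1/k$; dually, by \eqref{eq:jaja} this gives $\overline{\psi}_{k,1}=0$, not $-1/k$, and only then does \eqref{eq:umrechnen} return $\widehat{\lambda}_{k}(\zeta)=1/k$ (with your value $\overline{\psi}_{k,1}=-1/k$ the same formula would output $\widehat{\lambda}_{k}(\zeta)=2/(k-1)$). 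Likewise, the failure of $\widehat{w}_{k}(\zeta)=k$ is $\overline{\psi}_{k,1}^{\ast}<0$ — Dirichlet already forces $\overline{\psi}_{k,1}^{\ast}\leq 0$, so the case $\overline{\psi}_{k,1}^{\ast}>0$ you discuss cannot occur — and the "relevant ceiling $1/k$" for $\psi_{k,k+1}^{\ast}$ is not the right normalization either. Once these constants are corrected, the compensation step you worry about being fiddly is in fact a two-line computation and needs no delicate subsequence bookkeeping: if $\psi_{k,k+1}^{\ast}(Q)\leq\epsilon$ for some arbitrarily large $Q$, then by \eqref{eq:linkda} all $k+1$ values are at most $\epsilon$, and \eqref{eq:lsumme} forces $\psi_{k,1}^{\ast}(Q)\geq -k\epsilon+o(1)$, whence $\overline{\psi}_{k,1}^{\ast}=0$; the converse is the same inequality read the other way. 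I would therefore fix the translated values before writing this up, but no new idea is missing.
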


\begin{proof}
It is obvious by \eqref{eq:wmono} that the last two claims are equivalent. From the right relation in
\eqref{eq:debre} we see that $w_{k,k+1}(\zeta)=\widehat{\lambda}_{k}(\zeta)^{-1}$, and together with \eqref{eq:ldiri}
the first two and the last claims are all
equivalent. To finish the proof, it suffices to show that the first
and third relation are equivalent. This is essentially
an application of
Minkowski's second Theorem. If $w_{k,k+1}(\zeta)\geq k$ holds, then 
\eqref{eq:umrechnen2} implies $\underline{\psi}_{k,k+1}^{\ast}\leq 0$.
Hence for certain arbitrarily large $Q$ we have 
$\psi_{k,k+1}^{\ast}(Q)\leq \epsilon$.
It follows from \eqref{eq:linkda}, \eqref{eq:lpsi} and \eqref{eq:lsumme}
that $\psi_{k,1}^{\ast}(Q)\geq -k\epsilon+o(1)$
as $Q\to\infty$. Hence $\overline{\psi}_{k,1}^{\ast}\geq 0$
as $\epsilon$ can be chosen arbitrarily small
(and thus actually $\overline{\psi}_{k,1}^{\ast}= 0$). From
\eqref{eq:umrechnen2} with $j=1$ we further  
deduce $\widehat{w}_{k}(\zeta)\leq k$
as claimed.
The reverse implication is performed by reversing the argument, 
we leave the details to the reader.
%
\end{proof}

We further
recall an estimate often referred to as Gelfond's lemma,
see also~\cite[Hilfssatz~3]{wirsing}. It asserts that
for polynomials $Q_{1},Q_{2}$ with integral coefficients of degree at most $n$, we have
\begin{equation} \label{eq:multipli}
H(Q_{1}Q_{2})\asymp_{n} H(Q_{1})H(Q_{2}).
\end{equation}
Now we can prove Theorem~\ref{vorb}. 

\begin{proof}[Proof of Theorem~\ref{vorb}]
In view of Proposition~\ref{leicht} with $k=2n-1$,
it suffices to show \eqref{eq:nolongere}. Further by Proposition~\ref{leicht}
it suffices to find for certain arbitrarily large real $X_{i}$ a set of
$2n$ linearly independent polynomials $G_{1,i},\ldots,G_{2n,i}\in\mathbb{Z}[T]$ that satisfy
\begin{equation} \label{eq:sfy}
H(G_{j,i})\leq X_{i}, \qquad \vert G_{j,i}(\zeta)\vert \leq X_{i}^{-2n+1+\epsilon},
\qquad 1\leq j\leq 2n, \; i\geq 1.
\end{equation}

Since by assumption $w_{n-1}(\zeta)=n-1<n\leq w_{n}(\zeta)$, for large $i$
the polynomials $P_{i}$ and $Q_{i}$ must have degree 
precisely $n$ and be irreducible. This follows from \eqref{eq:multipli} and the definition
of $w_{n}$ and was essentially carried out in~\cite{wirsing} and already used frequently in~\cite{buschl}.
Let $X_{i}:=H(Q_{i})$. By assumption
\[
H(P_{i})=X_{i}^{\nu_{i}}, \qquad \vert P_{i}(\zeta)\vert =X_{i}^{-\nu_{i}\mu_{i}}.
\]
%
By Proposition~\ref{leicht} with $k=n-1$, our assumption $w_{n-1}(\zeta)=n-1$
yields $\widehat{w}_{n-1,n}(\zeta)=n-1$. In other words for any large parameter $Y\geq Y_{0}(\epsilon)$
there exist linearly independent polynomials $R_{1},\ldots,R_{n}\in\mathbb{Z}[T]$ 
of degree at most $n-1$ that satisfy
\[
H(R_{j})\leq Y, \qquad \vert R_{j}(\zeta)\vert\leq Y^{-n+1+\epsilon}, \qquad 1\leq j\leq n.
\]
Choose $Y=Y_{i}$ with the sequence of parameters
\[
Y_{i}=\frac{X_{i}}{H(P_{i})}= X_{i}^{1-\nu_{i}}.
\]
For $1\leq j\leq n$ and $i\geq 1$ denote by $R_{j,i}$ the polynomials $R_{j}$ above for the parameter $Y_{i}$.
We show that the polynomials $P_{j,i}:= P_{i}R_{j,i}$ 
provide $n$ of the $2n$ polynomials in \eqref{eq:sfy}
with respect to the parameter $X_{i}$. Clearly each $P_{j,i}$ has degree at most $n+(n-1)=2n-1$.
Moreover by construction and \eqref{eq:multipli} we have
\[
H(P_{j,i})\ll_{n} H(P_{i})\cdot H(R_{j,i})\leq X_{i}, \qquad\qquad  1\leq j\leq n,\; i\geq 1.
\]
The evaluations of the $P_{j,i}$ at $\zeta$ can be estimated via
\begin{equation}  \label{eq:fuchs}
\vert P_{j,i}(\zeta)\vert= \vert P_{i}(\zeta)\vert\cdot \vert R_{j,i}(\zeta)\vert
\leq X_{i}^{-\nu_{i}\mu_{i}}\cdot X_{i}^{(1-\nu_{i})(-n+1+\epsilon)}, \qquad 1\leq j\leq n,\; i\geq i_{1}.
\end{equation}
By assumption \eqref{eq:iden} of the theorem we infer
\[
\vert P_{j,i}(\zeta)\vert \leq X_{i}^{-2n+1+\epsilon_{0}}, \qquad\qquad 1\leq j\leq n,\; i\geq i_{1},
\]
for
$\epsilon_{0}=n-\frac{n}{1+\epsilon}+\epsilon(1-\nu_{i})$,
which tends to $0$ as $\epsilon$ does.
For the remaining $n$ polynomials we 
take $Q_{j,i}(T)=T^{j-1}Q_{i}(T)$ for $1\leq j\leq n$ and $i\geq 1$.  
Obviously $Q_{j,i}$ have degree at most $2n-1$, height $H(Q_{j,i})=H(Q_{i})=X_{i}$ and 
satisfy $\vert Q_{j,i}(\zeta)\vert \ll_{\zeta,n} \vert Q_{i}(\zeta)\vert$ for $1\leq j\leq n$. Hence 
\[
\vert Q_{j,i}(\zeta)\vert \ll_{n,\zeta} X_{i}^{-2n+1+\epsilon}, \qquad 1\leq j\leq n,\; i\geq i_{1}.
\]
Let $\epsilon_{1}=\max\{ \epsilon,\epsilon_{0}\}$. Summing up, 
for certain arbitrarily large $X=X_{i}$ 
we have found $2n$ polynomials 
$\mathcal{G}:=\{G_{1,i},\ldots,G_{2n,i}\}:=\{P_{1,i},\ldots,P_{n,i},Q_{1,i},\ldots,Q_{n,i}\}$
that satisfy 
\[
H(G_{j,i})\ll_{n} X_{i}, \qquad \vert G_{j,i}(\zeta)\vert \leq X_{i}^{-2n+1+\epsilon_{1}}
\qquad\qquad 1\leq j\leq 2n, \; i\geq i_{2}.
\]

As $\epsilon_{1}$ clearly can be chosen arbitrarily small,
for the proof of \eqref{eq:sfy} it remains to be checked that 
$\mathcal{G}$ is a linearly independent set. The linear independence of 
$\mathcal{P}:=\{P_{1,i},\ldots,P_{n,i}\}$ follows obviously from the linear 
independence of $\{R_{1,i},\ldots,R_{n,i}\}$.
The linear independence of $\mathcal{Q}:=\{Q_{1,i},\ldots,Q_{n,i}\}$ is obvious.
Any linear combination of the $P_{j,i}$ is of the form 
$P_{i}U$ for some polynomial $U\in\mathbb{Z}[T]$ and any linear combination of the $Q_{j,i}$ is of the form $Q_{i}V$ for 
some $V\in\mathbb{Z}[T]$ where $U,V$ have
degree at most $n-1$. Since on the other hand $P_{i},Q_{i}$ are 
linearly independent, irreducible and of degree $n$,
by the unique factorization in $\mathbb{Z}[T]$ we infer that the
equation $P_{i}U+Q_{i}V=0$ can only be satisfied if both $U$ and $V$ vanish identically.
By the linear independence of $\mathcal{P}$ and $\mathcal{Q}$
this results in the linear independence of $\mathcal{G}$. The proof is finished.
\end{proof}

Now we turn to the proof of the following next partial assertion of Theorem~\ref{neu}.
\begin{theorem} \label{rest}
Let $\zeta=\zeta_{\varphi}$ with corresponding $\sigma=\sigma_{\varphi}$ be as above. Then 
\[
w_{3}(\zeta)=1+\frac{2}{\sigma}, \qquad \lambda_{3}(\zeta)= \frac{1}{1+2\sigma}.
\]
\end{theorem}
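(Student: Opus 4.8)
The plan is to determine $w_3(\zeta_\varphi)$ and $\lambda_3(\zeta_\varphi)$ by combining the explicit approximation data in Lemma~\ref{laubug} (upper/lower bounds and the "best approximation" gaps recalled after it) with the parametric geometry of numbers from Section~\ref{para}. For the lower bound $w_3(\zeta)\geq 1+2/\sigma$ I would simply exhibit good polynomials: the quadratic $W_k$ from Lemma~\ref{laubug} already satisfies $|W_k(\zeta)|\asymp H(W_k)^{-1-2\eta_k}$, and along a subsequence $\eta_k\to 1/\sigma$, giving $w_2(\zeta)\geq 1+2/\sigma$, hence $w_3(\zeta)\geq w_2(\zeta)\geq 1+2/\sigma$. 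The corresponding lower bound $\lambda_3(\zeta)\geq 1/(1+2\sigma)$ then follows from Khintchine's transference \eqref{eq:khintchine} with $n=3$ (or, more cleanly, directly from $w_{3,4}(\zeta)=\widehat\lambda_3(\zeta)^{-1}=3$ and the duality \eqref{eq:debre}: $\lambda_3(\zeta)=1/\widehat w_{3,1}(\zeta)=1/\widehat w_3(\zeta)$ is the wrong one — rather $\lambda_{3,1}=1/\widehat w_{3,4}$, so I must be careful and instead use \eqref{eq:debre} in the form $\lambda_{n,j}=1/\widehat w_{n,n+2-j}$, which for $j=1$ ties $\lambda_3$ to $\widehat w_{3,4}$; one checks $\widehat w_{3,4}(\zeta)$ is governed by the four polynomials $\{R_1W_i,R_2W_i,W_{i+1},TW_{i+1}\}$ displayed at the end of Section~\ref{glmres}).

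The substantive half is the two upper bounds. For $w_3(\zeta)\leq 1+2/\sigma$ I would argue that any cubic or lower-degree integer polynomial $P$ with $|P(\zeta)|$ very small must, after factoring out its (at most quadratic) rational factor, have a genuinely cubic irreducible part to which Liouville's inequality applies — but by Theorem~\ref{sternchen} such irreducible cubics satisfy $|Q(\zeta)|\gg H(Q)^{-3-\epsilon}$, which is far weaker than $H^{-1-2/\sigma}$ once $\sigma<1$, so small values of $|P(\zeta)|$ can only come from the quadratic (or linear) factors, whose best approximations are exactly the $W_k$ by the Bugeaud--Laurent analysis recalled after Lemma~\ref{laubug}. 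Thus $w_3(\zeta)=w_2(\zeta)=1+2/\sigma$, and since the $W_k$ are real quadratic, the same chain gives $w_3^*(\zeta)=1+2/\sigma$ via the height-of-algebraic-number comparison. For $\lambda_3(\zeta)\leq 1/(1+2\sigma)$, I would work with the dual parametric functions $L_{3,j}^*(q)$: using \eqref{eq:jaja}, \eqref{eq:umrechnen}, \eqref{eq:umrechnen2} it suffices to pin down $\overline\psi_{3,4}^*$ (equivalently $\underline\psi_{3,1}$), and the combined-graph description — that near each parameter $q=\log H(W_{i+1})$ the four successive minima functions are carried by $\{R_1W_i,R_2W_i,W_{i+1},TW_{i+1}\}$ — forces the first minimum $L_{3,1}(q)$ to drop at a controlled rate determined by $\mu_i,\nu_i$ from \eqref{eq:obenauf}, giving precisely $\lambda_{3}(\zeta)=1/(1+2\sigma)$ after plugging $\eta_i\to 1/\sigma$.

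Concretely the order of steps I would follow is: (1) record the lower bounds via $W_k$ and Khintchine/transference; (2) invoke Theorem~\ref{sternchen} to rule out cubic-irreducible contributions to $w_3$, conclude $w_3(\zeta)=w_3^*(\zeta)=1+2/\sigma$; (3) set up the four spanning polynomials $\{R_1W_i,R_2W_i,W_{i+1},TW_{i+1}\}$ from Section~\ref{glmres} and, using Lemma~\ref{technisch} to control the linear best-approximation polynomial $R_1$ (recall $\zeta_\varphi$ is badly approximable, so $|R_1(\zeta)|\asymp H(R_1)^{-1}$ applies), describe the combined graph of $L_{3,1}^*,\dots,L_{3,4}^*$ on the interval between consecutive $\log H(W_i)$; (4) read off $\underline\psi_{3,1}$ (hence $\lambda_3$) from the slopes of $L_{3,1}$, matching the value $1/(1+2\sigma)$. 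The main obstacle is step (3)–(4): one must verify that no \emph{other} polynomials intervene to give a deeper or shallower first minimum between the scales $H(W_i)$ and $H(W_{i+1})$, i.e. that the local minima of $L_{3,1}^*$ really occur at the $W_i$ and the gap behaviour is exactly governed by \eqref{eq:obenauf} — this is where the "badly approximable" property of $\zeta_\varphi$ and the sharp two-sided estimates of Lemma~\ref{laubug} (together with Gelfond's lemma \eqref{eq:multipli} for the products) have to be used in full, essentially reproducing the parametric analysis of~\cite[Theorem~2.1]{ichlondon} but with the golden-ratio constants replaced by $\sigma$-dependent ones.
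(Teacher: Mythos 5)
Your overall strategy --- lower bounds from the quadratic polynomials $W_k$, and upper bounds from a description of the combined graph of $L_{3,1}^{\ast},\dots,L_{3,4}^{\ast}$ built on the spanning set $\{R_1W_k,R_2W_k,W_{k+1},TW_{k+1}\}$ and Lemma~\ref{technisch} --- is the same as the paper's, but three of your steps do not work as stated. First, Khintchine's transference \eqref{eq:khintchine} with $n=3$ and $w_3(\zeta)=1+2/\sigma$ only yields $\lambda_3(\zeta)\ge(\sigma+2)/(5\sigma+4)$, which is strictly smaller than $1/(1+2\sigma)$ for every $\sigma<1$ (and $\sigma\le\gamma^{-1}<1$ always); so your step (1) does not produce the claimed lower bound for $\lambda_3$. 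The correct mechanism is the one you mention as an alternative, $\lambda_{3}(\zeta)=1/\widehat w_{3,4}(\zeta)$ via \eqref{eq:debre}, but then the lower bound $\lambda_3(\zeta)\ge 1/(1+2\sigma)$ is equivalent to the upper bound $\widehat w_{3,4}(\zeta)\le 1+2\sigma$, i.e.\ to a \emph{non-existence} statement (at certain scales there are no four independent small polynomials), and that is precisely the hard interval-by-interval analysis of your steps (3)--(4), not a shortcut around it. Second, invoking Theorem~\ref{sternchen} to bound $w_3$ from above is circular: in the paper, \eqref{eq:trio} is deduced from $w_{3,3}(\zeta)=3$ together with the fact that the first two minima are always carried by multiples of the $W_k$, and both of these facts are outputs of the proof of the present theorem. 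You cannot assume them here.

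Third, and most importantly, the substance of the proof is exactly the part you defer to ``essentially reproducing'' the extremal-number analysis. What actually has to be established is: (i) at $q_k=3\log H(W_{k+1})$ all four $L_{3,j}^{\ast}$ are $o(q_k)$ (this does follow from Corollary~\ref{folgt}); (ii) on $I_k=[q_k,b_k)$, where $b_k=\tfrac34\bigl(\log H(W_{k+1})-\log|W_{k+1}(\zeta)|\bigr)$ is the local minimum of $L^{\ast}_{W_{k+1}}$, the first two minima decay with slope $-1/3+o(1)$, while the Minkowski bound \eqref{eq:lsumme} combined with the upper bound $1/3+o(1)$ on the growth of $L_{3,3}^{\ast},L_{3,4}^{\ast}$ furnished by the products $W_kE_t,W_kE_{t-1}$ and Lemma~\ref{technisch} forces the last two to rise with slope exactly $1/3+o(1)$; (iii) the symmetric behaviour on $J_k=[b_k,q_{k+1})$ together with the length relation $b_k=(q_k+q_{k+1})/2+o(q_k)$, which requires \eqref{eq:takeshi}. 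Only after these quantitative facts does one obtain $\underline\psi_{3,1}^{\ast}=\underline\psi_{3,2}^{\ast}=(\sigma-1)/(3(\sigma+1))$ and $\overline\psi_{3,3}^{\ast}=\overline\psi_{3,4}^{\ast}=-(\sigma-1)/(3(\sigma+1))$, and hence both claimed values via \eqref{eq:jaja}, \eqref{eq:umrechnen} and \eqref{eq:umrechnen2}. As written, your proposal names the right ingredients but leaves unproved the computation on which the upper bound for $w_3$ and both bounds for $\lambda_3$ rest.
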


We provide a brief outline and preliminaries
of the proof of Theorem~\ref{rest}.
We will establish a rather precise description of the functions
$L_{3,1}^{\ast}(q),\ldots,L_{3,4}^{\ast}(q)$ on $q\in(0,\infty)$ 
induced by $(\zeta_{\varphi},\zeta_{\varphi}^{2},\zeta_{\varphi}^{3})$.
Denote by $\vert I\vert$ the length of an interval $I$. We will show
that any $\zeta_{\varphi}$ induces a partition 
of the positive real numbers in half-open successive intervals $I_{1},J_{1},I_{2},J_{2},\ldots$ 
with the following properties.
  
\begin{itemize}
\item $\lim_{k\to\infty} \vert I_{k}\vert/\vert J_{k}\vert=1$.
\item At the left interval end of every $I_{k}$, all $L_{3,i}^{\ast}(q)$ are small (more precisely
$o(q)$ as $q\to\infty$) by absolute value. In $I_{k}$ the
functions $L_{3,1}^{\ast}(q), L_{3,2}^{\ast}(q)$ essentially decay with slope $-1/3$,
whereas $L_{3,3}^{\ast}(q),L_{3,4}^{\ast}(q)$ essentially rise with slope $1/3$.
\item At the right interval end of $I_{k}$, which equals the left interval end
of $J_{k}$, the opposite behavior appears. The functions $L_{3,1}^{\ast}(q), L_{3,2}^{\ast}(q)$ essentially
rise with slope $1/3$ in $J_{k}$,
whereas $L_{3,3}^{\ast}(q), L_{3,4}^{\ast}(q)$ essentially decay with slope $-1/3$. Ultimately
the functions $L_{3,1}^{\ast},\ldots,L_{3,4}^{\ast}$ asymptotically meet at the right end of $J_{k}$, 
which equals the left interval end of the successive $I_{k+1}$. 
\item The functions $\vert L_{3,1}^{\ast}(q)-L_{3,2}^{\ast}(q)\vert$, 
$\vert L_{3,3}^{\ast}(q)-L_{3,4}^{\ast}(q)\vert$ are of order $o(q)$
as $q\to\infty$
\end{itemize}

The word ''essentially'' in the above description means that
the stated behavior might be violated in short intervals only.
As for the special case of extremal numbers $\zeta_{\gamma-1}$ in~\cite{ichlondon}, 
the description applies to the simultaneous approximation functions $L_{3,j}(q)$ as well 
by \eqref{eq:jaja}, and as for the special case of extremal numbers 
in~\cite{ichlondon} we have
\begin{align}
w_{3,1}(\zeta)&=w_{3,2}(\zeta), \quad w_{3,3}(\zeta)= w_{3,4}(\zeta), \quad
\widehat{w}_{3,1}(\zeta)=\widehat{w}_{3,2}(\zeta), 
\quad \widehat{w}_{3,3}(\zeta)=\widehat{w}_{3,4}(\zeta),     \label{eq:rumpel} \\
\lambda_{3,1}(\zeta)&=\lambda_{3,2}(\zeta), \quad \lambda_{3,3}(\zeta)= \lambda_{3,4}(\zeta), \quad
\widehat{\lambda}_{3,1}(\zeta)=\widehat{\lambda}_{3,2}(\zeta), 
\quad \widehat{\lambda}_{3,3}(\zeta)=\widehat{\lambda}_{3,4}(\zeta), \label{eq:stielzchen}
\end{align}
which refines the claim of Theorem~\ref{neu}. Similarly as in the special case,
the decay phases of $L_{3,i}^{\ast}$ will turn out to be induced by the polynomials
$W_{k}$ from Lemma~\ref{laubug}. The rising phases are induced by products $W_{k}E_{l}$
for fixed $W_{k}$ and suitable successive best approximation polynomials $E_{l}$ 
in dimension $1$, defined by the property
\begin{equation} \label{eq:ellute}
E_{l}(\zeta_{\varphi})= \min \{ \vert Q(\zeta_{\varphi})\vert: Q\in\mathbb{Z}[T], \; 
\deg(Q)=1,\; 1\leq H(Q)\leq H(E_{l})\}.
\end{equation}
By Dirichlet's Theorem and since on the other hand 
any $\zeta_{\varphi}$ is badly approximable,
for any parameter $X>1$ there exist an index $l$ such that
$H(E_{l})\leq H(E_{l-1})\leq X$ and 
\begin{equation} \label{eq:luzia}
\vert E_{l}(\zeta_{\varphi})\vert \asymp_{\zeta_{\varphi}} 
\vert E_{l-1}(\zeta_{\varphi})\vert  \asymp_{\zeta_{\varphi}} 
H(E_{l})^{-1} \asymp_{\zeta_{\varphi}} 
H(E_{l-1})^{-1}
\asymp_{\zeta_{\varphi}} X^{-1}.
\end{equation}
In fact one might take arbitrary many successive $l,l+1,\ldots,l+m$
with this property, but for us considering two
suffices.
In view of Lemma~\ref{technisch}, the relation
\eqref{eq:luzia} will lead to an
asymptotic increase by $1/3$ for graphs of $L_{3,j}^{\ast}$
induced by the succession of certain products 
$W_{k}E_{l}, W_{k}E_{l+1},\ldots,W_{k}E_{l+h}$.
In contrast to the special case of extremal 
numbers $\zeta_{\gamma-1}$, 
where the quotients of $\vert I_{k+1}\vert/\vert I_{k}\vert$
and $\vert J_{k+1}\vert/\vert J_{k}\vert$ tend to the 
golden ratio $\gamma$, in general they
depend on the sequence $(s_{k})_{k\geq 1}$ connected 
to $\zeta_{\varphi}$.
Our proof of Theorem~\ref{rest} below essentially employs the same method as for the determination 
of $\lambda_{3}(\zeta), w_{3}(\zeta)$ for extremal numbers $\zeta$ in the proof of~\cite[(11) in Theorem~2.1]{ichlondon}. However,
we attempt to provide clearer and better readable arguments
for some rather sketchy parts of the proof 
of~\cite[(11), Theorem~2.1]{ichlondon} below.
Recall the definitions of $\eta_{k}$ and $W_{k}$ from Section~\ref{uni} associated to $\zeta_{\varphi}$, 
the latter essentially replace the polynomials $P_{k}$ 
from~\cite{ichlondon}.
We will implicitly use the consequence of \eqref{eq:multipli}
that if a polynomial $Q$ of degree not exceeding $n$ factors 
as $Q=Q_{1}Q_{2}$, 
then $\vert Q(\zeta)\vert \leq H(Q)^{-w}$ 
implies that either $\vert Q_{1}(\zeta)\vert \ll_{n} H(Q_{1})^{-w}$
or $\vert Q_{2}(\zeta)\vert \ll_{n} H(Q_{2})^{-w}$. This type of argument
was already used by Wirsing~\cite{wirsing}. Since we deal with polynomials
of degree at most three, the implied constant is in fact absolute.

\begin{proof}[Proof of Theorem~\ref{rest}]
First we recall and justify the explanations at the end of
Section~\ref{glmres}. Here we denote the index by $k$
instead of $i$. In the proof of Theorem~\ref{vorb} 
with the auxiliary Proposition~\ref{leicht}, we saw
that for $P_{k},Q_{k}$ as given there, for the parameter
$X=H(Q_{k})$ we have four polynomials $G_{1,k},\ldots,G_{4,k}$
that satisfy $H(G_{i,k})\ll X, \vert G_{i,k}(\zeta)\vert\leq X^{-3+\epsilon}$. Furthermore the proof shows 
the $G_{i,k}$ can be chosen of the form 
\[
\{ G_{1,k},\ldots,G_{4,k}\}=\{ R_{1,k}\cdot P_{k},\; R_{2,k}\cdot P_{k},\; Q_{k},\; T\cdot Q_{k}\}
\]
for linear integer polynomials $R_{1,k}(T), R_{2,k}(T)$. 
In the present situation
$P_{k}$ corresponds to $W_{k}$ and $Q_{k}$ to $W_{k+1}$, as carried
out in Section~\ref{glmres}. Hence, for any large $k$, 
with $X_{k}=H(W_{k+1})$ we have 
four linearly independent polynomials 
\begin{equation} \label{eq:infact}
\mathscr{G}_{k}=\{ G_{1,k},\ldots,G_{4,k}\}=
\{ W_{k}\cdot R_{1,k}, W_{k}\cdot R_{2,k},W_{k+1},T\cdot W_{k+1}\},
\end{equation}
with $R_{1,k}(T),R_{2,k}(T)$ linear integer polynomials,
so that
\[
\max_{1\leq i\leq 4} H(G_{i,k})\ll X_{k}, 
\qquad  \max_{1\leq i\leq 4} \vert G_{i,k}(\zeta)\vert 
\leq X_{k}^{-3+\epsilon}.
\]
More precisely,
essentially the same argument as in the proof of 
Proposition~\ref{leicht} shows the following
parametric claim. For arbitrarily 
small $\varepsilon>0$ and sufficiently
large $k$ if we take
$q=q_{k}$ as the solution to
\[
L_{W_{k+1}}^{\ast}(q)= \log H(W_{k+1})-\frac{q}{3}=0,
\]
which leads to
\begin{equation}  \label{eq:qk}
q_{k}= 3 \log H(W_{k+1}),
\end{equation}
then 
\begin{equation} \label{eq:virtus}
\vert L_{3,i}^{\ast}(q_{k})\vert\leq \varepsilon q_{k}, \qquad 1\leq i\leq 4, 
\end{equation}
will hold for $\varepsilon$ a suitable
variation of $\epsilon$.
Indeed since $W_{k+1}$ is one of the involved polynomials
whose related function $L_{W_{k}}^{\ast}(q)$ decays at $q=q_{k}$,
in view of \eqref{eq:incide} the claim can be readily verified.

By Lemma~\ref{laubug}, with the constant
$\eta_{k}$ defined there, any polynomial $W_{k}$ satisfies
\begin{equation} \label{eq:brauchdoch}
-\frac{\log \vert W_{k}(\zeta)\vert}{\log H(W_{k})}= 1+2\eta_{k}+o(1)> 3,\qquad k\to\infty.
\end{equation}
Clearly the same argument applies to $TW_{k}$ as 
$H(TW_{k})=H(W_{k})$ and $\vert \zeta W_{k}(\zeta)\vert$
differs from $\vert W_{k}(\zeta)\vert$ only by the constant
factor $\zeta\neq 0$. 
Hence
\begin{equation} \label{eq:brauchtum}
-\frac{\log \vert G_{i,k}(\zeta)\vert}{\log H(G_{i,k})}
=1+2\eta_{k+1}+o(1)>3,\qquad 
i\in\{3,4\}, \; k\to\infty.
\end{equation}
Let $b_{k}$ and $c_{k}$ respectively
be the local minima of the functions $L_{G_{3,k}}^{\ast}(q),
L_{G_{4,k}}^{\ast}(q)$. In view of \eqref{eq:incide} 
and since $H(G_{3,k})=H(TG_{3,k})=H(G_{4,k})$ these
are given as the solutions to 
\[
\log H(W_{k+1})-\frac{b_{k}}{3}= \log \vert W_{k+1}(\zeta)\vert+b_{k},
\quad
\log H(W_{k+1})-\frac{c_{k}}{3}= \log \vert \zeta W_{k+1}(\zeta)\vert+c_{k},
\]
which yields
\begin{equation} \label{eq:vorhang}
b_{k}=\frac{3}{4}( \log H(W_{k+1})-\log\vert W_{k+1}(\zeta)\vert),\quad
c_{k}=\frac{3}{4}( \log H(W_{k+1})-\log\vert \zeta W_{k+1}(\zeta)\vert).
\end{equation}
With \eqref{eq:brauchtum} it is not hard to check that
\begin{equation} \label{eq:beschr}
q_{k}<\min\{ b_{k},c_{k}\}, \qquad \vert b_{k}-c_{k}\vert= O(1), \qquad 
\vert L_{G_{3,k}}^{\ast}(b_{k})-L_{G_{4,k}}^{\ast}(b_{k})\vert=O(1).
\end{equation}
Combining \eqref{eq:brauchtum}, \eqref{eq:vorhang} and
\eqref{eq:beschr},
a short calculation shows
\[
L_{G_{i,k}}^{\ast}(b_{k})= \log H(W_{k+1})-\frac{b_{k}}{3}+O(1)=
\left(\frac{1-\eta_{k+1}}{3(1+\eta_{k+1})}+o(1)\right)b_{k},\qquad 
i\in\{3,4\}, \; k\to\infty.
\]
Since
$G_{3,k}^{\ast}$ and $G_{4,k}^{\ast}$ are
linearly independent, 
in particular
\[
L_{3,1}^{\ast}(b_{k})\leq 
L_{3,2}^{\ast}(b_{k})\leq \left(\frac{1-\eta_{k+1}}{3(1+\eta_{k})}+o(1)\right)b_{k}.
\]
On the other hand, 
by \eqref{eq:virtus} we have 
$L_{3,2}^{\ast}(q_{k})\geq L_{3,1}^{\ast}(q_{k})\geq -\varepsilon q_{k}$.
Since we noticed $q_{k}<b_{k}$ and all functions
$L_{3,j}^{\ast}, 1\leq j\leq 4$, have slope at least $-1/3$, we have
$L_{3,1}^{\ast}(b_{k})\geq 
-\varepsilon q_{k}-\frac{1}{3}(b_{k}-q_{k})$. After a short calculation 
using \eqref{eq:qk}, \eqref{eq:brauchtum}, \eqref{eq:vorhang} and
\eqref{eq:beschr} again, we end up with the reverse 
asymptotic inequality
\[
L_{3,1}^{\ast}(b_{k})\geq \left(\frac{1-\eta_{k+1}}{3(1+\eta_{k+1})}
-\varepsilon\right)b_{k}.
\]
Thus, as we may let $\varepsilon\to 0$, we may write
\[
L_{3,1}^{\ast}(b_{k})=L_{3,2}^{\ast}(b_{k})
=\left(\frac{1-\eta_{k+1}}{3(1+\eta_{k+1})}+o(1)\right)b_{k},
\qquad\qquad k\to\infty.
\]
Equivalently, in terms of the functions $\psi^{\ast}_{3,.}$, 
upon defining
\[
B_{k}:=e^{b_{k}},  \qquad k\geq 1,
\]
we have the asymptotic formula
\begin{equation} \label{eq:brauche}
\psi_{3,j}^{\ast}(B_{k})
=\frac{1-\eta_{k+1}}{3(1+\eta_{k+1})}+o(1), \qquad j\in\{1,2\}, \; k\to\infty.  
\end{equation}
The above argument in fact shows that $L_{3,1}^{\ast}(q)$ and 
$L_{3,2}^{\ast}(q)$ both
decay with asymptotic slope $-1/3$ in the interval 
$I_{k}:=[q_{k},b_{k})$, that is for any $q_{k}\leq a<b\leq b_{k}$ 
we have
\begin{equation} \label{eq:jori}
L_{3,j}^{\ast}(b)-L_{3,j}^{\ast}(a)=(b-a)
\left(-\frac{1}{3}+o(1)\right)+O(1),\qquad
j\in\{1,2\}, \quad k\to\infty.
\end{equation}
Here and in similar succeeding estimates,
the additional $O(1)$ term is only needed
when $a$ an $b$ are roughly of the same size.
It follows from \eqref{eq:jori} and \eqref{eq:lsumme} that 
the sum $L_{3,3}^{\ast}+L_{3,4}^{\ast}$ asymptotically increases with constant
slope $2/3$ in $I_{k}$, that is for any $q_{k}\leq a<b\leq b_{k}$ we have
\begin{equation} \label{eq:slope}
L_{3,3}^{\ast}(b)+L_{3,4}^{\ast}(b)-
L_{3,3}^{\ast}(a)-L_{3,4}^{\ast}(a)= 
(b-a)\left(\frac{2}{3}+o(1)\right)+O(1), \qquad k\to\infty. 
\end{equation}
Assume we have already shown that both $L_{3,3}^{\ast}$ 
and $L_{3,4}^{\ast}$
increase at most by $1/3$ in the interval 
$(q_{k},\infty)$, that is for $q_{k}\leq b$ we have
\begin{equation} \label{eq:exakter}
L_{3,3}^{\ast}(b)-L_{3,3}^{\ast}(q_{k})\leq 
(b-q_{k})\left(\frac{1}{3}+\varepsilon\right), \quad 
L_{3,4}^{\ast}(b)-L_{3,4}^{\ast}(q_{k})\leq 
(b-q_{k})\left(\frac{1}{3}+\varepsilon\right).
\end{equation}
Then by \eqref{eq:slope} both must have asymptotically
slope $1/3+o(1)$ as $k\to\infty$ in the entire interval $I_{k}$, 
that is for $q_{k}\leq a<b\leq b_{k}$ we have
\begin{equation} \label{eq:balzzeit}
L_{3,j}^{\ast}(b)-L_{3,j}^{\ast}(a)=
(b-a)\left(\frac{1}{3}+o(1)\right)+O(1),\qquad
j\in\{3,4\}, \quad k\to\infty.
\end{equation}
To show \eqref{eq:exakter}, first
note that by \eqref{eq:virtus} and 
since $L_{3,4}^{\ast}\geq L_{3,3}^{\ast}$
it suffices to show
\begin{equation} \label{eq:showmal}
L_{3,4}^{\ast}(b)\leq 
(b-q_{k})\left(\frac{1}{3}+o(1)\right)+O(1),\qquad
\quad k\to\infty.
\end{equation}
For any parameter $Y_{k}>X_{k}=H(W_{k+1})$,
consider the two linear
polynomials $E_{t}, E_{t-1}$ defined by \eqref{eq:ellute},
with $t=t(Y_{k})$ chosen largest possible such that 
\[
\max \{ H(W_{k}E_{t}),H(W_{k}E_{t-1})\}\leq Y_{k}.
\]
By Gelfond's Lemma \eqref{eq:multipli} we 
have $H(E_{t})\asymp H(E_{t-1})\asymp Y_{k}/X_{k-1}$.
Thus and since $\zeta$ is badly approximable with respect to
one-dimensional approximation, as previously noticed in \eqref{eq:luzia}, we have
\begin{equation} \label{eq:et}
\vert E_{t}(\zeta)\vert \asymp_{\zeta} 
\vert E_{t-1}(\zeta)\vert \asymp_{\zeta} 
H(E_{t})^{-1} \asymp_{\zeta} 
H(E_{t-1})^{-1} \asymp_{\zeta}  
\left(\frac{Y_{k}}{X_{k-1}}\right)^{-1}=\frac{X_{k-1}}{Y_{k}}.
\end{equation}
Define $U_{k,1}:=W_{k}E_{t}$ and $U_{k,2}:=W_{k}E_{t-1}$ 
for $t=t(Y_{k})$,
depending on $Y_{k}$ which is treated as a variable.
As $Y_{k}$ increases in $(X_{k},\infty)$,
define the two functions $L_{f_{k,i}}^{\ast}$ for $i=1,2$
as the succession of the functions $L_{U_{k,i}}^{\ast}$ 
in $(q_{k},\infty)$ with the
respective polynomials $U_{k,i}=U_{k,i}(t)=U_{k,i}(Y_{k})$.
That is by \eqref{eq:incide} formally
\begin{equation}
L_{f_{k,1}}^{\ast}(q)= 
\min_{t\geq 1}\max\{ -\frac{q}{3}+\log H(W_{k}E_{t}),
q+\log \vert W_{k}(\zeta)E_{t}(\zeta)\vert\}  \label{eq:k+1}
\end{equation}
and if $t_{0}=t_{0}(q)$ denotes the minimum index
in \eqref{eq:k+1}, then
\begin{equation}
L_{f_{k,2}}^{\ast}(q)= \min_{ t\geq 1, t\neq t_{0}} 
\max\{ -\frac{q}{3}+\log H(W_{k}E_{t}),
q+\log \vert W_{k}(\zeta)E_{t}(\zeta)\vert\}.  \label{eq:k+2}
\end{equation}
In fact the latter minimum index $t_{1}$ is just $t_{1}=t_{0}-1$.
Since $H(E_{t})\ll_{\zeta} H(E_{t-1})$ by \eqref{eq:et},
application of Lemma~\ref{technisch} with 
the polynomials $P(T)=W_{k}(T)$, and $Q(T)=E_{t}(T)$
and $Q(T)=E_{t-1}(T)$ respectively, yields that
both induced
functions $L_{f_{k,1}}^{\ast},L_{f_{k,2}}^{\ast}$ rise 
with slope $1/3+o(1)$ 
in $(q_{k},\infty)$, as $H(E_{t})\to\infty$. Since
$H(E_{t})\asymp Y_{k}/X_{k-1}>X_{k}/X_{k-1}\to\infty$
as $k\to\infty$, it suffices to assume $k\to\infty$.
Hence for any $q_{k}\leq a<b$ the estimate
\begin{equation} \label{eq:jip}
L_{f_{k,i}}^{\ast}(b)-L_{f_{k,i}}^{\ast}(a)=
(b-a)\left(\frac{1}{3}+o(1)\right)+O(1), 
\qquad i\in\{1,2\}, \; k\to\infty,
\end{equation}
holds.
On the other hand, the set 
$\{ W_{k}E_{t},W_{k}E_{t-1},W_{k+1},TW_{k+1}\}$ is
linearly independent for any large $k$,
as they span the same space as $\mathscr{G}_{k}$.
It is not hard to verify that
\[
\max \{ \vert W_{k+1}(\zeta)\vert, \vert \zeta W_{k+1}(\zeta)\vert\}
< \min_{i=1,2} \vert U_{k,i}(\zeta)\vert,
\]
as soon as $H(U_{k,i})<H(W_{k+2})$, thus we have 
\begin{equation} \label{eq:wies}
L_{3,4}^{\ast}(b)\leq \max_{i=1,2} L_{f_{k,i}}^{\ast}(b),
\qquad q_{k}\leq b\leq q_{k+1}.
\end{equation}
Consider the polynomials
$W_{k}R_{1,k}$ and $W_{k}R_{2,k}$ with $R_{i,k}$
from \eqref{eq:infact}. Form the proof of Theorem~\ref{vorb},
$R_{1,k}, R_{2,k}$ can be chosen $E_{t}, E_{t-1}$.
Therefore,
by \eqref{eq:virtus}, for the parameter $q=q_{k}$ we conclude
\begin{equation} \label{eq:ferzig}
\max_{i=1,2} L_{f_{k,i}}^{\ast}(q_{k}) \leq  \varepsilon q_{k}.
\end{equation}
Finally
\eqref{eq:showmal} follows from \eqref{eq:jip} with $a=q_{k}$,
\eqref{eq:wies} and \eqref{eq:ferzig}. The relation \eqref{eq:balzzeit} is implied.

We have just shown in \eqref{eq:jori} and \eqref{eq:balzzeit}
that in the interval $I_{k}$, the
first two successive minima asymptotically decay with slope $-1/3$
and the third and fourth asymptotically increase with slope
$1/3$. Since at $q=q_{k}$ all $\vert L_{3,j}^{\ast}(q)\vert$
are of order $o(q)$ by \eqref{eq:virtus}, we have
$L_{3,i}^{\ast}(b_{k})= -L_{3,j}^{\ast}(b_{k})+o(b_{k})$ and thus
$\psi_{3,i}^{\ast}(B_{k})= -\psi_{3,j}^{\ast}(B_{k})+o(1)$ 
for $i\in\{1,2\}$
and $j\in\{3,4\}$, as $k\to\infty$. We conclude that
relation \eqref{eq:brauche} can be complemented to
\begin{align}
\psi_{3,j}^{\ast}(B_{k})&=\frac{1-\eta_{k}}{3(1+\eta_{k})}+o(1), \qquad\quad j\in\{1,2\}, \; k\to\infty,  \label{eq:ha}  \\
\psi_{3,j}^{\ast}(B_{k})&=-\frac{1-\eta_{k}}{3(1+\eta_{k})}+o(1), \qquad j\in\{3,4\}, \; k\to\infty.  \label{eq:hnb}
\end{align}
Let $J_{k}:=[b_{k},q_{k+1})$. 
We first show that $I_{k}$ and $J_{k}$ have roughly the same length,
that is $b_{k}$ should lie roughly 
in the middle between $q_{k}$ and $q_{k+1}$ and
$\lim_{k\to\infty} \vert I_{k}\vert/\vert J_{k}\vert=1$.
More precisely we establish $q_{k+1}-b_{k}=b_{k}-q_{k}+o(q_{k})$,
or equivalently
\begin{equation} \label{eq:weneed}
b_{k}=\frac{q_{k}+q_{k+1}}{2}+o(q_{k}).
\end{equation}
Recall combination of \eqref{eq:jogenau} and \eqref{eq:sieg} yields
\begin{equation} \label{eq:takeshi}
\frac{\log H(W_{k+1})}{\log H(W_{k})}=\eta_{k+1}+o(1), \qquad k\to\infty.
\end{equation}
Combination of \eqref{eq:brauchtum}, \eqref{eq:vorhang}
and \eqref{eq:takeshi} yields 
\begin{align*}
b_{k}&=\frac{3}{4}( \log H(W_{k+1})-\log\vert W_{k+1}(\zeta)\vert)=\\
&=\frac{3}{4}( \log H(W_{k+1})+(1+2\eta_{k+1}+o(1))\log H(W_{k+1}))=\\
&=\frac{3}{2}(1+\eta_{k+1}+o(1))\log H(W_{k+1}).
\end{align*}
This is indeed as the same value we obtain from 
combining \eqref{eq:qk}, \eqref{eq:takeshi} and 
the hypothesis \eqref{eq:weneed}, as the calculation
\[
\frac{q_{k}+q_{k+1}}{2}=\frac{3\log H(W_{k+1})+3\log H(W_{k+2})}{2}=\frac{3}{2}(1+\eta_{k+1})\log H(W_{k+1}),
\]
shows.
Thus indeed \eqref{eq:weneed} must hold 
and $\vert I_{k}\vert/\vert J_{k}\vert=1+o(1)$ as $k\to\infty$.

We now conclude that in the interval $J_{k}$, the functions 
$L_{3,1}^{\ast}, L_{3,2}^{\ast}$
have asymptotic slope $1/3$, whereas the functions 
$L_{3,3}^{\ast}, L_{3,4}^{\ast}$ have asymptotic slope $-1/3$,
until they all meet (asymptotically) at $q_{k+1}$. 
%
%
Recall we have established that $L_{3,3}^{\ast}$ and 
$L_{3,4}^{\ast}$ both rise in $I_{k}$ with asymptotic slope
$1/3$. If we let $b=b_{k}$ and $a=q_{k}$ in \eqref{eq:balzzeit}, 
since $\vert L_{3,j}^{\ast}(q)\vert$ are all small at $q=q_{k}$ 
in view of \eqref{eq:virtus} and $q_{k}\asymp b_{k}$, this means
\[
\frac{L_{3,j}^{\ast}(b_{k})-L_{3,j}^{\ast}(q_{k})}{b_{k}-q_{k}}=
\frac{L_{3,j}^{\ast}(b_{k})}{b_{k}-q_{k}}+o(1)= \frac{1}{3}+o(1),\qquad j\in\{3,4\},\; k\to\infty.
\]
In combination with the fact that $q_{k+1}-b_{k}$ and $b_{k}-q_{k}$
are roughly equal by \eqref{eq:weneed} and since
$\vert L_{3,j}^{\ast}(q)\vert$ are all small at $q=q_{k+1}$ again
by \eqref{eq:virtus} with index shift $k\to k+1$ and
$q_{k+1}\asymp b_{k}$, we infer
\begin{equation} \label{eq:dadada}
\frac{L_{3,j}^{\ast}(q_{k+1})-L_{3,j}^{\ast}(b_{k})}{q_{k+1}-b_{k}}=
-\frac{L_{3,j}^{\ast}(b_{k})}{q_{k+1}-b_{k}}+o(1)= -\frac{1}{3}+o(1),\qquad j\in\{3,4\},\; k\to\infty.
\end{equation}
On the other hand the functions $L_{3,j}^{\ast}$ have slope at least $-1/3$, so $L_{3,3}^{\ast}, L_{3,4}^{\ast}$ must each decay asymptotically with slope $-1/3+o(1)$ in $J_{k}$, that is 
for $b_{k}\leq a<b\leq q_{k+1}$ we must have
\begin{equation} \label{eq:hnu}
L_{3,j}^{\ast}(b)-L_{3,j}^{\ast}(a)= 
\left(-\frac{1}{3}+o(1)\right)(b-a)+O(1), \qquad j\in\{3,4\},\; k\to\infty.
\end{equation}
From \eqref{eq:lsumme} we further deduce that the sum 
$L_{3,1}^{\ast}+L_{3,2}^{\ast}$
must asymptotically increase with slope $2/3$ in $J_{k}$. 
By this again we mean that for $b_{k}\leq a<b\leq q_{k+1}$ we have
\begin{equation} \label{eq:a1a}
L_{3,1}^{\ast}(b)+L_{3,2}^{\ast}(b)-L_{3,1}^{\ast}(a)-L_{3,2}^{\ast}(a)= \left(\frac{2}{3}+o(1)\right)\cdot (b-a)+O(1), \qquad k\to\infty.
\end{equation}
Now from a very similar argument as for \eqref{eq:showmal}
we derive that both $L_{3,1}^{\ast}$ and $L_{3,2}^{\ast}$ 
increase with slope $1/3+o(1)$ as $k\to\infty$ in 
(any not too small subinterval of) $J_{k}$. 
On the one hand, since $L_{3,i}(b_{k})=-L_{3,j}(b_{k})$
for $i\in\{1,2\}, j\in\{3,4\}$ by \eqref{eq:ha}, \eqref{eq:hnb}, 
similarly to \eqref{eq:dadada} we calculate
\begin{equation} \label{eq:notice}
\frac{L_{3,j}^{\ast}(q_{k+1})-L_{3,j}^{\ast}(b_{k})}{q_{k+1}-b_{k}}=-\frac{L_{3,j}^{\ast}(b_{k})}{q_{k+1}-b_{k}}+o(1)= \frac{1}{3}+o(1),\qquad j\in\{1,2\},\; k\to\infty.
\end{equation}
This shows that on average $L_{3,1}^{\ast}$ and $L_{3,2}^{\ast}$
increase with slope $1/3$ in $J_{k}$. For the refined
local version (in any subinterval of $J_{k}$), notice that
\eqref{eq:notice}
implies that both $L_{3,1}^{\ast}, L_{3,2}^{\ast}$
increase on average by $1/3$ in $J_{k}$. However, by essentially
the same argument as for \eqref{eq:exakter},
\eqref{eq:showmal}, each of $L_{3,1}^{\ast}, L_{3,2}^{\ast}$ can increase with slope
at most $1/3$ in any not too small subinterval of $J_{k}$. 
Indeed, considering the
polynomials $W_{k+1}E_{t}, W_{k+1}E_{t-1}$ and 
the derived functions $f_{k+1,i}^{\ast}$ 
from \eqref{eq:k+1}
and \eqref{eq:k+2}, essentially Lemma~\ref{technisch} yields
the estimate
\begin{equation} \label{eq:1a1}
L_{3,1}^{\ast}(b)\leq L_{3,2}^{\ast}(b)\leq 
(b-b_{k})\left(\frac{1}{3}+o(1)\right)+O(1),\qquad
b>b_{k},\quad k\to\infty,
\end{equation}
similarly to \eqref{eq:showmal}.
By combination of \eqref{eq:a1a} and \eqref{eq:1a1}
for $b_{k}\leq a<b\leq q_{k+1}$ we indeed derive
\begin{equation} \label{eq:nhu}
L_{3,j}^{\ast}(b)-L_{3,j}^{\ast}(a)= 
\left(\frac{1}{3}+o(1)\right)(b-a)+O(1), 
\qquad j\in\{1,2\},\; k\to\infty,
\end{equation}
as claimed.

The claims \eqref{eq:hnu} with \eqref{eq:nhu} 
for $b=q_{k+1}, a=b_{k}$, can in view of \eqref{eq:virtus}
be stated as
\[
\lim_{k\to\infty} -\frac{L_{3,1}^{\ast}(b_{k})}{q_{k+1}-b_{k}}=
\lim_{k\to\infty} -\frac{L_{3,2}^{\ast}(b_{k})}{q_{k+1}-b_{k}}= 
\lim_{k\to\infty} \frac{L_{3,3}^{\ast}(b_{k})}{q_{k+1}-b_{k}}= 
\lim_{k\to\infty} \frac{L_{3,4}^{\ast}(b_{k})}{q_{k+1}-b_{k}}= 
 \frac{1}{3}.
\]

We eventually put the above findings together 
to determine the exponents.
Fix any interval $[q_{k},q_{k+1})=I_{k}\cup J_{k}$,
and let $q\in [q_{k},q_{k+1})$. 
From \eqref{eq:jori}, \eqref{eq:balzzeit}, \eqref{eq:hnu}, and
\eqref{eq:1a1}, no matter if  we have $q\in [q_{k},b_{k})$ 
or $q\in [b_{k},q_{k+1})$, as $k\to\infty$ we derive
\begin{align*}
L_{3,j}^{\ast}(q) &\geq 
L_{3,j}^{\ast}(b_{k})+
\left(\frac{1}{3}+o(1)\right)\vert q-b_{k}\vert+O(1)
\geq L_{3,j}^{\ast}(b_{k})+o(q), \qquad j\in\{1,2\},  \\
L_{3,j}^{\ast}(q)&\leq 
L_{3,j}^{\ast}(b_{k})-\left(\frac{1}{3}+o(1)\right)\vert q-b_{k}\vert+O(1)
\leq L_{3,j}^{\ast}(b_{k})+o(q), \qquad j\in\{3,4\}.
\end{align*}
It is easy to check with \eqref{eq:ha} and \eqref{eq:hnb} that
for $Q_{k}:= e^{q_{k}}$ and $M_{k}=[Q_{k},Q_{k+1})$, these 
results imply
\begin{align}
\psi_{3,j}^{\ast}(Q)&\geq \psi_{3,j}^{\ast}(B_{k})+o(1)
= \frac{1-\eta_{k}}{3(1+\eta_{k})}+o(1), 
\qquad \qquad\quad Q\in M_{k},\; j\in\{1,2\}, \label{eq:openup} \\
\psi_{3,j}^{\ast}(Q)&\leq \psi_{3,j}^{\ast}(B_{k})+o(1)
=-\frac{1-\eta_{k}}{3(1+\eta_{k})}+o(1), 
\qquad\qquad Q\in M_{k},\;j\in\{3,4\}, \label{eq:downup}
\end{align}
as $k\to\infty$. Observe that the right interval end of $J_{k}$ and
the left interval end of the successive $I_{k+1}$
both equal $q_{k+1}$, and 
hence $I_{1},J_{1},I_{2},J_{2},\ldots$ forms
a partition of $[q_{2},\infty)$.
Hence $\cup_{k\geq 1} M_{k}$ form a partition of $[Q_{2},\infty)$, 
and \eqref{eq:openup}, \eqref{eq:downup} yield for $j\in\{1,2\}$
\begin{equation*}
\underline{\psi}_{3,j}^{\ast}= \liminf_{Q\to\infty} 
\psi_{3,j}^{\ast}(Q)= \liminf_{k\to\infty} \inf_{Q\in M_{k}} 
\psi_{3,j}^{\ast}(Q)= \liminf_{k\to\infty} 
\psi_{3,j}^{\ast}(B_{k})= \liminf_{k\to\infty} \frac{1-\eta_{k}}{3(1+\eta_{k})},
\end{equation*}
and for $j\in\{3,4\}$ the estimates
\begin{equation*}
\overline{\psi}_{3,j}^{\ast}= \limsup_{Q\to\infty} 
\psi_{3,j}^{\ast}(Q)= \limsup_{k\to\infty} \sup_{Q\in M_{k}} 
\psi_{3,j}^{\ast}(Q)=
\limsup_{k\to\infty} 
\psi_{3,j}^{\ast}(B_{k})= \limsup_{k\to\infty}-\frac{1-\eta_{k}}{3(1+\eta_{k})}.
\end{equation*}
Thus since $\limsup_{k\to\infty} \eta_{k}=\sigma^{-1}$ by \eqref{eq:sigmarder}, we infer
\[
\underline{\psi}_{3,1}^{\ast}= \underline{\psi}_{3,2}^{\ast}=\frac{\sigma-1}{3(\sigma+1)}, \qquad
\overline{\psi}_{3,3}^{\ast}= \overline{\psi}_{3,4}^{\ast}=-\frac{\sigma-1}{3(\sigma+1)}.
\]
Application of \eqref{eq:jaja}, \eqref{eq:umrechnen} and \eqref{eq:umrechnen2} yields
\begin{equation} \label{eq:dazu}
w_{3}(\zeta)=w_{3,2}(\zeta)=1+\frac{2}{\sigma}, \qquad \lambda_{3}(\zeta)=\lambda_{3,2}(\zeta)=\frac{1}{1+2\sigma},
\end{equation}
containing the claims of the theorem. Furthermore, we point out the method shows that both functions
$\vert L_{3,1}^{\ast}(q)-L_{3,2}^{\ast}(q)\vert$ and 
$\vert L_{3,3}^{\ast}(q)-L_{3,4}^{\ast}(q)\vert$ 
differ at most by $o(q)$ as $q\to\infty$, and that
very similarly as for extremal numbers~\cite[Remark~4.2 and (50),(51)]{ichlondon}, we obtain 
\begin{align}
w_{3,3}(\zeta)=w_{3,4}(\zeta)=\widehat{w}_{3}(\zeta)=\widehat{w}_{3,2}(\zeta)=3,   \label{eq:cut} \\
\lambda_{3,3}(\zeta)=\lambda_{3,4}(\zeta)=\widehat{\lambda}_{3}(\zeta)
=\widehat{\lambda}_{3,2}(\zeta)=\frac{1}{3}, \nonumber
\end{align}
and
\begin{equation} \label{eq:whutli}
\widehat{w}_{3,3}(\zeta)=\widehat{w}_{3,4}(\zeta)=
1+2\sigma, \qquad \widehat{\lambda}_{3,3}(\zeta)
=\widehat{\lambda}_{3,4}(\zeta)=\frac{\sigma}{\sigma+2}.
\end{equation}
\end{proof}

We infer $w_{3}^{\ast}(\zeta)=1+2/\sigma$ for $\zeta=\zeta_{\varphi}$ from the monotonicity
of the sequence $(w_{n}^{\ast}(\zeta))_{n\geq 1}$, inequality \eqref{eq:sternug},
Theorem~\ref{buglauthm} and Theorem~\ref{rest} via
\[
1+\frac{2}{\sigma}=w_{2}^{\ast}(\zeta)\leq w_{3}^{\ast}(\zeta)\leq w_{3}(\zeta)=1+\frac{2}{\sigma}.
\]
Before we establish the last claim \eqref{eq:wstern} of Theorem~\ref{neu}, we turn towards Theorem~\ref{sternchen}. 
As in the case of extremal numbers, the description of the combined graph of the functions $L_{3,j}^{\ast}(q)$ 
from Theorem~\ref{neu} allows for deducing Theorem~\ref{sternchen}. The
proof is again similar to the one of~\cite[Theorem~2.2]{ichlondon}. However, application of results of Davenport and Schmidt~\cite{davsh}
and Bugeaud~\cite{bugbuch} lead almost directly to 
a proof
of \eqref{eq:otherhand}, compared to the rather tehcnical
and lengthy proof of the corresponding claim 
in~\cite[Theorem~2.2]{ichlondon}.
These results enable us
to establish the new claim \eqref{eq:pere} as well.


\begin{proof}[Proof of Theorem~\ref{sternchen}]
It follows from the proof of Theorem~\ref{rest} that for any 
large $q$,
the first two successive minima functions
of the linear form problem related 
to $\psi_{3,1}^{\ast}(q), \psi_{3,2}^{\ast}(q)$ are induced by
polynomial multiples $W_{k}E_{t}, W_{k}E_{t-1}$ (or 
$W_{k+1}, TW_{k+1}$, the argument below remains 
essentially the same)
of some polynomial $W_{k}$.
For each $k\geq 1$ they obviously
span the same space as $\{ W_{k},TW_{k}\}$.
Since the degree of $W_{k}$ is
two, there is no irreducible polynomial of degree exactly three which lies in this space. Thus the optimal exponent in \eqref{eq:trio}
cannot exceed $w_{3,3}(\zeta)$.
On the other hand it follows from the proof of Theorem~\ref{rest} 
that $\underline{\psi}_{3,3}^{\ast}=\underline{\psi}_{3,4}^{\ast}=0$, or equivalently $w_{3,3}(\zeta)=w_{3,4}(\zeta)=3$ by \eqref{eq:umrechnen2}, as already noticed in \eqref{eq:cut}.
Thus indeed \eqref{eq:trio} has only finitely many solutions 
in $Q\in{\mathbb{Z}[T]}$ 
an irreducible polynomial of degree precisely three.
From \eqref{eq:trio} we infer \eqref{eq:genaudrei1} by a standard argument, 
namely if $\alpha$ is a root 
of a polynomial $P$ of degree $n$ and close to some real number
$\zeta$, then
\begin{equation} \label{eq:standard}
\vert P(\zeta)\vert\ll_{\zeta,n}  H(P)\vert\zeta-\alpha\vert.
\end{equation}

Next we show \eqref{eq:trio2} and \eqref{eq:genaudrei2}.
From essentially the vector space argument from the proof of \eqref{eq:trio} we obtain similarly that 
$\widehat{w}_{3,3}(\zeta)+\epsilon$ is 
an upper bound for the exponent in \eqref{eq:trio2} for certain arbitrary large $X$. 
On the other hand, the proof of Theorem~\ref{rest} 
and \eqref{eq:debre} shows
$\widehat{w}_{3,3}(\zeta)=1+2\sigma$, as pointed out in \eqref{eq:whutli}. 
We conclude \eqref{eq:trio2}, and further deduce \eqref{eq:genaudrei2} very similarly
as \eqref{eq:genaudrei1} from \eqref{eq:trio}.

The claims \eqref{eq:otherhand}, \eqref{eq:pere} follow
from \eqref{eq:glm}, essentially using
an argument of Davenport and Schmidt~\cite{davsh}, and variants due to Bugeaud~\cite{bugbuch}.
Indeed~\cite[Lemma~1]{davsh} claims, in our notation, 
that for any real transcendental
$\zeta$, the estimate
\begin{equation} \label{eq:ref}
\vert \zeta-\alpha\vert \leq
H(\alpha)^{-1/\widehat{\lambda}_{n}(\zeta)-1+\epsilon}
\end{equation}
has infintely many solutions in real algebraic integers $\alpha$ of
degree at most $n+1$. For $n=3$,
since we have shown
$\widehat{\lambda}_{3}(\zeta)=1/3$ for Sturmian
continued fractions, the right estimate
of \eqref{eq:otherhand}
follows for algebraic integers of degree at most $4$ in place
of algebraic numbers of degree precisely three.
To make the transition to cubic algebraic numbers, we essentially 
proceed as in~\cite[Theorem~2.11]{bugbuch}, incorporating also the comments
below~\cite[Theorem~2.11]{bugbuch}. As pointed out the method
of~\cite[Theorem~2.11]{bugbuch} with $n=d-1$ leads to a new
proof of~\cite[Theorem~2.9]{bugbuch}, which states
that for a real algebraic number $\zeta$ of 
degree $d$, we have
$\vert \zeta-\alpha\vert\ll_{\zeta} H(\alpha)^{-d}$ for infinitely
many real algebraic numbers $\alpha$ of degree precisely $n=d-1$.
When we let $d=4, n=d-1=3$, it
becomes that
for an algebraic number $\zeta$ of degree four we have
$\vert \zeta-\alpha\vert\ll_{\zeta} H(\alpha)^{-4}$, the estimate in \eqref{eq:otherhand}, for infinitely
many real numbers $\alpha$ of degree precisely three. We have to
adapt the method slightly since we deal with transcendental $\zeta$.
We claim that the same argument 
yields that for any real transcendental $\zeta$ the estimate
$\vert \zeta-\alpha\vert\leq H(\alpha)^{-n-1+\epsilon}$
has infinitely many
real algebraic $\alpha$ of degree precisely $n$, as soon 
as $\widehat{w}_{n}(\zeta)=n$.
The only time where $\zeta$ being algebraic was required
in the proof of~\cite[Theorem~2.11]{bugbuch}
is at the start when the Liouville
inequality~\cite[Theorem~A.1]{bugbuch} is applied. Its
purpose is to guarantee that for certain large $X$ we have
\begin{equation} \label{eq:theend}
\vert P(\zeta)\vert \gg_{d,\zeta} H(P)^{-d+1}\geq X^{-d+1}
\end{equation}
for all non-zero integer polynomials $P$ of degree at 
most $n$ and $H(P)\leq X$. However, for transcendental $\zeta$, by the definition of $\widehat{w}_{n}(\zeta)$ and
$n=d-1$, the estimate \eqref{eq:theend} holds as soon as
$\widehat{w}_{n}(\zeta)=n$, up to the multiplicative constant
replaced by an $\epsilon$ in the exponent. When $n=3$,
note in Theorem~\ref{neu} we have indeed verified
the condition $\widehat{w}_{3}(\zeta)=3$ for Sturmian continued fractions $\zeta$.
From this point on we proceed
as in the proof of~\cite[Theorem~2.11]{bugbuch} to obtain
the right estimate of \eqref{eq:otherhand}.
The left estimate of \eqref{eq:otherhand} 
is further implied by \eqref{eq:standard}.

For \eqref{eq:pere} we kind of dualize the argument above,
in the sense of exchanging best and uniform approximation
accordingly. 
In fact in the proof of~\cite[Lemma~1]{davsh} deals with
the inequality
$\vert \zeta-\alpha\vert \leq H(\alpha)^{-w_{n,n+1}(\zeta)-1+\epsilon}$ instead of \eqref{eq:ref}, 
and the highlighted result 
\eqref{eq:ref} is deduced by Mahler's duality. The proof
can be readily altered to show the 
analogous uniform version, that is the
estimate 
$\vert \zeta-\alpha\vert \leq H(\alpha)^{-1}X^{-\widehat{w}_{n,n+1}(\zeta)+\epsilon}$ has a real algebraic 
integer solution
$\alpha$ of degree precisely $n$ for all large $X$. We apply it to $n=3$ again.
Since $\widehat{w}_{3,4}(\zeta)=\lambda_{3}(\zeta)^{-1}=2\sigma+1$
as shown in Theorem~\ref{neu},
the right claim in \eqref{eq:pere} holds for algebraic integers
$\alpha$ of degree at most $4$. Again the transition
to cubic algebraic numbers 
can be carried out as in~\cite[Theorem~2.11]{bugbuch}, and 
yields the right
estimate in \eqref{eq:pere}. The left is inferred from \eqref{eq:standard} again.
\end{proof}  


We now show the final claim \eqref{eq:wstern} of Theorem~\ref{neu}.
From Theorem~\ref{buglauthm} we see that 
$\widehat{w}_{3}^{\ast}(\zeta)\geq \widehat{w}_{2}^{\ast}(\zeta)=2+\sigma$
for any $\zeta=\zeta_{\varphi}$. We need to show
the reverse inequality. The key idea is to derive
from the proof of Theorem~\ref{rest} that
for parameters $X$ slightly smaller than $H(W_{k+1})$ for large $k$,
the best algebraic approximation $\alpha$ of degree three
or less to $\zeta$ is given
by the root $\alpha_{k}$ of the quadratic polynomial $W_{k}$
closer to $\zeta$.
Then the claim follows from Lemma~\ref{laubug}.
We only sketch some arguments derived from
the proof of Theorem~\ref{rest} to avoid repeating
cumbersome computations.

Let $\epsilon>0$ small.
From Theorem~\ref{rest} and its proof we know that
for large $k$ and certain $Z_{k}$ roughly of size $H(W_{k+1})$,
the inequality 
\begin{equation} \label{eq:doexist}
H(P)\leq Z_{k}, \qquad
\vert P(\zeta)\vert \leq Z_{k}^{-1-\frac{2}{\eta_{k}}-\epsilon}
\end{equation}
has no three linearly independent
solutions in at most cubic polynomials $P$. 
More precisely we certainly have $H(W_{k+1})^{1-\delta}\leq Z_{k}<H(W_{k+1})^{1+\delta}$ for arbitrarily 
small $\delta>0$ and $k\geq k_{0}(\delta)$. This follows
basically from the fact that the essential
local maxima of $\psi_{3,3}^{\ast}(Q)$ (or $L_{3,3}^{\ast}(q)$) are 
attained close to $Q=B_{k}$ (or $q=b_{k}$), see \eqref{eq:hnb}, and
at these points they catch up with the falling slope
$\log H(W_{k+1})-q$ of $L_{W_{k+1}}^{\ast}(q)$. We spare the details.
On the other hand, as noticed in the proof
of Theorem~\ref{sternchen}, the two linearly solutions 
to \eqref{eq:doexist} that do exist
span the same space 
as $\{ W_{k},TW_{k}\}$, which contains only multiples
of $W_{k}$ and thus no irreducible
cubic polynomial. Hence for any irreducible cubic integer polynomial 
of height at most $Z_{k}$ we have the reverse inequality
for $\vert P(\zeta)\vert$. Thus, choosing $\delta$ small enough
compared to $\epsilon$, \eqref{eq:standard} yields that
for large $k$ the estimate
\[
H(\beta)\leq Z_{k}, \qquad
\vert \zeta-\beta\vert \ll_{n,\zeta} H(\beta)^{-1}Z_{k}^{-1-\frac{2}{\eta_{k}}-\epsilon}
\]
has no solution in real cubic algebraic numbers $\beta$, with
the implied constant from \eqref{eq:standard}. 
On the other hand, since $Z_{k}$ is essentially of
size $H(W_{k+1})$ and we can decrease it just a little
to satisfy $Z_{k}<H(W_{k+1})$ if necessary, 
from Lemma~\ref{laubug} we know that for large $k$
\begin{equation} \label{eq:openstan}
H(\beta)\leq Z_{k}, \qquad \vert \zeta-\beta\vert 
\leq H(\beta)^{-1}Z_{k}^{-2-\frac{1}{\eta_{k}}-\epsilon}
\end{equation}
has no solution among real algebraic numbers $\beta$
of degree at most $2$. Combination and $1/\eta_{k}<1$
yields that \eqref{eq:openstan} has no solution in
algebraic numbers of degree at most three. Now
recall by \eqref{eq:sigmarder}, \eqref{eq:eta} and 
Lemma~\ref{laubug} we have
$\limsup_{k\to\infty} \eta_{k}= \sigma^{-1}$.
Thus we find
arbitrarily large values of $k$ for which
$\eta_{k}^{-1}$ is close to $\sigma$, more precisely
\begin{equation} \label{eq:finder}
\eta_{k}^{-1}=\sigma+o(1), \qquad k\to\infty.
\end{equation}
Choosing such a sequence of $k$ as in \eqref{eq:finder},
from \eqref{eq:openstan} and letting $\epsilon\to 0$, we see
that $\widehat{w}_{3}^{\ast}(\zeta)\leq 2+\sigma=\widehat{w}_{2}^{\ast}(\zeta)$. The proof is finished.

We conclude with Theorem~\ref{nvier}. The proof of \eqref{eq:az} will be rather simple 
and only relies on the knowledge of the value of $w_{2}(\zeta_{\varphi})$
from Theorem~\ref{buglauthm} and duality arguments. The refined upper bounds in
\eqref{eq:neddoijesus} and \eqref{eq:neddoije} 
will be derived from the recent result~\cite[Theorem~2.9]{ichsuccessive}.
For $n=2$ in the notation of~\cite{ichsuccessive}, 
its claim becomes
\begin{equation}  \label{eq:abschluss}
\lambda_{n}(\zeta)\leq \max\left\{ \frac{w_{2}(\zeta)}{\widehat{w}_{2}(\zeta)\widehat{w}_{n-1}(\zeta)}, \frac{1}{\widehat{w}_{2}(\zeta)}  \right\},
\qquad n\geq 2,
\end{equation}
for any transcendental real $\zeta$ that 
satisfies $w_{1}(\zeta)<2$.

\begin{proof}[Proof of Theorem~\ref{nvier}]
Since the sequence $(\lambda_{n}(\zeta))_{n\geq 1}$ is non-increasing,
the upper bound in \eqref{eq:az} follows from \eqref{eq:equ} and the hypothesis $n\geq 3$.
The lower bound $1/n$ comes from \eqref{eq:ldiri}. 
We have to prove the remaining lower bound.
From Theorem~\ref{buglauthm} we know that there exist $P$ of degree two and arbitrarily 
large height $H(P)$ such that $\vert P(\zeta)\vert \leq H(P)^{-1-2/\sigma+\epsilon}$. For any such $P$
the $n-1$ polynomials $P_{0}(T)=P, P_{1}(T)=TP(T),\ldots,P_{n-2}(T)=T^{n-2}P(T)$ have degree at most $n$ and share 
the same estimates since obviously $H(P_{0})=H(P_{1})=\cdots =H(P_{n-2})$ and 
$P_{0}(\zeta)\asymp_{\zeta} P_{1}(\zeta)\asymp_{\zeta}\cdots \asymp_{\zeta} P_{n-2}(\zeta)$. Hence
\[
w_{n,n-1}(\zeta)\geq 1+\frac{2}{\sigma}.
\]
With \eqref{eq:umrechnen2} we obtain 
\[
\underline{\psi}_{n,n-1}^{\ast}\leq \frac{(n-1)\sigma-2}{2n(\sigma+1)}.
\]
Hence, by application of \eqref{eq:duales} and \eqref{eq:jaja},
we infer
\[
\underline{\psi}_{n,1} = -\overline{\psi}_{n,n+1}^{\ast} 
\leq \frac{n-1}{2} \underline{\psi}_{n,n-1}^{\ast}\leq 
\frac{(n-1)^{2}\sigma-2(n-1)}{4n(\sigma+1)}.
\]
Inserting in \eqref{eq:umrechnen} we obtain the right expression in the maximum
as a lower bound for $\lambda_{n}$, and thus have established \eqref{eq:az}.

The identity \eqref{eq:gleidohar} is true for $n\in\{1,2\}$
since any $\zeta_{\varphi}$ has bounded partial quotients
and $\lambda_{2}(\zeta_{\varphi})=1$ as well by Theorem~\ref{buglauthm}. For $n\geq 3$,
the identity follows from \eqref{eq:az} if and only if $\sigma=0$. For the estimates \eqref{eq:neddoijesus} and \eqref{eq:neddoije}, recall \eqref{eq:abschluss} applies since
$w_{1}(\zeta_{\varphi})=1<2$. Moreover, if
$\sigma_{\varphi}>0$, then $w_{2}(\zeta)=1+2/\sigma_{\varphi}<\infty$. The left expression bound
in \eqref{eq:neddoijesus} follows by inserting
the corresponding values from Theorem~\ref{buglauthm} and 
$\widehat{w}_{n-1}(\zeta)\geq n-1$, and checking that
it the left expression in \eqref{eq:abschluss} is larger
for $n$ in the given range. 
The bound in \eqref{eq:neddoije} follows similarly
checking that the right expression is the larger one 
for larger $n$. 
The right expression bound in \eqref{eq:neddoijesus}
only reproduces \eqref{eq:az}, derived
from \eqref{eq:equ}. 
Finally, as shown in~\cite[Theorem~2.1]{ichjra}, the property
\eqref{eq:ichneu} holds for any real number
which is not a $U$-number in Mahler's classification
of real numbers,
and it follows from Adamczewski and Bugeaud~\cite{b} that 
any Sturmian continued fraction with $\sigma>0$ is not a $U$-number.
\end{proof}

\vspace{0.5cm}

The author warmly thanks the referee for 
pointing out inaccuracies and giving several other
valuable advices

\end{document}